\pgfplotsset{compat=1.16}
\newcommand{\R}{\mathbb{R}}
\newcommand{\N}{\mathbb{N}}
\newcommand{\norm}[1]{\|#1\|}
\newcommand{\dist}[1]{{\mathrm{dist}}(#1)}
\newcommand{\mv}{\,\mid\,}
\newcommand{\B}{{\cal B}}
\newcommand{\K}{{\cal K}}
\newcommand{\Sp}{{\cal S}}
\newcommand{\F}{{\cal F}}
\newcommand{\skalp}[1]{\langle #1\rangle}
\newcommand{\xb}{\bar x}
\newcommand{\yb}{\bar y}
\newcommand{\zb}{\bar z}
\newcommand{\pb}{\bar p}
\newcommand{\Span}{{\mathrm{sp}\,}}
\newcommand{\cl}{{\mathrm{cl}\,}}
\newcommand{\inn}{{\mathrm{int}\,}}
\newcommand{\gph}{\mathrm{gph}\,}
\newcommand{\dom}{\mathrm{dom}\,}
\newcommand{\tto}{\rightrightarrows}
\tikzset{
	MyPersp/.style={scale=1.8,x={(-0.8cm,-0.4cm)},y={(0.8cm,-0.4cm)},
    z={(0cm,1cm)}},
	MyPoints/.style={fill=white,draw=black,thick}
		}
\title{On inner calmness*, generalized calculus, and derivatives of the normal cone mapping}
\author{Mat\'{u}\v{s} Benko\thanks{
	    Johannes Kepler University Linz,
	    Institute of Computational Mathematics,
        4040 Linz, Austria, and
        University of Vienna,
        Applied Mathematics and Optimization,
        1090 Vienna, Austria, \email{matus.benko@univie.ac.at}}}
\shorttitle{Inner calmness* and calculus}
\begin{document}

\maketitle

\begin{abstract}
 In this paper, we study continuity and Lipschitzian properties of
 set-valued mappings, focusing on inner-type conditions.
 We introduce new notions of inner calmness* and, its relaxation, fuzzy inner calmness*.
 We show that polyhedral maps enjoy inner calmness*
 and examine (fuzzy) inner calmness* of a multiplier mapping associated
 with constraint systems in depth.
 Then we utilize these notions to develop
 some new rules of generalized differential calculus,
 mainly for the primal objects (e.g. tangent cones).
 In particular, we propose an exact chain rule for graphical derivatives.
 We apply these results to compute the derivatives
 of the normal cone mapping, essential e.g.
 for sensitivity analysis of variational inequalities.
 \\[2ex]
	\noindent
	\emph{Keywords:}
		 	inner calmness*, generalized differential calculus, tangents to image sets, chain rule for graphical derivatives, normal cone mapping, polyhedral multifunctions
	\\[2ex]
	\noindent
	\emph{MSC (2010):} 
		49J53, 49J52, 90C31
\end{abstract}


\section{Introduction}


The {\em normal cone mapping}, closely related to variational inequalities,
is a standard tool of variational analysis.
Among other things, it plays an important role in the formulation
of optimality conditions for constrained optimization problems.
Generalized derivatives of the normal cone mapping, in turn,
enable us to study various stability and sensitivity properties of
solutions of such optimization problems.
Moreover, they also offer a pathway to optimality conditions
for difficult problems, where the normal cone mapping
is used to describe the constraints (mathematical programs
with equilibrium constraints, bilevel programs, etc.).
For more information about the normal cone mapping
and related issues we refer to the standard textbooks \cite{DoRo14,Io17,Mo18,RoWe98}.

Recently, a lot of success has been achieved in the computation
of the {\em graphical derivative} of the normal cone mapping
$x \tto N_{\Gamma}(x)$,
where $\Gamma= \{x \mv g(x) \in D\}$ denotes
the feasible region of a constraint system.
Here, $g$ is continuously differentiable and $D$ is a closed convex set.
For the results with polyhedral $D$ see, e.g.,
\cite{BeGfrOut18,ChHi17,GfrOut14a,GfrOut18},
while the papers \cite{GfrMo17,HaMoSa17} extend the scope beyond polyhedrality.
Let us mention that most of these results actually deal with more
challenging cases when the feasible set depends on the parameter
or even when it depends on the parameter as well as on
the solution itself, see Section 5 for more details.

In general, the graphical derivative provides criteria for {\em isolated calmness}
and the {\em Aubin property}, see \cite[Corollary 4E.2 and Theorem 4B.2]{DoRo14}
for the precise statements and more details.
On top of that, it is also used to characterize {\em tilt stability}
of a local minimizer of a function, see \cite[Theorem 3.3]{ChHiNg17},
and, in the constrained case, the essential part
of this criterion is precisely
the graphical derivative of the normal cone mapping.
Interestingly, this criterion seems to be more applicable
than a similar tilt stability criterion based on the (limiting) coderivative;
see e.g. \cite{BeGfrMor18} for the application to second-order
cone programming.
The reason is that the standard calculus provides an upper
estimate for the limiting coderivative of the normal cone mapping,
but the estimate becomes exact only under additional assumptions,
see \cite{GfrOut16a}.
On the other hand, as suggested above, the graphical derivative
can be often fully computed under very mild assumptions.

The aforementioned results regarding the computation
of the graphical derivative
of the normal cone mapping were derived directly from the definition.
The main motivation for this paper is to better understand these computations.
More precisely, we ask the question,
whether there are some underlying calculus principles in play,
which could be identified and brought to light.

Generalized differential calculus belongs among the most fundamental
apparatus of modern variational analysis
\cite{AubinEkeland1984,AubinFrankowska2009,Io17,KlKum02,Mo18,Pen13,RoWe98}.
Arguably, more focus has been placed on the dual constructions
(normal cones, subdifferentials, coderivatives)
and this area is very developed and well-understood.
When it comes to the primal constructions
(tangent cones, subderivatives, graphical derivatives),
an obvious problem appears:
the lack of a suitable chain rule for graphical derivatives.
A closer look shows that the problem boils down to the following issue
regarding tangent cones.
Given a closed set $C$ and a continuously differentiable mapping $\varphi$,
let $\yb \in Q:=\varphi(C)$ and consider the estimate
\begin{equation}\label{eq:MainIssue}
    T_Q(\yb) \subset \bigcup_{(\xb \in C, \varphi(\xb) = \yb)}
    \nabla \varphi(\xb) T_C(\xb).
\end{equation}
It is well-known that the opposite inclusion always holds true
\cite[Theorem 6.43]{RoWe98}.
The question is whether there exists a reasonable (practically applicable)
assumption that guarantees this estimate.
Thus, let us now comment on the standard assumptions accompanying
the calculus rules.

Two main patterns can be observed throughout the most of the calculus formulas.
The first one can be represented by the image rule, where, as above,
the set $Q$ under investigation is generated as the (forward)
image of set $C$ under mapping $\varphi$.
The second one can be represented by the pre-image rule,
where the examined set $C$ is given as a (backward)
pre-image of another given set under a given map ($C:=\varphi^{-1}(Q)$).
We will refer to these patterns as {\em forward} and {\em backward}, respectively.

The main assumption utilized in the backward pattern,
typically called a qualification condition, is known to be
the calmness of the associated perturbation mapping,
often equivalently expressed via the metric subregularity of the feasibility mapping
\cite{HenJouOut02,HenOut05,IofOut08}.
We point out that calmness can be viewed as an
{\em outer} (also called upper) Lipschitzian property of set-valued mappings
since, roughly speaking, it resembles outer semicontinuity
with an additional Lipschitzian requirement regarding the rate of change of the map.
Both calmness and metric subregularity are central notions
of set-valued analysis with applications far beyond calculus.
Over the years, they drew attention of several renown researchers,
see the aforementioned publications as well as
\cite{DoRo04,FabHenKruOut10,GfrKl16,Iof79a,Rob81}
and the references therein.

The less-developed forward pattern seems to be linked with conditions of {\em inner-type}.
Indeed, the corresponding estimates for the dual objects
are known to hold under {\em inner semicompactness} and {\em inner semicontinuity},
see \cite{IofPen96,Mo06a}.
In order to obtain reasonable rules for the primal constructions,
it turns out that one can proceed by strengthening the above
continuity notions of inner-type
to suitable inner (also called lower) Lipschitzian notions,
such as {\em inner calmness}.

Interestingly, the term inner calmness was already coined
in \cite{BeGfrOut18a}, where we proposed calculus rules for
directional limiting normal cones and associated directional constructions.
It is not very surprising, however, since directional limiting normal cone is,
in a way, a primal-dual object:
it consists of limits of (dual) normals along (primal) tangent directions.
Moreover, inner calmness can be found in the literature
also under other names,
such as, e.g., Lipschitz lower semicontinuity \cite{KlKu15}
or recession with linear rate/linear recession \cite{CiFaKru18,Io17}.
In particular, \cite{CiFaKru18} contains a comprehensive
study of this property.
Let us also mention the stronger notion of
Lipschitz lower semicontinuity*,
recently introduced in \cite{CaGiHePa20}
and motivated by a relaxation of the Aubin property
from earlier works of Klatte \cite{Kl87,Kl94}.

This paper revolves around two new notions of {\em inner calmness*}
and, its relaxation, {\em fuzzy inner calmness*}.
Both these conditions are milder
than inner calmness, since the latter is based on inner
semicontinuity, while inner calmness* is based on (milder) inner semicompactness.
All the definitions as well as some basic properties
are provided in the preliminary Section 2.

The main part of the paper is Section 3, which contains
a justification of the new notions.
First, we show that polyhedral set-valued maps enjoy the inner calmness* property.
This provides an inner counterpart to the famous result on the calmness
(the upper Lipschitzness) of polyhedral maps by Robinson \cite{Rob81}.
More importantly, in Theorem \ref{the:Main_Inner_calmness},
we look into a general multiplier mapping associated
with constraint systems
and, under gradually strengthening assumptions, we show
its inner semicompactness, fuzzy inner calmness* and inner calmness*.

In Section 4, we easily derive several calculus rules of the forward pattern.
First, we prove the estimate \eqref{eq:MainIssue} under fuzzy inner calmness*
and then apply it to obtain a chain rule for graphical derivatives in exact form.
We also provide some useful corollaries of the chain rule,
such as the product rule in Theorem \ref{The : ProdRule}.

Finally, in Section 5, we return to the normal cone mapping
associated with constraint systems and apply our results to
compute or estimate its generalized derivatives.
In particular, the computation of the graphical derivative
is just a simple corollary of Theorems \ref{the:Main_Inner_calmness}
and \ref{The : ProdRule}.
For the most challenging setting,
where the feasible region $\Gamma(p,x)=\{z \in \mv g(p,x,z) \in D\}$
depends on the parameter $p$ as well as on the
decision variable $x$
($x$ typically corresponds to the solution of an underlying
parametrized variational inequality),
we recover the state of the art results from \cite{BeGfrOut18}.
This suggests that our calculus based on
(fuzzy) inner calmness* is applicable in quite relevant situations.
Using the analogous calculus based on inner calmness is also possible
if one significantly strengthens the assumptions imposed on the constraints
(nevertheless, one still recovers the recent results from \cite{GfrOut18}).
Using the standard result from \cite[Theorem 6.43]{RoWe98}, however,
seems completely out of question, since it requires convexity of the graph
of the aforementioned multiplier mapping.

As a specific application, we discuss the {\em semismoothness*} of the normal cone mapping,
based on the estimates of its directional limiting coderivative.
Semismoothness* is an extension of the standard semismoothness property
to set-valued mappings. It was recently introduced in
\cite{GfrOut19} and used to design a novel Newton method for generalized equations.
The notion was then further explored e.g. in \cite{FrGoHo21,KhMoVo20}.

The following notation is employed. Given a set $A \subset \R^{n}$,
the closure and interior of $A$ are denoted, respectively,
by $\cl A$ and $\inn A$,
$\Span A$ stands for the linear hull of $A$ and $A^{\circ}$
is the (negative) polar cone of $A$.
A set is called cone if any nonnegative multiple of its element
also belongs to the set.
If $A=\{a\}$, we use just $[a]$ instead of $\Span \{a\}$
and $[a]^{\perp}$ stands for its orthogonal complement.
We denote by
$\dist{\cdot,A}:=\inf_{y\in A}\norm{\cdot-y}$
the usual point to set distance with the convention
$\dist{\cdot,\emptyset}=\infty$ and
$\skalp{\cdot,\cdot}$ stands for the standard scalar product.
Further, $\B$, $\Sp$ stand respectively for the closed unit ball
and the unit sphere of the space in question.
Given a (sufficiently) smooth function $f:\R^n\to\R$,
denote its gradient and Hessian at $x$ by $\nabla f(x)$ and $\nabla^2 f(x)$, respectively.
Considering further a vector function $\varphi :\R^n\to\R^s$ with $s>1$,
denote by $\nabla \varphi(x)$ the Jacobian of $\varphi$ at $x$,
i.e., the mapping $x \to \nabla \varphi(x)$ goes from
$\R^n$ into the space of $s\times n$ matrices, denoted by $(\R^s)^n$.
Moreover, for a mapping $\beta: \R^n \to (\R^s)^m$ and a
vector $y \in \R^s$, we introduce
the scalarized map $\skalp{y,\beta}:\R^n\to \R^m$ given by
$\skalp{y,\beta}(x)=\beta(x)^T y$.
Following traditional patterns, given a number $\alpha \geq 0$
we denote by $o(t^{\alpha})$ for $t \geq 0$ a term with the
property that $o(t^{\alpha})/t^{\alpha} \to 0$ when $t \downarrow 0$.
Particularly, $o(1)$ stands for a term which converges to $0$ as
the variable in question goes to $0$.
Finally, if no confusion arises, we denote a sequence $(x_k)_{k=1}^{\infty} \subset \R^n$ simply by $(x_k)$ and very often we also drop the brackets and write just $x_k$.


\section{Preliminaries}


We begin by recalling several definitions and results from variational analysis.
Let $\Omega\subset\R^n$ be an arbitrary closed set and $\xb\in\Omega$.
The {\em tangent} (also called  {\em Bouligand} or {\em contingent}) {\em cone} to $\Omega$ at $\xb$ is given by
\[T_{\Omega}(\xb):=\{u\in \R^n\mv \exists (t_k)\downarrow 0, (u_k)\to u: \xb+t_ku_k\in\Omega \ \forall \, k\}.\]
We denote by
\begin{equation*}
\widehat N_{\Omega}(\xb):=T_{\Omega}(\xb)^\circ
=\{x^* \in \R^n \mv \skalp{x^*,u} \leq 0 \ \forall \, u \in T_{\Omega}(\xb)\}
\end{equation*}
the {\em regular} ({\em Fr\'echet}) {\em normal cone} to $\Omega$ at $\xb$.
The {\em limiting} ({\em Mordukhovich}) {\em normal cone} to $\Omega$ at $\xb$
is defined by
\[N_{\Omega}(\xb):=\{x^* \in\R^n \mv \exists (x_k)\to \xb,\ (x_k^*)\to x^*: x_k^*\in \widehat N_{\Omega}(x_k) \ \forall \, k\}.\]
Finally, given a direction $u\in \R^n$, we denote by
\[N_\Omega(\xb;u):=\{x^* \in\R^n \mv \exists (t_k)\downarrow 0, (u_k)\to u, (x_k^*)\to x^*: x_k^* \in \widehat N_{\Omega}(\xb+t_ku_k) \ \forall \, k\}\]
the {\em directional limiting normal cone} to $\Omega$ at $\xb$
{\em in direction} $u$.

If $\xb \notin \Omega$, we put $T_{\Omega}(\xb)=\emptyset$, $\widehat N_{\Omega}(\xb)=\emptyset$, $N_{\Omega}(\xb)=\emptyset$ and $N_\Omega(\xb;u)=\emptyset$. Further note that
$N_\Omega(\xb;u)=\emptyset$ whenever $u\not\in T_\Omega(\xb)$.
If $\Omega$ is convex, then $\widehat N_\Omega(\xb)= N_\Omega(\xb)$ amounts to the classical normal cone in the sense of convex analysis and we will write $N_\Omega(\xb)$. More generally, we say that $\Omega$ is
{\em Clarke regular} at point $\xb$, provided $\widehat N_\Omega(\xb)= N_\Omega(\xb)$ and we write $N_\Omega(\xb)$ in such case.

The following generalized derivatives of set-valued mappings are defined by means of tangents and normals to the graph of the mapping.
Let $M:\R^n\tto\R^m$ be a set-valued map with closed graph
$\gph M := \{(x,y) \mv y \in M(x)\}$ and let $(\xb,\yb)\in\gph M$.
The mapping $D M(\xb,\yb):\R^n\tto\R^m$, defined by
\[
DM(\xb,\yb)(u):= \{v\in\R^m\mv (u,v)\in T_{\gph M}(\xb,\yb)\},
\]
is called the {\em graphical derivative} of $M$ at $(\xb,\yb)$.
 The mapping $\widehat D^\ast M(\xb,\yb):\R^m\tto\R^n$
 \[\widehat D^\ast M(\xb,\yb)(v^\ast):=\{u^\ast\in \R^n \mv (u^\ast,- v^\ast)\in \widehat N_{\gph M}(\xb,\yb )\}\]
is called the {\em regular (Fr\'echet) coderivative} of $M$ at $(\xb,\yb)$.
 The mapping $D^\ast M(\xb,\yb): \R^m\tto\R^n$
\[ D^\ast M(\xb,\yb)(v^\ast):=\{u^\ast\in \R^n \mv (u^\ast,- v^\ast)\in N_{\gph M}(\xb,\yb )\}\]
is called the {\em limiting (Mordukhovich) coderivative} of $M$ at $(\xb,\yb)$.
 Given a pair of directions $(u,v)\in\R^n\times\R^m$, the
 mapping $D^\ast M((\xb,\yb ); (u,v)):\R^m\tto\R^n$, given by
\begin{equation*}
D^\ast  M((\xb,\yb ); (u,v))(v^\ast):=\{u^\ast \in \R^n \mv (u^\ast,-v^\ast)\in N_{\gph M}((\xb,\yb ); (u,v)) \},
\end{equation*}
is called the {\em directional limiting coderivative} of $M$ at $(\xb,\yb)$ {\em in direction} $(u,v)$.

In the rest of the section, we recall some well-known continuity
and Lipschitzian notions for set-valued maps and introduce some new ones.
As advertised in Introduction, we will mainly focus
on the inner-type properties.
In order to define also directional versions of these notions,
we employ the following terminology.
Given $x, u \in \R^n$, we say that a sequence $x_k$ converges to $x$ {\em from direction} $u$ if there exist
$t_k \downarrow 0$ and $u_k\to u$ with $x_k=x+t_ku_k$.

Recall that $S: \R^m \rightrightarrows \R^n$ is {\em inner semicompact at $\yb$ with respect to (wrt)} $\Omega \subset \R^m$ if for every sequence $y_k \to \yb$ with $y_k \in \Omega$
there exists a subsequence $K$ of $\N$ and a sequence $(x_k)_{k\in K}$ with $x_k \in S(y_k)$ for $k \in K$
converging to some $\xb$.
Given $\xb \in S(\yb)$, we say that $S$ is {\em inner semicontinuous at $(\yb,\xb)$ wrt} $\Omega$
if for every sequence $y_k \to \yb$ with $y_k \in \Omega$ there exists
a sequence $x_k \to \xb$ with $x_k \in S(y_k)$ for
sufficiently large $k$.
If $\Omega=\R^m$, we speak only about inner semicompactness at $\yb$ and inner semicontinuity at $(\yb,\xb)$.
If we restrict ourselves to sequences $y_k$ converging to $\yb$
from a fixed direction $v \in \R^m$, we speak of
inner semicompactness and inner semicontinuity
{\em in direction} $v$.
For more details regarding these standard notions we refer to \cite{Mo06a}.

We point out that inner semicompactness is implied by the
simpler, more intuitive, local boundedness condition \cite[Definition 5.14]{RoWe98}. In fact, local boundedness is often imposed in the development
of calculus instead of inner semicompactness,
see \cite{RoWe98}.
We believe, however, that the boundedness assumption
is slightly misleading, since, as the name suggests,
its purpose is to {\em restrict} the mapping,
while inner semicompactness, in a sense, says the opposite
- be as unbounded as you like, I just need a convergent subsequence.
In Section 3, we will see the impact of these differences
when dealing with a multiplier map.

For the purposes of this paper, it suffices
to say that $S$ is {\em outer semicontinuous} (osc)
if $\gph S$ is closed, see \cite[Theorem 5.7]{RoWe98}.
Hence, when dealing with derivatives of set-valued maps,
given as normals or tangents to their graphs,
we will use osc as a standing assumption,
since we want the graphs to be closed.

In Section 4, we will be interested in tangents and normals to sets,
which are actually domains of certain maps.
The following lemma shows that inner semicompactness
of an osc map guarantees local closedness of its domain.
Clearly, local closedness, defined below, is sufficient for our needs.
See \cite[Theorem 5.25 (b)]{RoWe98} for a similar
result based on local boundedness.

\begin{lemma}\label{Lem : iscomp_dom_closed}
Let $S: \R^m \rightrightarrows \R^n$ be osc
and inner semicompact at $\yb$ wrt
$\dom S := \{y \in \R^m \mv S(y) \neq \emptyset\}$.
Then, $\dom S$ is locally closed around $\yb$,
i.e., $\dom S \cap V$ is closed for some
closed neighbourhood of $\yb$.
\end{lemma}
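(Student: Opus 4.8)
The plan is to argue by contradiction. Suppose $\dom S$ is not locally closed around $\yb$. Then, for every $k\in\N$, the set $\dom S\cap(\yb+(1/k)\B)$ fails to be closed, for otherwise the closed neighbourhood $\yb+(1/k)\B$ would witness local closedness. Hence for each $k$ there is a point $p_k$ lying in the closure of $\dom S\cap(\yb+(1/k)\B)$ but not in $\dom S$; since $\yb+(1/k)\B$ is closed, necessarily $p_k\in\yb+(1/k)\B$, so that $p_k\in\cl(\dom S)\setminus\dom S$ and $p_k\to\yb$.

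The crucial step is to observe that, thanks to outer semicontinuity, $\dist{0,S(w)}\to\infty$ as $w\to p_k$ along $\dom S$. Indeed, if along some sequence $w_j\to p_k$ with $w_j\in\dom S$ the values $\dist{0,S(w_j)}$ stayed bounded, we could select $x_j\in S(w_j)$ with $\norm{x_j}\leq\dist{0,S(w_j)}+1$; these would be bounded, a convergent subsequence $x_j\to x^\ast$ would give $(p_k,x^\ast)\in\gph S$ by closedness of $\gph S$, and hence $p_k\in\dom S$, a contradiction. Using $p_k\in\cl(\dom S)$, we may therefore choose, for each $k$, a point $w_k\in\dom S$ with $\norm{w_k-p_k}<1/k$ and $\dist{0,S(w_k)}\geq k$. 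Then $w_k\in\dom S$ and $w_k\to\yb$.

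Finally, feeding $(w_k)$ into the inner semicompactness of $S$ at $\yb$ wrt $\dom S$ yields an infinite index set $K\subset\N$ and selections $x_k\in S(w_k)$, $k\in K$, with $x_k\to\xb$ for some $\xb$. But $\norm{x_k}\geq\dist{0,S(w_k)}\geq k$ for $k\in K$, which contradicts the boundedness of the convergent sequence $(x_k)_{k\in K}$. This contradiction establishes the claim.

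The one point I expect to require care is the middle step: recognising that a limit point of $\dom S$ which is missing from $\dom S$ forces the values of $S$ to escape to infinity nearby, which is precisely what obstructs inner semicompactness. An essentially equivalent route would be to first use inner semicompactness to produce $R>0$ and a neighbourhood $U$ of $\yb$ with $S(y)\cap R\B\neq\emptyset$ for every $y\in\dom S\cap U$, and then apply the classical locally-bounded argument, in the spirit of \cite[Theorem 5.25(b)]{RoWe98}, to the truncated map $y\mapsto S(y)\cap R\B$, whose graph $\gph S\cap(\R^m\times R\B)$ is closed and whose domain coincides with $\dom S$ on $U$.
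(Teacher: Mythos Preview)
Your proof is correct and follows essentially the same route as the paper: both argue by contradiction, produce points $p_k\to\yb$ in $\cl(\dom S)\setminus\dom S$, use outer semicontinuity to force $\dist{0,S(\cdot)}$ to blow up near each $p_k$, and then extract a diagonal sequence $w_k\in\dom S$ with $w_k\to\yb$ and $\dist{0,S(w_k)}\geq k$, which contradicts inner semicompactness. The only differences are presentational---you make the blowup step explicit as a separate observation and append an alternative argument via truncation in the spirit of \cite[Theorem 5.25(b)]{RoWe98}, neither of which appears in the paper.
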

\begin{proof}
 By contraposition, assume that for every $k$
 the set $\dom S \cap (\yb + (1/k)\B)$ is not closed,
 i.e., there exists $y_k \notin \dom S$
 with $\norm{y_k - \yb} \leq 1/k$, together with a sequence
 $(y_k^l)_{l = 1}^{\infty} \subset \dom S$
 such that $y_k^l \to y_k$ as $l \to \infty$
 and $\norm{y_k^l - \yb} \leq 1/k$ for all $l \in \N$.
 
 Now if there exists $k_0$ such that for all $l$
 there exists some $x_{k_0}^l \in S(y_{k_0}^{l})$
 with $\norm{x_{k_0}^l} \leq k_0$, then
 $x_{k_0}^l$ must converge to some $x_{k_0}$ along a subsequence. Since $(y_{k_0}^l,x_{k_0}^l) \in \gph S$
 and $\gph S$ is closed, we infer $(y_{k_0},x_{k_0}) \in \gph S$, which contradicts $y_{k_0} \notin \dom S$.
 This means that for each $k$ there exists $l_k$ such that
 $x_k^{l_k} \in S(y_k^{l_k})$ implies $\norm{x_k^{l_k}} > k$.
 Then, however, we get $\dom S \ni y_k^{l_k} \to \yb$
 as $k \to \infty$ and the inner semicompactness of $S$
 yields the existence of $x_k^{l_k} \in S(y_k^{l_k})$
 converging to some $\xb$ along a subsequence,
 contradicting $\norm{x_k^{l_k}} > k$.
 This completes the proof.
\end{proof}

In \cite{BeGfrOut18a}, we needed to strengthen the
above continuity properties by controlling
the rate of convergence $x_k \to \xb$.
To this end, we came up with {\em inner calmness}
based on inner semicontinuity.
Here we also introduce the following milder concept
of inner calmness* based on inner semicompactness.

\begin{definition}
 A set-valued mapping $S: \R^m \rightrightarrows \R^n$ is called
 \begin{itemize}
     \item[(i)] {\em inner calm* at $\yb \in \R^m$ wrt} $\Omega \subset \R^m$
  if there exists $\kappa > 0$ such that for every sequence $y_k \to \yb$ with $y_k \in \Omega$, there exist a subsequence $K$ of $\N$, together with a sequence $(x_k)_{k\in K}$ and
  $\xb \in \R^n$ with $x_k \in S(y_k)$ for $k \in K$ and
  \begin{equation}\label{eq:ICdef}
   \norm{x_k - \xb} \leq \kappa \norm{y_k - \yb};
  \end{equation}
     \item[(ii)] {\em inner calm at $(\yb,\xb) \in \gph S$ wrt} $\Omega$
  if there exist $\kappa > 0$ such that for every
  sequence $y_k \to \yb$ with $y_k \in \Omega$
  there exists a sequence $x_k$ satisfying
  $x_k \in S(y_k)$ and \eqref{eq:ICdef} for sufficiently large $k$.
 \end{itemize}
As before, if we restrict ourselves to sequences $y_k$ converging to $\yb$
from a fixed direction $v \in \R^m$, we speak of
inner calmness* and inner calmness
{\em in direction} $v$.
\end{definition}

\noindent
Clearly, we have the implications:
\[
  \begin{xy}\xymatrix@C=110pt@R=30pt@!0{
   & \text{inner calmness*} \ar@{=>}[dr] & \\
   \text{inner calmness} \ar@{=>}[dr]\ar@{=>}[ur] & & 
   \text{inner semicompactness} \\
   & \text{inner semicontinuity}\ar@{=>}[ur] &
}
\end{xy}
\]

Note that each of the four inner conditions implies that $S(y) \neq \emptyset$ for $y \in \Omega$ near $\yb$. While this can be desirable in some situations,
it can also be quite restrictive.
For our purposes, however, we will often consider these properties wrt to the domain of $S$,
adding no restriction at all.

We also consider one important outer Lipschitzian notion.
We say that $S$ is {\em calm} at $(\yb,\xb) \in \gph S$,
provided there exists $\kappa > 0$ such that for every sequence $x_k \to \xb$
for which there exists a sequence $y_k$ with $x_k \in S(y_k)$,
there exists a sequence $\tilde x_k$ satisfying $\tilde x_k \in S(\yb)$ and
\begin{equation} \label{eq : CalmnessDef_seq}
\norm{x_k - \tilde x_k} \leq \kappa \norm{y_k - \yb}
\end{equation}
for sufficiently large $k$.
Interestingly, in the definition of calmness, the crucial
sequence is $x_k$ in the image space $\R^n$.
By the same token, $S$ is called calm {\em in direction}
$u \in \R^n$ if the above holds for all sequences $x_k$
converging to $\xb$ from $u$.
This is also related to the well-known fact that calmness of $S$ at $(\yb,\xb)$
is equivalent to {\em metric subregularity} of $M:=S^{-1}$ at $(\xb,\yb)$.

Finally, we employ the following terminology.
We say that $S$ is calm at $(\yb,\xb)$ with {\em constant}
$\kappa \geq 0$ if $\kappa$ satisfies \eqref{eq : CalmnessDef_seq}.
The infimum of all calmness constants is called
the {\em calmness modulus} of $S$,
which is set to be $+\infty$ if $S$ fails to be calm.
Naturally, the same applies to inner calmness* and
inner calmness as well as to the directional versions
of these properties.

\subsection{Elementary results and other notions}

Calmness and inner calmness are typically defined
via neighbourhoods instead of sequences.
First, we show that these definitions coincide.

\begin{lemma}
 Let $S: \R^m \rightrightarrows \R^n$ and $(\yb,\xb) \in \gph S$.
 Then
 \begin{itemize}
     \item[(i)] $S$ is inner calm at $(\yb,\xb)$ wrt $\Omega$
     with modulus $\bar\kappa$ if and only if
     $\bar\kappa$ is the infimum of $\kappa$ over all
     combinations of $\kappa$
     and neighbourhoods $V$ of $\yb$ satisfying
     \begin{equation} \label{eq : InnerCalmnessDef}
     \xb \in S(y) + \kappa \norm{y - \yb}\B \quad \forall \, y \in V \cap \Omega;
     \end{equation}
     \item[(ii)] $S$ is calm at $(\yb,\xb)$
     with modulus $\bar\kappa$ if and only if
     $\bar\kappa$ is the infimum of $\kappa$ over all
     combinations of $\kappa$
     and neighbourhoods $U$ of $\xb$ satisfying
     \begin{equation} \label{eq : CalmnessDef}
      S(y) \cap U \subset S(\yb) + \kappa \norm{y - \yb}\B \quad 
      \forall \, y \in \R^m.
     \end{equation}
 \end{itemize}
\end{lemma}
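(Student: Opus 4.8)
The plan is to show, separately for each item, that the \emph{set} of constants admissible in the sequential definition coincides with the set of those $\kappa \geq 0$ for which the displayed neighbourhood inclusion holds; equality of the moduli then follows immediately by taking infima (with the convention $\inf \emptyset = +\infty$), and in particular $S$ enjoys the property in one formulation exactly when it does in the other.

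For (i), I would fix $(\yb,\xb) \in \gph S$, let $A$ be the set of $\kappa \geq 0$ satisfying the sequential condition in the definition of inner calmness wrt $\Omega$, and let $B$ be the set of $\kappa \geq 0$ for which some neighbourhood $V$ of $\yb$ fulfils \eqref{eq : InnerCalmnessDef}. The inclusion $B \subseteq A$ is direct: if $\kappa \in B$ with witness $V$, then any sequence $y_k \to \yb$ with $y_k \in \Omega$ eventually enters $V$, and \eqref{eq : InnerCalmnessDef} then yields $x_k \in S(y_k)$ with $\norm{x_k - \xb} \leq \kappa \norm{y_k - \yb}$ for sufficiently large $k$, i.e.\ \eqref{eq:ICdef}. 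For $A \subseteq B$ I would argue by contraposition: if $\kappa \notin B$, then for each $k$ the neighbourhood $\yb + (1/k)\B$ violates \eqref{eq : InnerCalmnessDef}, so there is $y_k \in (\yb + (1/k)\B) \cap \Omega$ with $\xb \notin S(y_k) + \kappa \norm{y_k - \yb}\B$ (note $y_k \neq \yb$, since $\xb \in S(\yb)$). The sequence $y_k \to \yb$, $y_k \in \Omega$, then admits no selection $x_k \in S(y_k)$ obeying \eqref{eq:ICdef} for sufficiently large $k$, whence $\kappa \notin A$. Thus $A = B$, and the moduli $\inf A$ and $\inf B$ agree.

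For (ii) I would repeat the scheme, now with the decisive sequence living in the image space, in accordance with the remark preceding the lemma. Let $A'$ be the set of sequential calmness constants and $B'$ the set of $\kappa \geq 0$ for which some neighbourhood $U$ of $\xb$ satisfies \eqref{eq : CalmnessDef}. If $\kappa \in B'$ with witness $U$ and $x_k \to \xb$ with $x_k \in S(y_k)$ for some $y_k$, then eventually $x_k \in S(y_k) \cap U \subseteq S(\yb) + \kappa \norm{y_k - \yb}\B$, which furnishes $\tilde x_k \in S(\yb)$ with \eqref{eq : CalmnessDef_seq}; hence $B' \subseteq A'$. Conversely, if $\kappa \notin B'$, then for each $k$ there are $y_k \in \R^m$ and $x_k \in S(y_k) \cap (\xb + (1/k)\B)$ with $x_k \notin S(\yb) + \kappa \norm{y_k - \yb}\B$; along this sequence $x_k \to \xb$ and $x_k \in S(y_k)$, yet no $\tilde x_k \in S(\yb)$ satisfies \eqref{eq : CalmnessDef_seq}, so $\kappa \notin A'$. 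Therefore $A' = B'$ and $\inf A' = \inf B'$.

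The whole argument is essentially bookkeeping; the only points needing care are the automatic exclusion of the degenerate indices $y_k = \yb$ (forced by $\xb \in S(\yb)$, which keeps the relevant right-hand sides nonempty) and the deliberate choice to compare the full sets of constants rather than merely the two properties --- this is precisely what upgrades the conclusion from a qualitative equivalence to the stated identity of \emph{moduli}. Nothing beyond the definitions recalled above is needed.
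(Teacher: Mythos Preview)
Your proposal is correct and follows essentially the same approach as the paper: both arguments show that the set of constants admissible in the sequential definition coincides with the set admissible in the neighbourhood formulation (the nontrivial inclusion handled by contraposition, producing a violating sequence from nested balls), and then conclude equality of the infima. Your write-up is simply more explicit about the bookkeeping, including the observation that $y_k\neq\yb$ and the parallel treatment of part~(ii), which the paper dismisses as ``analogous.''
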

\begin{proof}
 Clearly, if $\kappa$ satisfies \eqref{eq : InnerCalmnessDef}
 for some neighbourhood $V$ of $\yb$, then it also satisfies
 \eqref{eq:ICdef} from the definition of inner calmness.
 On the other hand, if $\kappa$ satisfies \eqref{eq:ICdef},
 then there exists a neighbourhood $V$ of $\yb$ such that
 \eqref{eq : InnerCalmnessDef} holds.
 Indeed, if not, we find a sequence $\Omega \ni y_k \to \yb$ with
 \[
   \dist{\xb,S(y_k)} > \kappa \norm{y_k - \yb},
 \]
 violating \eqref{eq:ICdef}. Hence, the infimum defining
 modulus equals the infimum from the statement.
 
 The proof for calmness is analogous.
\end{proof}
\noindent
Note that calmness is sometimes defined by
\eqref{eq : CalmnessDef} with $\R^m$ replaced
by a neighbourhood of $\yb$.
As \cite[Exercise 3H.4]{DoRo14} clarifies,
these definitions are in fact equivalent.
Clearly, calmness and inner calmness are implied by the so-called
Aubin property \cite[Definition 9.36]{RoWe98}.

For the sake of completeness, we write down the
neighbourhood-based definition of metric subregularity.
\begin{definition}\label{Def: MS}
  Let $M: \R^n \rightrightarrows \R^m$ and $(\xb,\yb) \in \gph M$. We say that
  $M$ is {\em metrically subregular} at $(\xb,\yb)$ provided there exist $\kappa > 0$ and
  a neighbourhood $U$ of $\xb$ such that
  \[\dist{x,M^{-1}(\yb)} \leq \kappa \dist{\yb,M(x)} \quad \forall \, x \in U.\]
\end{definition}

Next we look into the relation between the standard and the directional
versions of the various calmness properties.
We note that our interest in the directional approach
stems from its successful implementation
and development in recent years by Gfrerer \cite{Gfr13a}.

\begin{lemma}\label{Lem:DirVsStand}
 Given $S: \R^m \rightrightarrows \R^n$, $\yb \in \R^m$
 and $\Omega \subset \R^m$, let $\bar\kappa$ denote
 the modulus of inner calmness* of $S$ at $\yb$ wrt $\Omega$
 (inner calmness of $S$ at $(\yb,\xb) \in \gph S$ wrt $\Omega$\,,
 calmness of $S$ at $(\yb,\xb)$) and let $\bar\kappa_v$
 denote the corresponding modulus in direction $v \in \R^m$
 (for calmness $\bar\kappa_u$ with $u \in \R^n$).
 Then
 \begin{equation}\label{eq:StandVSdirMod}
     \bar\kappa = \max_{v \in \Sp}\bar\kappa_v \quad
     (\bar\kappa = \max_{u \in \Sp}\bar\kappa_u \ \textrm{ for calmness}).
 \end{equation}
\end{lemma}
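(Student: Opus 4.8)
The plan is to handle the three notions uniformly, establishing the two inequalities separately; the real content is the second one together with attainment of the maximum. The inequality $\bar\kappa\ge\bar\kappa_v$ for every $v\in\Sp$ (and $\bar\kappa\ge\bar\kappa_u$ for calmness, with $u\in\Sp\subset\R^n$) is immediate, since every test sequence admissible for the directional notion in a fixed direction is also admissible for the standard notion; hence $\bar\kappa\ge\sup_{v\in\Sp}\bar\kappa_v$. So everything reduces to producing a single $\bar v\in\Sp$ with $\bar\kappa_{\bar v}\ge\bar\kappa$: then $\bar\kappa_{\bar v}=\bar\kappa$ by the first inequality, and $\bar\kappa_v\le\bar\kappa=\bar\kappa_{\bar v}$ for all $v$, so the supremum is attained and equals $\bar\kappa$. (I tacitly assume the nondegenerate setting in which $\yb\in\Omega$ implies $\yb\in\dom S$ — automatic when $\Omega=\dom S$ — and use closedness of the images $S(y)$.)

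First I would record what it means for a constant $\kappa$ to fail the directional property. Negating the sequential definitions and replacing the adversary's selection by a nearest point, one finds: $\kappa$ fails calmness of $S$ at $(\yb,\xb)$ in direction $u$ iff there are $x_k\to\xb$ from direction $u$ and $y_k$ with $x_k\in S(y_k)$ and $\dist{x_k,S(\yb)}>\kappa\norm{y_k-\yb}$ for every $k$; $\kappa$ fails inner calmness of $S$ at $(\yb,\xb)$ in direction $v$ wrt $\Omega$ iff there is $\Omega\ni y_k\to\yb$ from direction $v$ with $\dist{\xb,S(y_k)}>\kappa\norm{y_k-\yb}$ for every $k$; and $\kappa$ fails inner calmness* of $S$ at $\yb$ in direction $v$ wrt $\Omega$ iff there is $\Omega\ni y_k\to\yb$ from direction $v$ such that for every $\hat x\in\R^n$ one has $\dist{\hat x,S(y_k)}>\kappa\norm{y_k-\yb}$ for all sufficiently large $k$. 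In the first two cases the violating inequality holds termwise, while in the third the threshold index depends on $\hat x$ — this is the source of the difficulty below. The non-directional descriptions are identical with the phrase ``from direction $u/v$'' removed.

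Next I would take $c_m\uparrow\bar\kappa$ (or $c_m\to\infty$ if $\bar\kappa=\infty$). Each $c_m$ fails the standard property, so by the non-directional description there is a violating sequence; normalizing the relevant difference vectors ($x_k-\xb$ for calmness, $y_k-\yb$ otherwise) and passing to a subsequence so that they converge to a unit vector $v_m$ (resp.\ $u_m$), the subsequence then violates the directional property at $c_m$ in that direction, so $\bar\kappa_{v_m}\ge c_m$. By compactness of $\Sp$ pass to a subsequence with $v_m\to\bar v$. It remains to show $\bar\kappa_{\bar v}\ge\bar\kappa$, and since $\bar\kappa_{v_m}\ge c_m\to\bar\kappa$ it suffices to prove that $v\mapsto\bar\kappa_v$ is upper semicontinuous on $\Sp$. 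For calmness and inner calmness this is a routine diagonal argument from the termwise description: given $v_m\to\bar v$ with $\bar\kappa_{v_m}>c$, for each $m$ take a single term of a sequence violating the property at a constant slightly below $c$ in direction $v_m$, chosen close enough to the base point and with normalized difference vector close enough to $v_m$ that the diagonal sequence so obtained converges from direction $\bar v$; each of its terms still satisfies the strict termwise inequality, so that sequence witnesses $\bar\kappa_{\bar v}\ge c$, and letting $c\uparrow\limsup_m\bar\kappa_{v_m}$ gives $\bar\kappa_{\bar v}\ge\limsup_m\bar\kappa_{v_m}\ge\bar\kappa$.

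The hard part is the inner calmness* case of this last step, where failure is not termwise: a single term $y^{(m)}_{k_m}$ of a sequence violating the property for $v_m$ need not satisfy $\dist{\hat x,S(y^{(m)}_{k_m})}>\kappa\norm{y^{(m)}_{k_m}-\yb}$ for all $\hat x$, because for the $m$-th sequence this holds only for indices past a bound depending on $\hat x$, and that bound need not be uniform even for $\hat x$ ranging over a fixed compact set. Hence the naive one-term-per-sequence diagonal need not inherit the failure, and the diagonalization must be organized more carefully: one passes each violating sequence to a rapidly decaying subsequence and interleaves suitably short finite blocks so that the combined sequence still tends to $\yb$ from direction $\bar v$, and then shows that, for each fixed $\hat x$, only finitely many interleaved terms can violate a slightly relaxed bound — i.e.\ recovers the needed uniformity. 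Equivalently, one may try to exhibit directly a single sequence $\Omega\ni y_k\to\yb$ from some direction with $\liminf_k\dist{\hat x,S(y_k)}/\norm{y_k-\yb}\ge\bar\kappa$ for every $\hat x\in\R^n$, whose direction is then the desired $\bar v$. This bookkeeping is the crux; everything else is routine.
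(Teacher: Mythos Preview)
Your approach is more elaborate than the paper's and, in the inner calmness* case, creates a difficulty that the paper's argument simply sidesteps. The paper proceeds directly: fix any $\kappa<\bar\kappa$, take a single sequence $(y_k)\subset\Omega$ violating the standard property at level $\kappa$, and pass to a subsequence whose normalized increments $(y_k-\yb)/\norm{y_k-\yb}$ converge to some $v\in\Sp$. The point you essentially have but do not exploit is that this subsequence \emph{still} violates the directional property at level $\kappa$ --- even for inner calmness*, because the failure condition ``for every $\hat x$ there is $k_0(\hat x)$ such that $\dist{\hat x,S(y_k)}>\kappa\norm{y_k-\yb}$ for $k\ge k_0(\hat x)$'' is inherited by every subsequence. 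Hence $\bar\kappa_v\ge\kappa$, and letting $\kappa\uparrow\bar\kappa$ gives $\sup_{v\in\Sp}\bar\kappa_v\ge\bar\kappa$. No upper semicontinuity, no interleaving of blocks, and the argument is uniform across all three notions.

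Your route through upper semicontinuity of $v\mapsto\bar\kappa_v$ is aimed specifically at showing that the supremum is attained. This is a legitimate goal, and your diagnosis of why the naive one-term-per-sequence diagonal fails for inner calmness* is accurate: the failure is not termwise, so a single extracted term carries no information. However, you stop at describing the shape of the needed bookkeeping without carrying it out, so the proposal is incomplete precisely at the step you label ``the crux''. It is worth noting that the paper's proof is also loose on attainment --- the sentence ``Thus $\bar\kappa\le\bar\kappa_v$'' does not follow, since the direction $v$ produced depends on $\kappa$. So on the max-versus-sup issue neither argument is fully rigorous; the difference is that you have promoted this gap to the central obstacle by choosing the usc route, whereas the paper's direct argument already delivers the essential content (equality with the supremum) cleanly and uniformly.
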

\begin{proof}
 Let $\kappa$ by any constant of inner calmness* of $S$.
 Then, clearly, for any direction $v$ we have $\bar\kappa_v \leq \kappa$ and hence $\bar\kappa_v \leq \bar\kappa$ by the definition
 of infimum and $\sup_{v \in \Sp} \bar\kappa_v \leq \bar\kappa$ follows.
 Naturally, if $\bar\kappa = \infty$, the inequality holds as well.
 
 On the other hand, let $\kappa < \bar \kappa$.
 Then there exists a sequence $y_k \in \Omega$ converging to $\yb$
 such that for any $\xb \in \R^n$ there exists $k_0$ such that
 for all $x_k \in S(y_k)$ with $k \geq k_0$ one has
 \[\norm{x_k - \xb} > \kappa \norm{y_k - \yb}.\]
 By passing to a subsequence, however, we may assume that
 there exists $v \in \Sp$ such that
 $(y_k - \yb)/\norm{y_k - \yb} \to v$
 and hence we conclude that $\kappa$ can not be an inner calmness*
 constant of $S$ in direction $v$, i.e., $\kappa \leq \bar\kappa_v$.
 Thus $\bar\kappa \leq \bar\kappa_v$ holds as well
 and \eqref{eq:StandVSdirMod} follows.

 The proofs for inner calmness and calmness follow by the same steps.
\end{proof}
\noindent
We point out that \eqref{eq:StandVSdirMod} yields, in particular,
that the standard (nondirectional) version of any of the calmness properties
is equivalent to the validity of that property in every direction
from the unit sphere. Indeed, $\bar\kappa = +\infty$ if and only if
there exists a direction $v$ with $\bar\kappa_v = +\infty$.

We will also use the following relaxed version of inner calmness*.

\begin{definition}
A set-valued mapping $S: \R^m \rightrightarrows \R^n$ is called
{\em inner calm* at $\yb \in \R^m$ in direction $v \in \R^m$ wrt} $\Omega \subset \R^m$ {\em in the fuzzy sense},
  if, either $v \notin T_{\Omega}(\yb)$,
  or there exist $\kappa_v > 0$ ({\em constant} of fuzzy inner calmness* in direction $v$) together with sequences
  $y_k \in \Omega$ converging to $\yb$ from $v$
  and $x_k \in S(y_k)$ and a point $\xb \in \R^n$ such that
  \begin{equation*}
   \norm{x_k - \xb} \leq \kappa_v \norm{y_k - \yb}.
  \end{equation*}
  The infimum $\bar\kappa_v$ of all such $\kappa_v$ is called
  the {\em modulus} of fuzzy inner calmness* in direction $v$
  with the convention that $\bar\kappa_v := 0$ if $v \notin T_{\Omega}(\yb)$.
  We say that $S$ is {\em inner calm* at $\yb$ wrt}
  $\Omega \subset \R^m$ {\em in the fuzzy sense},
  provided it is inner calm* in the fuzzy sense
  in every direction $v \in \Sp$.
\end{definition}
\noindent
The case $v \notin T_{\Omega}(\yb)$ is included because
if there is no sequence $y_k \in \Omega$ converging to $\yb$ from $v$,
there is no requirement to meet.
Moreover, it is also needed in order to make sure that
inner calmness* in the fuzzy sense is implied by inner calmness*.

Note also that we do not define the (nondirectional) modulus of fuzzy inner calmness*. The reason is that, unlike in the case of the other
calmness properties, it may happen that a map is inner calm*
in the fuzzy sense in every direction, yet the supremum
of the moduli over the unit directions blows up.

\begin{example}\label{Example_1}
Consider the set $G \subset \R^3$ given as a curve
$t \to (y_1(t),y_2(t),x(t))$ for $t \in (0,2\pi]$ with
\begin{equation} \label{eq:CurveDef}
y_1(t) = \cos(t), \ y_2(t) = \sin(t), \ x(t) = \frac{1}{t} - \frac{1}{2\pi},
\end{equation}
see Figure 1.
\begin{figure}
\begin{center}
\begin{tikzpicture}
\begin{axis}[smooth,view={110}{30}]
\addplot3+[very thick,mark=none,domain=0.1:2*pi,samples=200,samples y=0]
	({cos(deg(x))},
	 {sin(deg(x))},
	 {1/x - 1/(2*pi)});
\end{axis}
\end{tikzpicture}
\end{center}
\caption{Sketch of set $G$.}
\end{figure}
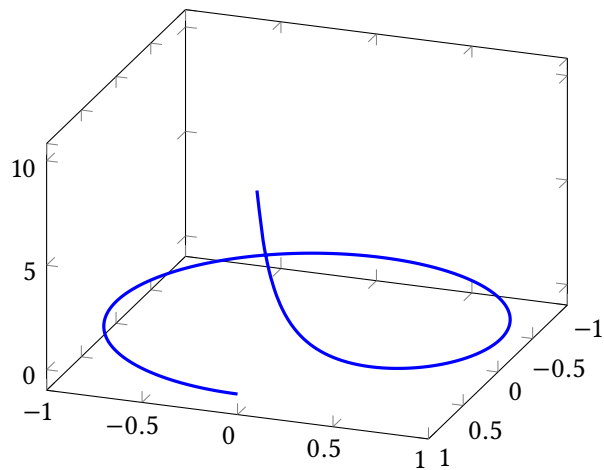
Now let $S:\R^2 \tto \R$ be the set-valued map whose graph consists
of rays starting from the origin $(0,0,0)$
and passing through the points of $G$, i.e.,
$\gph S= \{r G \mv r \geq 0\}$.
Consider point $\yb=(0,0)$. It is easy to see
that for any unit direction $(\cos(t), \sin(t))$ with $t \in (0,2\pi)$,
the modulus of inner calmness* and fuzzy inner calmness* coincide
and equal $1/t - 1/(2\pi)$.
Clearly, this quantity goes to infinity as $t$ goes to $0$,
while for $t=2\pi$ it becomes $0$.
In fact, this precisely entails the issue with direction $v=(1,0)$.
Indeed, on one hand it means that $S$ is inner calm* at $\yb$
in the fuzzy sense (the modulus in direction $v$ is $0$).
On the other hand, from Lemma \ref{Lem:DirVsStand} we conclude
that for the modulus of inner calmness* we have
$\bar\kappa=\bar\kappa_v=+\infty$ and hence $S$ is not inner calm* at $\yb$.
\end{example}

Finally, in case of a single-valued mapping $\varphi: \R^m \to \R^n$,
calmness at $\yb$ is defined as the existence of $\kappa>0$
and a neighbourhood $V$ of $\yb$ such that
\begin{equation} \label{eq : CalmnessDefSingValued}
  \norm{ \varphi(y) - \varphi(\yb)} \leq \kappa \norm{y - \yb} \quad \forall \, y \in V,
\end{equation}
or, equivalently, the existence of $\kappa>0$ such that
\eqref{eq : CalmnessDefSingValued}, with $y$ replaced
by iterates $y_k$ of an arbitrary sequence $y_k \to \yb$,
holds for sufficiently large $k$.
If it holds for $y_k$ converging to $\yb$ from
a direction $v$, we say $\varphi$ is calm at $\yb$
in direction $v$.
Interestingly, the above definition
coincides with the definition of inner calmness of $\varphi$
at $(\yb,\varphi(\yb))$, but {\em not}
with calmness (due to neighbourhood $U$)
or inner calmness* (due to not requiring $\xb \in S(\yb)$).
Naturally, if we restrict ourselves to continuous mappings,
all three notions coincide.
Further, $\varphi$ is called {\em Lipschitz continuous near} $\yb$ if the inequality
\[\norm{\varphi(y) - \varphi(y^{\prime})} \leq \kappa \norm{y - y^{\prime}} \ \forall \, y, y^{\prime} \in V\]
is fulfilled with $\kappa > 0$ and $V$ being a neighbourhood of $\yb$.

%
%
%
%
%


\section{Inner semicompactness and inner calmness*}

The role of inner calmness* as an assumption is highlighted
in the next section dealing with the calculus rules.
In this section, we discuss two interesting cases when it is satisfied.
In both cases, we first prove some basic result in terms of inner semicompactness,
which gets improved to inner calmness* (in the fuzzy sense)
after we add suitable polyhedrality assumptions.

\subsection{Polyhedral set-valued maps}

We begin by the simple example showing the limitations
of inner semicontinuity and inner calmness,
namely, that the lack of convexity of the graph can easily
lead to violation of these properties.
\begin{example}\label{ex:simple}
 Let $S:\R \tto \R$ by given by
 \begin{equation*}
 S(y)= \left\{\begin{array}{rl}
              0 & \textrm{ for } y \leq 0,\\
              1 & \textrm{ for } y \geq 0.
             \end{array}\right.
\end{equation*}
It is easy to see that $S$ is not inner semicontinuous at $(\yb,\xb) = (0,0)$,
due to $y_k := 1/k \to \yb$ and $S(y_k) = 1 \not\to 0$, or at $(\yb,\xb) = (0,1)$,
due to $y_k := -1/k \to \yb$ and $S(y_k) = 0 \not\to 1$.

On the other hand, $S$ is clearly inner semicompact (even inner calm*) at $\yb$.
Indeed, given a sequence $y_k \to \yb$, we can {\em choose} $\xb$ to be
either $0$ or $1$, depending on which of the two values is attained by $S(y_k)$ infinitely many times.
\end{example}

Next, consider a map $S : \R^m \tto \R^n$
whose graph is a finite union of sets $G_i$ for $i = 1, \ldots,l$
and denote by $S_i$ the maps with $\gph S_i = G_i$,
referred to as the components of $S$.
If $G_i$ are closed, i.e., components $S_i$ are osc,
then so is $\gph S$ and hence $S$ is also osc.
We will now show that the properties of inner semicompactness
and inner calmness* are also preserved under finite unions.

\begin{lemma}
 Given a map $S : \R^m \tto \R^n$, assume that its components $S_i$
 are inner semicompact (inner calm*) at $\yb \in \R^m$ wrt to $\dom S_i$.
 Then $S$ is inner semicompact (inner calm*) at $\yb$ wrt to its domain.
\end{lemma}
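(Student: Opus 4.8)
The plan is to exploit two elementary facts about the decomposition $\gph S = \bigcup_{i=1}^{l} G_i$: namely that $\dom S = \bigcup_{i=1}^{l} \dom S_i$ and that $S_i(y) \subseteq S(y)$ for every $y \in \R^m$, both being immediate consequences of $G_i \subseteq \gph S$. Given any sequence $y_k \to \yb$ with $y_k \in \dom S$, a pigeonhole argument yields an index $i \in \{1,\dots,l\}$ and an infinite set $K_0 \subseteq \N$ with $y_k \in \dom S_i$ for all $k \in K_0$. The subsequence $(y_k)_{k \in K_0}$ still converges to $\yb$ and lies entirely in $\dom S_i$, so the assumed inner-type property of the component $S_i$ (at $\yb$ wrt $\dom S_i$) applies to it directly.

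For the inner semicompactness part, applying inner semicompactness of $S_i$ to the subsequence $(y_k)_{k \in K_0}$ produces a further subsequence $K \subseteq K_0$ and points $x_k \in S_i(y_k) \subseteq S(y_k)$, $k \in K$, converging to some $\xb \in \R^n$. This is precisely what inner semicompactness of $S$ at $\yb$ wrt $\dom S$ requires, so the claim follows in this case.

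For the inner calmness* part, let $\kappa_i > 0$ be a constant of inner calmness* of $S_i$ at $\yb$ wrt $\dom S_i$ and set $\kappa := \max_{i=1,\dots,l} \kappa_i$; this maximum is finite precisely because the union is finite, which is the only place where finiteness of the decomposition is genuinely needed. Running the same argument, inner calmness* of $S_i$ now delivers a subsequence $K \subseteq K_0$, points $x_k \in S_i(y_k) \subseteq S(y_k)$ and $\xb \in \R^n$ with $\norm{x_k - \xb} \le \kappa_i \norm{y_k - \yb} \le \kappa \norm{y_k - \yb}$ for $k \in K$. Hence $\kappa$ witnesses inner calmness* of $S$ at $\yb$ wrt $\dom S$.

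The argument is essentially routine and I do not anticipate a real obstacle; the directional versions (in a fixed direction $v$) are obtained verbatim by restricting to sequences converging to $\yb$ from $v$, since neither the pigeonhole step nor the estimates are affected. The only point that deserves attention is that the common modulus $\kappa = \max_i \kappa_i$ remains finite, which is guaranteed by the finiteness of the family of components $S_i$.
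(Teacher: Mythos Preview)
Your proof is correct and follows essentially the same approach as the paper: a pigeonhole/subsequence argument to land in a single component $S_i$, then an application of the assumed property of $S_i$, with $\kappa := \max_i \kappa_i$ serving as the uniform constant in the inner calm* case. The only differences are cosmetic (you make the pigeonhole step and the inclusion $S_i(y_k)\subset S(y_k)$ explicit and add a remark on the directional variants), so nothing substantive separates the two arguments.
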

\begin{proof}
 Let $y_k \to \yb$ with $y_k \in \dom S$. By passing to a subsequence if necessary,
 we may assume that there exists $i$ with $y_k \in \dom S_i$ for all $k$.
 The inner semicompactness (inner calmness*) of $S_i$
 yields the existence of a sequence  $x_k \in S_i(y_k)$ converging to some $\xb$
 (and $\kappa_i \geq 0$ with $\norm{x_k - \xb} \leq \kappa_i \norm{y_k - \yb}$).
 Since $x_k \in S(y_k)$, the claim follows
 (with $\kappa := \max_{i = 1, \ldots,l} \kappa_i$).
\end{proof}

The question is how to apply this lemma.
It is known that mappings with convex graphs
are inner semicontinuous at any $(\yb,\xb)$ with $\yb$
in the interior of the domain \cite[Theorem 5.9(b)]{RoWe98}.
The following example shows, however, that even a map with
closed convex graph and domain may fail to be inner semicompact
wrt its domain at every point of the boundary of domain.
\begin{example} \label{Ex_isc_violated}
Consider again the set $G \subset \R^3$
from Example \ref{Example_1}.
Now let $S:\R^2 \tto \R$ be the set-valued map whose graph is
the closure of the convex hull of $G$. Note also that the domain of $S$
is the closed unit ball. We claim that $S$ is not inner semicompact at $\yb=(1,0)$ wrt to $\dom S$.
Indeed, consider a sequence $t_k \downarrow 0$
and set $y_k := (\cos(t_k),\sin(t_k)) \to \yb$.
We will show that there is no $x_k \in S(y_k)$ with $x_k < 1/t_k \to \infty$.
To this end, for every $k$ we construct a halfspace containing set $G$
but no $(y_k,x_k)$ with $x_k < 1/t_k$.
Set $q_k := t_k^2 (1/t_k - 1/(2\pi))$,
\[b_k := \left( \sin(t_k) + q_k \cos(t_k), 1 - \cos(t_k) + q_k \sin(t_k),
-t_k^2(1 - \cos(t_k))\right) \]
and consider the halfspace
\[\mathcal{H}_k:=\{z \in \R^3 \mv \skalp{b_k,z} \leq \sin(t_k) + q_k \cos(t_k)\}.\]

Clearly, $(1,0,0) \in \mathcal{H}_k$.
Moreover, let us explain that $G \subset \mathcal{H}_k$,
i.e., that every point $(y,x)$
of the form \eqref{eq:CurveDef} also belongs to $\mathcal{H}_k$.
Consider the function
\[h_k(t) := \skalp{b_k,(\cos(t),\sin(t),1/t - 1/(2\pi))}.\]
A simple computation yields
$h_k(t_k) = \sin(t_k) + q_k \cos(t_k)$.
Thus, it suffices to show that $h_k$
attains its global maximum over $(0,2\pi]$ at $t_k$.
Since
\[h_k^{\prime}(t) = \skalp{b_k,(-\sin(t),\cos(t),-1/t^2)} \quad \textrm{and} \quad
h_k^{\prime \prime}(t) = \skalp{b_k,(-\cos(t),-\sin(t),2/t^3)},\]
we get $h_k^{\prime}(t_k)=0$ and
$h_k^{\prime \prime}(t_k)= - \big( \sin(t_k) + q_k + 2(1 - \cos(t_k))/t_k \big)$ and thus
$h_k^{\prime \prime}(t_k) < 0$ for $t_k < \pi$.

In order to see that this maximum is not only local,
note that $h_k(t) \to - \infty$ as $t \downarrow 0$
while $h_k(2\pi) = \sin(t_k) + q_k \cos(t_k) = h_k(t_k)$.
Without going into details, taking into account properties of
the functions defining $h_k$, we deduce that there is only one more
stationary point of $h_k$ in $(0,2\pi]$, corresponding to a local minimum.
Thus, the maximum at $t_k$ is global.

Finally, since $-t_k^2(1 - \cos(t_k)) < 0$ for $t_k \in (0,2\pi)$,
a point $(y_k,x)$ belongs to $\mathcal{H}_k$ if and only if
$x \geq 1/t_k$. Since $\gph S$ is the intersection of all
the closed halfspaces containing $G$ by \cite[Corollary 11.5.1]{Ro70},
we conclude that there is no $x_k \in S(y_k)$ with $x_k < 1/t_k$.
Hence, $S$ is not inner semicompact at $\yb$ wrt to $\dom S$.
\end{example}

We forgo the details of applying the above lemma
to a map whose graph is a union
of general convex sets, since the result inevitably suffers
from the limitations to the interiors of the domains of its components.
Instead, we look into polyhedral mappings.
Recall that a set $D\subset \R^s$ is said to be {\em convex polyhedral}
if it can be represented as the intersection of finitely many halfspaces.
We say that a set $E\subset \R^s$ is {\em polyhedral}
if it is a union of finitely many convex polyhedral sets.
A set-valued map is called (convex) polyhedral, if
its graph is a (convex) polyhedral set.

In the polyhedral setting, there is no problem with
the points on the boundary of the domain.
Indeed, the prominent result of Walkup and Wets \cite{WaWe69},
see also \cite[Example 9.35]{RoWe98}, says that
a convex polyhedral map $S$ is Lipschitz continuous on its domain,
i.e., there exist $\kappa > 0$ such that
\[S(y') \subset S(y) + \kappa \norm{y' - y}\B \quad \forall \, y,y' \in \dom S.\]
Since this property is obviously stronger even than inner calmness at any point,
we obtain the following result.
For the sake of completeness, we include also the well-known
result regarding (outer) calmness due to Robinson \cite{Rob81},
who used the name {\em upper Lipschitzness}.
We point out that the calmness below
is not localized to a point $\xb$,
since there is no neighbourhood $U$ as in \eqref{eq : CalmnessDef}.
\begin{theorem}[Two-sided calmness of polyhedral maps]
\label{the:2-sidedCalmnessPolyhedralMaps}
 Let $S:\R^m \rightrightarrows \R^n$ be a polyhedral set-valued map.
 Then there exists a number $\kappa > 0$
 such that for every $\yb \in \dom S$,
 $S$ is inner calm* with constant $\kappa$ wrt $\dom S$ at $\yb$
 as well as calm with constant $\kappa$ at $\yb$, i.e.,
 \[S(y) \subset S(\yb) + \kappa \norm{y - \yb}\B\]
 holds for all $y$ near $\yb$.
\end{theorem}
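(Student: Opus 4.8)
The plan is to reduce both assertions to the Lipschitz continuity of convex polyhedral maps on their domains (Walkup and Wets \cite{WaWe69}, recalled just above), combined with the behaviour of the inner and outer calmness notions under finite unions. I would begin by writing $\gph S = \bigcup_{i=1}^l G_i$ with each $G_i$ convex polyhedral and letting $S_i$ denote the component of $S$ with $\gph S_i = G_i$. Note that each $\dom S_i$ is the image of $G_i$ under a coordinate projection, hence again a polyhedron (Fourier--Motzkin elimination), in particular closed; likewise every value $S_i(y)$ is a polyhedron, hence closed.

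For inner calmness*, I would apply the Walkup and Wets result to each $S_i$ to obtain a constant $\kappa_i > 0$ with $S_i(y') \subset S_i(y) + \kappa_i \norm{y'-y}\B$ for all $y,y' \in \dom S_i$. Taking $y' = \yb$ here shows, whenever $\yb \in \dom S_i$, that $S_i$ is inner calm at $(\yb,\xb)$ for every $\xb \in S_i(\yb)$ with constant $\kappa_i$ wrt $\dom S_i$, hence inner calm* at $\yb$ wrt $\dom S_i$ with constant $\kappa_i$; when $\yb \notin \dom S_i$, closedness of $\dom S_i$ ensures no sequence in $\dom S_i$ converges to $\yb$, so inner calmness* wrt $\dom S_i$ holds vacuously. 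Setting $\kappa := \max_{i=1,\ldots,l}\kappa_i$, the lemma on finite unions established above then delivers inner calmness* of $S$ at every $\yb \in \dom S$ wrt $\dom S = \bigcup_i \dom S_i$ with the single constant $\kappa$.

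For the (outer) calmness estimate I would argue directly, since the union lemma above was stated only for the inner properties. Fix $\yb$. For each index $i$ with $\yb \in \dom S_i$, the inclusion above with $y'=\yb$ gives $S_i(y)\subset S_i(\yb) + \kappa_i\norm{y-\yb}\B$ for all $y \in \dom S_i$, and this is trivial for $y \notin \dom S_i$ since then $S_i(y)=\emptyset$; for each index $i$ with $\yb \notin \dom S_i$, closedness of $\dom S_i$ furnishes a neighbourhood $V_i$ of $\yb$ on which $S_i$ vanishes, so the same inclusion holds trivially there. Intersecting the finitely many neighbourhoods produces a neighbourhood $V$ of $\yb$ on which $S(y) = \bigcup_i S_i(y) \subset \bigcup_i\big(S_i(\yb)+\kappa_i\norm{y-\yb}\B\big) \subset S(\yb)+\kappa\norm{y-\yb}\B$, which is precisely the claimed calmness of $S$ at $\yb$ with constant $\kappa$.

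I do not expect a genuine obstacle: once the two ingredients are in hand the argument is essentially bookkeeping. The only points deserving care are the uniformity of $\kappa$ over all base points $\yb$ — guaranteed by finiteness of the index set and by the fact that the constants $\kappa_i$ depend only on the polyhedra $G_i$, not on $\yb$ — and the treatment of base points lying outside $\dom S_i$, which is dispatched by the closedness of $\dom S_i$.
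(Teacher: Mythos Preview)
Your proposal is correct and follows essentially the paper's approach: the paper derives inner calmness* from the Walkup--Wets Lipschitz property of each convex polyhedral component together with the preceding union lemma, and for outer calmness simply cites Robinson \cite{Rob81}, whereas you spell out that argument directly. One small slip: in the outer calmness paragraph the substitution ``$y'=\yb$'' in the Walkup--Wets inclusion gives $S_i(\yb)\subset S_i(y)+\kappa_i\norm{y-\yb}\B$, not the inclusion you state; you want the roles of $y$ and $y'$ swapped, which is immediate since the Walkup--Wets estimate holds for all pairs in $\dom S_i$.
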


Note that, in inner calmness* we have found a suitable inner Lipschitzian property
which, from the Lipschitzness of convex polyhedral maps, extends to polyhedral maps.
The key reason is that inner calmness* is based on
inner semicompactness, which is preserved under finite unions.

\subsection{Multiplier mappings}

In \cite[Proposition 4.1]{GfrOut16}, Gfrerer and Outrata showed the following
interesting result.
\begin{proposition} \label{Pro:MSRMain}
 Let $M:\R^n\tto\R^m$ be a set-valued mapping having locally closed graph around $(x,\yb)\in\gph M$ and assume
 that $M$ is metrically subregular at $(x,\yb)$ with
 modulus $\kappa$. Then
 \begin{equation}\label{eq:NCtoM}
  N_{M^{-1}(\yb)}(x) \subset
  \big\{x^* \mv \exists y^* \in \kappa \norm{x^*} \B:
  (x^*,y^*) \in N_{\gph M}(x,\yb) \big\}.
 \end{equation}
\end{proposition}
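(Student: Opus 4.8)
The plan is to reduce \eqref{eq:NCtoM} to a regular (Fr\'echet) estimate valid at all points of $\Gamma:=M^{-1}(\yb)$ near $x$ --- note $(x,\yb)\in\gph M$ forces $x\in\Gamma$ --- obtained by an exact penalization argument that tracks the subregularity modulus, and then to pass to the limit. Write $\varrho(z):=\dist{\yb,M(z)}=\inf\{\norm{\yb-y}\mv(z,y)\in\gph M\}$. Metric subregularity at $(x,\yb)$ with modulus $\kappa$ means that for each $\kappa'>\kappa$ there is a neighbourhood $U$ of $x$ on which $\dist{z,\Gamma}\le\kappa'\varrho(z)$; moreover, local closedness of $\gph M$ around $(x,\yb)$ makes $\Gamma$ locally closed around $x$.

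The core claim is: for every $\kappa'>\kappa$ there is a neighbourhood $U'$ of $x$ such that for each $x'\in\Gamma\cap U'$ and each $u^\ast\in\widehat N_\Gamma(x')$ one can find $v^\ast$ with $\norm{v^\ast}\le\kappa'\norm{u^\ast}$ and $(u^\ast,v^\ast)\in N_{\gph M}(x',\yb)$. To establish it, I would fix $\varepsilon>0$ and note that, by definition of the regular normal cone, $x'$ is a local minimiser over $\Gamma$ of the convex function $z\mapsto-\skalp{u^\ast,z}+\varepsilon\norm{z-x'}$, whose Lipschitz constant is at most $\norm{u^\ast}+\varepsilon$. Shrinking $U'$ so that the subregularity inequality holds on a full neighbourhood of every point of $U'$, the classical exact penalty principle followed by that inequality turns $x'$ into an unconstrained local minimiser of $z\mapsto-\skalp{u^\ast,z}+\varepsilon\norm{z-x'}+c\,\varrho(z)$, where $c:=(\norm{u^\ast}+\varepsilon)\kappa'$. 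Since the infimum defining $\varrho(z)$ is attained with value $0$ at $y=\yb$ when $z=x'$, the pair $(x',\yb)$ is then a local minimiser over $\gph M$ of
\[\tilde F(z,y):=-\skalp{u^\ast,z}+\varepsilon\norm{z-x'}+c\,\norm{\yb-y}.\]
Fermat's rule for $\tilde F+\delta_{\gph M}$ together with the limiting sum rule --- applicable because $\tilde F$ is convex and Lipschitz --- yields $0\in\partial\tilde F(x',\yb)+N_{\gph M}(x',\yb)$, and $\partial\tilde F(x',\yb)=(-u^\ast+\varepsilon\B)\times c\B$. Hence $(u^\ast+\varepsilon e,v^\ast_\varepsilon)\in N_{\gph M}(x',\yb)$ for some $e$ with $\norm{e}\le1$ and some $v^\ast_\varepsilon$ with $\norm{v^\ast_\varepsilon}\le c$. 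Letting $\varepsilon\downarrow0$ and using boundedness of $(v^\ast_\varepsilon)$ and closedness of $N_{\gph M}(x',\yb)$ produces the desired $v^\ast$.

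To conclude, take $x^\ast\in N_\Gamma(x)$ and choose $x_k\to x$ in $\Gamma$ and $x^\ast_k\to x^\ast$ with $x^\ast_k\in\widehat N_\Gamma(x_k)$. For large $k$ the core claim supplies $y^\ast_k$ with $\norm{y^\ast_k}\le\kappa'\norm{x^\ast_k}$ and $(x^\ast_k,y^\ast_k)\in N_{\gph M}(x_k,\yb)$; as $(y^\ast_k)$ is bounded, along a subsequence $y^\ast_k\to y^\ast$ with $\norm{y^\ast}\le\kappa'\norm{x^\ast}$, and outer semicontinuity of the map $w\tto N_{\gph M}(w)$ gives $(x^\ast,y^\ast)\in N_{\gph M}(x,\yb)$. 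Since this holds for every $\kappa'>\kappa$, a last passage $\kappa'\downarrow\kappa$ --- once more via boundedness of the candidates and closedness of $N_{\gph M}(x,\yb)$ --- yields $y^\ast$ with $\norm{y^\ast}\le\kappa\norm{x^\ast}$ and $(x^\ast,y^\ast)\in N_{\gph M}(x,\yb)$, which is \eqref{eq:NCtoM}.

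The step I expect to be the main obstacle is obtaining the \emph{precise} modulus $\kappa$ rather than merely the inclusion. Writing $\Gamma=F^{-1}(\gph M)$ with $F(z)=(z,\yb)$ and invoking a preimage rule for limiting normals already delivers $(x^\ast,y^\ast)\in N_{\gph M}(x,\yb)$ for \emph{some} $y^\ast$, but with no control on $\norm{y^\ast}$; the bound is bought only by matching the Lipschitz constant $\norm{u^\ast}+\varepsilon$ of the perturbed linear objective against $\kappa'$ in the penalty step and then carrying out the two successive limits $\varepsilon\downarrow0$ and $\kappa'\downarrow\kappa$. A secondary technical point, easy to overlook, is that the subregularity inequality must be exploited on a fixed neighbourhood of $x$ rather than only at $x$, so that the core claim can be invoked along the sequence $x_k\to x$.
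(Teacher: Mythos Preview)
The paper does not prove this proposition itself; it is quoted from \cite[Proposition~4.1]{GfrOut16}, with the added remark that the idea of extracting the bound $\norm{y^*}\le\kappa\norm{x^*}$ from metric subregularity already appears in the proof of \cite[Theorem~4.1]{HenJouOut02}. There is therefore no in-paper proof to compare against.

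Your argument is a correct self-contained proof along the standard exact-penalization route (which is indeed the approach underlying \cite{HenJouOut02} and, in directional form, \cite{GfrOut16}). The reduction to a regular-normal estimate at nearby $x'\in\Gamma$, the penalty step with constant $(\norm{u^*}+\varepsilon)\kappa'$, the passage from an unconstrained local minimiser of $f+c\,\varrho$ to a local minimiser of $\tilde F$ over $\gph M$, Fermat's rule together with the Lipschitz sum rule, and the three successive limits $\varepsilon\downarrow0$, $x_k\to x$, $\kappa'\downarrow\kappa$ are all sound. Your closing remarks correctly identify where the precise modulus is won and why the subregularity neighbourhood must be uniform along the approximating sequence.
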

\noindent
In fact, \cite[Proposition 4.1]{GfrOut16} contains,
apart from the additional result for the tangent cone,
the stronger, directional, estimate.
For our purposes, however, the important
part is the {\em bound} $\norm{y^*} \leq \kappa \norm{x^*}$.
Let us mention that the idea that metric subregularity
yields this bound appeared already in the proof of \cite[Theorem 4.1]{HenJouOut02}.
Moreover, similar arguments were also used in \cite[Lemma 3.2]{ChHi17}
in the setting of nonlinear programs.

\begin{corollary}\label{cor:inner_semicompactness}
 In the setting of Proposition \ref{Pro:MSRMain},
 metric subregularity of $M$ at $(x,\yb)$ implies
 inner semicompactness of the mapping
 \[\Lambda(x,x^*) := D^* M^{-1}(\yb,x)(-x^*)=
 \{y^* \mv (x^*,y^*) \in N_{\gph M}(x,\yb)\}
 \]
 at $(x,x^*)$ wrt $\gph N_{M^{-1}(\yb)}$ for every $x^* \in N_{M^{-1}(\yb)}(x)$.
\end{corollary}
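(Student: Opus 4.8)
The plan is to unwind the definitions and reduce the claim to a direct application of Proposition~\ref{Pro:MSRMain}. Fix $x^* \in N_{M^{-1}(\yb)}(x)$. We must show that $\Lambda$ is inner semicompact at $(x,x^*)$ with respect to $\gph N_{M^{-1}(\yb)}$; that is, given any sequence $(x_k,x_k^*) \to (x,x^*)$ with $(x_k,x_k^*) \in \gph N_{M^{-1}(\yb)}$ (so $x_k^* \in N_{M^{-1}(\yb)}(x_k)$), we must produce a subsequence $K$ and $y_k^* \in \Lambda(x_k,x_k^*)$ for $k \in K$ converging to some $\yb^*$. By the definition of $\Lambda$, this means finding $y_k^*$ with $(x_k^*,y_k^*) \in N_{\gph M}(x_k,\yb)$.

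First I would observe that the bound in \eqref{eq:NCtoM} is exactly what controls the relevant sequence. Since $M$ is metrically subregular at $(x,\yb)$, it is in particular metrically subregular at every nearby point of its graph lying over $\yb$ — more precisely, one should check that for $x_k$ close enough to $x$ with $x_k \in M^{-1}(\yb)$, the map $M$ is metrically subregular at $(x_k,\yb)$ with a uniform modulus, say $2\kappa$; this is a standard localization fact following directly from Definition~\ref{Def: MS} (the neighbourhood $U$ of $x$ works, with a slightly enlarged modulus, as a neighbourhood around any $x_k \in U$ sufficiently close to $x$). Note $x_k \in M^{-1}(\yb)$ indeed holds because $x_k^* \in N_{M^{-1}(\yb)}(x_k)$ forces $x_k \in M^{-1}(\yb)$ (the limiting normal cone is empty off the set). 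Applying Proposition~\ref{Pro:MSRMain} at $(x_k,\yb)$ to the normal $x_k^* \in N_{M^{-1}(\yb)}(x_k)$ yields $y_k^*$ with $\norm{y_k^*} \leq 2\kappa\norm{x_k^*}$ and $(x_k^*,y_k^*) \in N_{\gph M}(x_k,\yb)$, i.e. $y_k^* \in \Lambda(x_k,x_k^*)$.

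It remains to extract a convergent subsequence. Since $x_k^* \to x^*$, the sequence $\norm{x_k^*}$ is bounded, hence $\norm{y_k^*} \leq 2\kappa\norm{x_k^*}$ shows $(y_k^*)$ is bounded; passing to a subsequence $K$, we get $y_k^* \to \yb^*$ for some $\yb^*$, which establishes inner semicompactness. (One may additionally remark, for later use in the fuzzy inner calmness* refinements, that outer semicontinuity of $N_{\gph M}(\cdot,\cdot)$ — the graph being locally closed — forces $(x^*,\yb^*) \in N_{\gph M}(x,\yb)$, so $\yb^* \in \Lambda(x,x^*)$, although inner semicompactness as stated does not require the limit to lie in $\Lambda(x,x^*)$.)

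The main obstacle — really the only nonroutine point — is the uniform-modulus localization of metric subregularity in the second paragraph: one needs that metric subregularity at $(x,\yb)$ propagates, with a controlled modulus, to the points $(x_k,\yb)$ for $x_k \to x$ in $M^{-1}(\yb)$, so that Proposition~\ref{Pro:MSRMain} can be invoked at each $x_k$ with a single constant. This follows from the neighbourhood formulation in Definition~\ref{Def: MS}: for $x_k$ within $\tfrac12$ of the radius of $U$, the estimate $\dist{x,M^{-1}(\yb)} \le \kappa\,\dist{\yb,M(x)}$ on $U$, combined with $x_k \in M^{-1}(\yb)$, yields $\dist{x',M^{-1}(\yb)} \le \kappa\,\dist{\yb,M(x')}$ for $x'$ in a fixed ball around $x_k$ (the same modulus $\kappa$ in fact suffices). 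Everything else is immediate from the definitions and the boundedness of $(x_k^*)$.
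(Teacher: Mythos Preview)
Your proposal is correct and follows essentially the same route as the paper's proof: pick a sequence $(x_k,x_k^*)\to(x,x^*)$ in $\gph N_{M^{-1}(\yb)}$, propagate metric subregularity from $(x,\yb)$ to $(x_k,\yb)$ with a uniform modulus, apply Proposition~\ref{Pro:MSRMain} at each $x_k$ to obtain $y_k^*\in\Lambda(x_k,x_k^*)$ with $\norm{y_k^*}\le\kappa'\norm{x_k^*}$, and extract a convergent subsequence by boundedness. The only difference is cosmetic: the paper cites an external lemma (\cite[Lemma~2]{Gfr14a}) for the uniform-modulus propagation, whereas you argue it directly from the neighbourhood formulation in Definition~\ref{Def: MS}; your direct argument is valid (indeed the same $\kappa$ works, no enlargement needed) and makes the proof self-contained.
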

\begin{proof}
 Suppose $(x_k,x_k^*) \to (x,x^*)$
 with $x_k^* \in N_{M^{-1}(\yb)}(x_k)$.
 Metric subregularity of $M$ at $(x,\yb)$ implies
 metric subregularity of $M$ at $(x_k,\yb)$ for
 all sufficiently large $k$ with the modulus $\kappa^{\prime}$
 independent of $k$ (see, e.g., \cite[Lemma 2]{Gfr14a} with $u=0$ and $\gamma=1$).
 Hence, Proposition \ref{Pro:MSRMain} yields
 the existence of $y_k^* \in \Lambda(x_k,x_k^*)$
 with $\norm{y_k^*} \leq \kappa^{\prime} \norm{x_k^*}$.
 In particular, by passing to a subsequence we may
 assume that $y_k^* \to y^*$ for some $y^*$,
 showing the inner semicompactness of $\Lambda$.
\end{proof}

For the constraint mapping $M(x)=\varphi(x) - Q$,
 where $Q \subset \R^m$ is a closed set and $\varphi: \R^n \to \R^m$
 is continuously differentiable, 
 \eqref{eq:NCtoM}
 gives the standard pre-image calculus rule
 \begin{equation}\label{eq:pre-image}
 N_C(x) \subset \nabla \varphi(x)^T N_Q(\varphi(x))
 \end{equation}
  where $C:=M^{-1}(0)=\varphi^{-1}(Q)$, see, e.g., \cite[Lemma 6.1]{BeGfrOut18a}.
 Moreover, denoting $\beta(x) := \nabla \varphi(x)$,
  the (multiplier) mapping $\Lambda$ attains the standard form
\[\Lambda(x,x^*) := \{\lambda \in N_{Q}(\varphi(x)) \mv \beta(x)^T \lambda = x^*\},\]
where we use $\lambda$ instead of $y^*$ to denote the multipliers.

\begin{remark}[On inner semicompactness of the multiplier mapping]
In the area of second-order analysis,
one often deals with a sequence $x_k^* \in N_{\varphi^{-1}(Q)}(x_k)$.
Under metric subregularity of $\varphi(x) - Q$, one gets the existence of suitable
multipliers $\lambda_k$, but needs to find also a limit multiplier
$\lambda \leftarrow \lambda_k$.
Hence, many authors used to assume boundedness of the multipliers,
for which they had to impose some stronger conditions,
such as the generalized Mangasarian-Fromovitz constraint qualification (GMFCQ),
see also \cite{gm15} for more subtle approach.
Thanks to the inner semicompactness of $\Lambda$
from Corollary \ref{cor:inner_semicompactness}, however,
we know that metric subregularity alone is sufficient for this task.
This fact has already been utilized in several recent works, see, e.g.,
\cite{BeGfrMor18,BeGfrOut18,GfrMo17a,GfrMo17}.
\end{remark}

We conclude this section by showing that $\Lambda$
is even inner calm* (in the fuzzy sense)
provided $Q$ is polyhedral.
The following lemma will be essential for our proof.

\begin{lemma}\label{lemma:Red_lemma_ext}
 Let $D$ be a convex polyhedral set and let $\bar z^* \in N_D(\bar z)$.
 Then there exists a neighbourhood $\mathcal{O}$ of $0$ such that
 \begin{equation}
   \big(\gph N_D - (\bar z,\bar z^*)\big) \cap \mathcal{O} = \{(w,w^*) \mv w \in \K_D(\bar z,\bar z^*), w^* \in (\K_D(\bar z,\bar z^*))^{\circ}, \skalp{w,w^*}=0\} \cap \mathcal{O},
 \end{equation}
  where $\K_D(\bar z,\bar z^*) := T_D(\bar z) \cap [\bar z^*]^\perp$
 stands for the {\em critical cone} to $D$ at $(\bar z,\bar z^*)$.
 \end{lemma}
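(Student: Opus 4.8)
The plan is to establish this as a local description of the graph of the normal cone mapping of a convex polyhedral set, which is a classical reduction result (often attributed to Robinson; see also Dontchev–Rockafellar). First I would recall the well-known fact that for a convex polyhedral set $D$, the tangent cone $T_D(z)$ is locally invariant in the sense that $D - \bar z$ agrees with $T_D(\bar z)$ near the origin, and $T_D(\bar z)$ is itself convex polyhedral. Consequently, for $z$ near $\bar z$ and $z^*$ near $\bar z^*$, we have the equivalence $z^* \in N_D(z) \iff z - \bar z \in T_D(\bar z),\ z^* \in N_{T_D(\bar z)}(z - \bar z)$. This reduces the problem to describing $\gph N_{K}$ near $(0, \bar z^*)$ for the polyhedral cone $K := T_D(\bar z)$.

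For the cone $K$, the next step is to use positive homogeneity and the fact that there are only finitely many faces. Writing $w = z - \bar z$ and $w^* = z^* - \bar z^*$, I would show that for $(w, w^*)$ in a small enough neighbourhood of $0$, the condition $\bar z^* + w^* \in N_K(w)$ is equivalent to the three conditions $w \in \K_D(\bar z, \bar z^*)$, $w^* \in (\K_D(\bar z, \bar z^*))^\circ$, and $\skalp{w,w^*} = 0$. The key point is that $N_K(w)$ for small $w$ depends only on the minimal face of $K$ containing $w$; since $\bar z^* \in N_K(0) = K^\circ$, the face $[\bar z^*]^\perp \cap K$ plays the distinguished role, and $w^* $ being small forces the relevant face constraints to localize exactly to the critical cone $\K_D(\bar z, \bar z^*) = T_D(\bar z) \cap [\bar z^*]^\perp$. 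The orthogonality $\skalp{w, w^*} = 0$ comes from the complementarity inherent in $w^* \in N_K(w)$ combined with $\skalp{\bar z^*, w} = 0$ (which holds once $w$ lies in the critical cone).

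Concretely, I would argue both inclusions. For ``$\subset$'': given $(w, w^*)$ small with $\bar z^* + w^* \in N_D(\bar z + w)$, the reduction above gives $\bar z^* + w^* \in N_K(w)$, hence $w \in K$ and $\skalp{\bar z^* + w^*, w} = 0$; since $w$ is small and $\skalp{\bar z^*, w} \le 0$ with the reverse forced by complementarity one gets $w \in [\bar z^*]^\perp$, so $w \in \K_D(\bar z, \bar z^*)$ and then $\skalp{w^*, w} = 0$; finally $w^* \in N_K(w) - \bar z^*$ restricted to the critical cone yields $w^* \in (\K_D(\bar z, \bar z^*))^\circ$ using that the normal cone to $K$ at interior-of-face points equals the polar of the face. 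For ``$\supset$'': given $w \in \K_D(\bar z, \bar z^*)$, $w^* \in (\K_D(\bar z, \bar z^*))^\circ$ with $\skalp{w, w^*} = 0$ and both small, one checks $\bar z^* + w^* \in N_K(w)$ directly — $w \in K$ is clear, and for any $u \in K$ near $0$, $\skalp{\bar z^* + w^*, u - w} = \skalp{\bar z^*, u} + \skalp{w^*, u} \le 0$ using $\bar z^* \in K^\circ$ and a case analysis on whether $u$ lies in the critical cone (if not, $\skalp{\bar z^*, u} < 0$ strictly and dominates the small term $\skalp{w^*, u}$).

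The main obstacle is making the ``finitely many faces, so everything localizes'' argument precise: one must choose the neighbourhood $\mathcal{O}$ uniformly small enough that (a) $D - \bar z$ coincides with $T_D(\bar z)$ on it, (b) the only faces of $K$ meeting the $w$-projection of $\mathcal{O}$ are those containing the critical cone, and (c) on faces $F$ of $K$ not contained in $[\bar z^*]^\perp$, the strict inequality $\skalp{\bar z^*, u} \le -\delta \norm{u}$ holds for a uniform $\delta > 0$, so that adding the small perturbation $w^*$ cannot restore membership in $N_K(\cdot)$ or $(\K_D(\bar z,\bar z^*))^\circ$. All of (a)--(c) follow from polyhedrality and compactness of the unit sphere intersected with finitely many cones, but the bookkeeping needs care. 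Once $\mathcal{O}$ is fixed appropriately, the two inclusions are routine computations as sketched.
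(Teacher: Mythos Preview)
Your proposal is essentially a from-scratch proof of the reduction lemma, whereas the paper's proof is a two-line citation: it invokes the reduction lemma \cite[Lemma~2E.4]{DoRo14}, which gives $\big(\gph N_D - (\bar z,\bar z^*)\big)\cap\mathcal{O}=\gph N_{\K_D(\bar z,\bar z^*)}\cap\mathcal{O}$, and then \cite[Proposition~2A.3]{DoRo14} to identify $\gph N_{\K_D(\bar z,\bar z^*)}$ with the set on the right-hand side. So the approaches differ in that the paper delegates the work to a known result, while you attempt to reprove that result directly. Your outline (reduce $D$ to the polyhedral cone $K:=T_D(\bar z)$, then exploit the face structure) is indeed the standard way to prove the reduction lemma, and if carried out correctly it yields a self-contained argument.

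However, two of your technical claims are misstated and, as written, false. In (b), every face of a cone contains $0$, so ``the only faces of $K$ meeting the $w$-projection of $\mathcal{O}$'' is all of them; this condition is vacuous. In (c), the inequality $\skalp{\bar z^*,u}\le -\delta\norm{u}$ cannot hold uniformly on a face $F\not\subset[\bar z^*]^\perp$, since such $F$ may still intersect $[\bar z^*]^\perp$ nontrivially (e.g.\ $K=\R^2_+$, $\bar z^*=(-1,0)$, $F=K$). Consequently your ``$\supset$'' case analysis, where you claim $\skalp{\bar z^*,u}<0$ ``dominates'' $\skalp{w^*,u}$ for $u\in K\setminus\K$, breaks down near the boundary of the critical cone, and your ``$\subset$'' step ``the reverse forced by complementarity'' is not justified by what you wrote.

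The clean fix is to work with faces of $K^\circ$ rather than $K$. For ``$\subset$'': $N_K(w)=K^\circ\cap[w]^\perp$ is one of the finitely many faces of $K^\circ$; if $\bar z^*\notin N_K(w)$ then $\norm{w^*}\ge\dist{\bar z^*,N_K(w)}\ge\delta$, where $\delta>0$ is the minimum distance from $\bar z^*$ to the faces of $K^\circ$ not containing it. Hence for $\norm{w^*}<\delta$ one gets $\bar z^*\in N_K(w)$, i.e.\ $\skalp{\bar z^*,w}=0$, so $w\in\K$ and then $\skalp{w,w^*}=0$; finally $w^*\in K^\circ-\bar z^*$, which for $w^*$ small equals $T_{K^\circ}(\bar z^*)=(N_{K^\circ}(\bar z^*))^\circ=(K\cap[\bar z^*]^\perp)^\circ=\K^\circ$. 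For ``$\supset$'': the same identity $T_{K^\circ}(\bar z^*)=\K^\circ$ together with local agreement of $K^\circ-\bar z^*$ and its tangent cone gives $\bar z^*+w^*\in K^\circ$ for $w^*\in\K^\circ$ small, and then $\skalp{\bar z^*+w^*,w}=0$ is immediate from $w\in\K$ and $\skalp{w,w^*}=0$. With these corrections your direct proof goes through; but note that what you are proving \emph{is} precisely \cite[Lemma~2E.4]{DoRo14}, so the paper's route is considerably shorter.
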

\begin{proof}
 The {\em reduction lemma} \cite[Lemma 2E.4]{DoRo14}
 yields the existence of a neighbourhood $\mathcal{O}$ of $0$ such that
 \begin{equation}\label{eq:Reduction_Lemma}
   \big(\gph N_D - (\bar z,\bar z^*)\big) \cap \mathcal{O} = \gph N_{\K_D(\bar z,\bar z^*)} \cap \mathcal{O},
 \end{equation}
 and \cite[Proposition 2A.3]{DoRo14} gives
 the description of $\gph N_{\K_D(\bar z,\bar z^*)}$.
\end{proof}

\begin{theorem}\label{the:Main_Inner_calmness}
 Let $(x,x^*) \in \gph N_{C}$ for $C=\varphi^{-1}(Q)$
 with twice continuously differentiable $\varphi$
 and assume that the constraint mapping
 $M(x) = \varphi(x) - Q$ is metrically subregular at $(x,0)$.
 \begin{itemize}
  \item[(i)] If $Q$ is Clarke regular near $\varphi(x)$
  (which is the case if $Q$ is convex), then $\Lambda$ is inner semicompact at $(x,x^*)$
  wrt its domain.
  \item[(ii)] If $Q$ is convex polyhedral, then $\Lambda$ is inner calm* at $(x,x^*)$
  wrt its domain in the fuzzy sense.
  \item[(iii)] Given $(u,u^*) \in \Sp$, if $Q$ is convex polyhedral
  and the system $\varphi(\cdot) \in Q$ is {\em non-degenerate at} $x$ {\em in direction} $u$, i.e.,
  \begin{equation}\label{eq:DirNondeg}
    \beta(x)^T \lambda =0, \
    \lambda \in \Span N_{T_{Q}(\varphi(x))}(\nabla \varphi(x)u) \ \Longrightarrow \
    \lambda = 0,
  \end{equation}
  then $\Lambda$ is inner calm* at $(x,x^*)$ wrt its domain
  in direction $(u,u^*)$.
 \end{itemize}
\end{theorem}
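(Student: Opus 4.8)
The plan is to exploit the local polyhedrality supplied by the reduction lemma (Lemma~\ref{lemma:Red_lemma_ext}) to reduce all three assertions to the behaviour of a \emph{polyhedral} multiplier map, and then to invoke the results of the previous subsections. The common starting point for all three items is as follows: take a sequence $(x_k,x_k^*) \to (x,x^*)$ in $\dom \Lambda = \gph N_C$ (converging from the direction $(u,u^*)$ in case (iii)). By Corollary~\ref{cor:inner_semicompactness}, metric subregularity of $M$ at $(x,0)$ gives, along a subsequence, multipliers $\lambda_k \in \Lambda(x_k,x_k^*)$ with $\norm{\lambda_k} \le \kappa' \norm{x_k^*}$; passing to a further subsequence we may assume $\lambda_k \to \lambda$ for some $\lambda$, and since $\gph N_Q$ is closed and $\beta$ is continuous, $\lambda \in N_Q(\varphi(x))$ and $\beta(x)^T\lambda = x^*$, i.e. $\lambda \in \Lambda(x,x^*)$. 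This already proves (i) once one checks that $(x,x^*) \in \dom\Lambda$ near the base point, which again follows from Corollary~\ref{cor:inner_semicompactness} together with the pre-image rule~\eqref{eq:pre-image}; Clarke regularity of $Q$ is used precisely to guarantee that the inclusion in~\eqref{eq:pre-image} is the relevant description of $N_C$, so that every $x_k^*$ does admit a multiplier.

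For (ii) and (iii) we must upgrade this to a \emph{calmness} estimate $\norm{\lambda_k - \lambda} \le \kappa\norm{(x_k,x_k^*)-(x,x^*)}$, and here the bound from Corollary~\ref{cor:inner_semicompactness} alone is not enough — it controls $\norm{\lambda_k}$, not $\norm{\lambda_k-\lambda}$. The idea is to pick a \emph{good} limit multiplier $\lambda$, not an arbitrary one. Apply Lemma~\ref{lemma:Red_lemma_ext} at $(\bar z,\bar z^*) = (\varphi(x),\lambda)$: on a neighbourhood $\mathcal O$ of $0$, $\gph N_Q$ near $(\varphi(x),\lambda)$ coincides with $\gph N_{\K}$ for the critical cone $\K = \K_Q(\varphi(x),\lambda)$, a \emph{polyhedral cone}. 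Since $\varphi$ is twice continuously differentiable, writing $z_k = \varphi(x_k)$ and linearising $\beta(x_k) = \beta(x) + O(\norm{x_k-x})$, the system ``$\lambda_k \in N_Q(z_k),\ \beta(x_k)^T\lambda_k = x_k^*$'' becomes, for $k$ large, a perturbation of the \emph{polyhedral} system ``$w_k \in N_{\K}(z_k-\varphi(x)),\ \beta(x)^T w_k = x_k^* - x^*$'' with perturbation terms of order $O(\norm{\lambda_k}\norm{x_k-x}) = O(\norm{(x_k,x_k^*)-(x,x^*)})$ in the right-hand sides. Thus the desired multiplier $\lambda_k - \lambda$ lies (approximately) in the image of a fixed polyhedral set-valued map evaluated at a point at distance $O(\norm{(x_k,x_k^*)-(x,x^*)})$ from the origin, and Theorem~\ref{the:2-sidedCalmnessPolyhedralMaps} (inner calmness$^*$ of polyhedral maps, with a uniform constant) produces a choice of $\lambda_k - \lambda$ of the required size. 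The fuzziness in (ii) enters here: $\lambda$ depends on the subsequence, so for a \emph{given} unit direction $v = (u,u^*) \in T_{\gph N_C}(x,x^*)$ we only obtain \emph{some} sequence along which the estimate holds with some constant $\kappa_v$ — exactly the statement of fuzzy inner calmness$^*$ in direction $v$. In case (iii), the non-degeneracy condition~\eqref{eq:DirNondeg} forces the linear map $w \mapsto \beta(x)^T w$ restricted to $\Span N_{T_Q(\varphi(x))}(\nabla\varphi(x)u) \supset \Span N_{\K}(\nabla\varphi(x)u)$ — and hence, via the reduction lemma applied along the direction, to the relevant lineality spaces of the critical cones of $\K$ — to be injective, which removes the ambiguity in the choice of $\lambda$ and promotes the fuzzy estimate to a genuine (directional) inner calmness$^*$ estimate with a single well-defined limit.

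The main obstacle, and the step requiring the most care, is the \emph{linearisation/perturbation argument} that turns the nonlinear, twice-differentiable system defining $\Lambda$ into a genuinely polyhedral one with controllable perturbation: one must check that all the error terms arising from replacing $\varphi(x_k)$, $\beta(x_k)$, and $N_Q$ near $\varphi(x_k)$ by their linearised/critical-cone counterparts are indeed $O(\norm{(x_k,x_k^*)-(x,x^*)})$ uniformly, that $z_k - \varphi(x)$ and $\lambda_k - \lambda$ eventually lie in the neighbourhood $\mathcal O$ from Lemma~\ref{lemma:Red_lemma_ext} (which uses $\lambda_k \to \lambda$ and $z_k \to \varphi(x)$), and — most delicately in (iii) — that the directional non-degeneracy~\eqref{eq:DirNondeg} genuinely yields the injectivity needed to pin down $\lambda$, since the critical cone along the direction, rather than $\K$ itself, is what governs the limiting behaviour. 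The uniform polyhedral constant from Theorem~\ref{the:2-sidedCalmnessPolyhedralMaps} is what makes the whole estimate independent of $k$ and of the particular subsequence.
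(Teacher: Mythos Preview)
Your treatment of~(i) is essentially the paper's argument. The difficulties lie in~(ii) and~(iii), where your sketch has two genuine gaps.

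\medskip

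\textbf{Part (ii): the polyhedral step and the source of ``fuzziness''.}
Your plan is to feed the linearised system into Theorem~\ref{the:2-sidedCalmnessPolyhedralMaps} and read off a small $w_k = \lambda_k - \lambda$. But Theorem~\ref{the:2-sidedCalmnessPolyhedralMaps} only asserts inner calmness$^*$: for the polyhedral map $\Lambda_0(z,z^*) := \{w \in N_{\K}(z) \mid \beta(x)^T w = z^*\}$ at the origin it gives, along a subsequence, some $\bar w$ and $w_k \in \Lambda_0(\cdot)$ with $\norm{w_k - \bar w} \le \kappa\,O(t_k)$. Nothing forces $\bar w = 0$; in general $\bar w$ can be any element of $\Lambda_0(0,0) = \K^\circ \cap \ker\beta(x)^T$. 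If $\bar w \ne 0$ then $w_k$ is not small, and the reduction lemma (which is only valid on a neighbourhood $\mathcal O$ of $0$) no longer lets you conclude $\lambda + w_k \in N_Q(\varphi(x_k))$. The paper circumvents this by a different device: after observing via Lemma~\ref{lemma:Red_lemma_ext} that $(\tilde\lambda_k - \lambda)/t_k$ lies in $\K^\circ \cap (\Span\F)^\perp$ for a \emph{fixed} face $\F$ (finitely many faces, pass to a subsequence), it applies \emph{Hoffman's lemma} to the convex polyhedral system $\{w \in \K^\circ \cap (\Span\F)^\perp : \beta(x)^T w = b\}$ to produce a bounded $\eta_k$ with the same image under $\beta(x)^T$. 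Then $\lambda_k := \lambda + t_k\eta_k$ is genuinely close to $\lambda$, so the reduction lemma does apply and yields $\lambda_k \in N_Q(\varphi(x_k))$. Your appeal to Theorem~\ref{the:2-sidedCalmnessPolyhedralMaps} could be repaired by observing that each convex polyhedral component of $\gph\Lambda_0$ is a \emph{cone} through the origin (hence Walkup--Wets gives inner calmness at $0$, not merely inner calmness$^*$), but this is an extra argument you have not supplied.

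Moreover, your diagnosis of why only \emph{fuzzy} inner calmness$^*$ results is off. It is not that ``$\lambda$ depends on the subsequence''. The point is that the constructed $\lambda_k$ satisfies $\beta(x_k)^T\lambda_k = \tilde x_k^*$ with $\tilde x_k^* = x_k^* + o(t_k)$ --- the linearisation error does not vanish --- so $\lambda_k \in \Lambda(x_k,\tilde x_k^*)$, not $\Lambda(x_k,x_k^*)$. Since $(x_k,\tilde x_k^*)$ still converges to $(x,x^*)$ from the same direction $(u,u^*)$, this is precisely fuzzy inner calmness$^*$ in that direction. Even with a unique $\lambda$ the argument would remain fuzzy for this reason.

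\medskip

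\textbf{Part (iii): what directional non-degeneracy actually buys.}
You claim~\eqref{eq:DirNondeg} ``removes the ambiguity in the choice of $\lambda$'', i.e.\ forces the multiplier to be unique. This is false: directional non-degeneracy concerns only $\Span N_{T_Q(\varphi(x))}(\nabla\varphi(x)u)$, which for $u\ne 0$ is generally a strict subspace of $\Span N_Q(\varphi(x))$, so $\Lambda(x,x^*)$ may still contain several points (Example~\ref{Ex} is a case in point). The paper's mechanism is different: keep both the original multiplier $\tilde\lambda_k \in \Lambda(x_k,x_k^*)$ from inner semicompactness and the constructed $\lambda_k \in \Lambda(x_k,\tilde x_k^*)$ from~(ii), and observe that $\tilde\lambda_k - \lambda_k \in \Span N_Q(\varphi(x_k))$ with $\beta(x_k)^T(\tilde\lambda_k - \lambda_k) = x_k^* - \tilde x_k^* = o(t_k)$. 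Directional non-degeneracy (via \cite[Proposition~2.14]{BeGfrOut18}) then yields $\norm{\tilde\lambda_k - \lambda_k} = o(t_k)$, so the \emph{original} $\tilde\lambda_k$, which lies in $\Lambda(x_k,x_k^*)$ exactly, inherits the calmness estimate from $\lambda_k$. That is what upgrades fuzzy to genuine inner calmness$^*$ in direction $(u,u^*)$.
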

\begin{proof}
 The first claim follows from Corollary
\ref{cor:inner_semicompactness} once we justify
 the use of $\dom \Lambda$ instead of $\gph N_{C}$.
 Due to the Clarke regularity, however, we
 obtain $N_{Q}(\varphi(x^{\prime}))=\widehat{N}_{Q}(\varphi(x^{\prime}))$ for $x^{\prime}$ near $x$
 and \cite[Theorem 6.14]{RoWe98} then yields that
 \[\beta(x^{\prime})^T N_Q(\varphi(x^{\prime})) = \beta(x^{\prime})^T \widehat{N}_Q(\varphi(x^{\prime})) \subset N_C(x^{\prime}).\]
 Moreover, the opposite inclusion is also true, taking into account
 \eqref{eq:pre-image} and the fact that $M$ is subregular also at $(x^{\prime},0)$
 as argued in the proof of Corollary \ref{cor:inner_semicompactness}.
 Hence, locally around $(x,x^*)$, one has $\dom \Lambda = \gph N_{C}$.
 
 To show the second one, assume that $Q$ is convex polyhedral
 and consider a sequence $(x_k,x_k^*) \in \dom \Lambda$
 converging to $(x,x^*)$ from a direction $(u,u^*) \in \Sp$,
 i.e.,
 \[(x_k,x_k^*) = (x,x^*) + t_k (u_k,u_k^*)\]
 for some $t_k \downarrow 0$ and $(u_k,u_k^*) \to (u,u^*)$.
 The inner semicompactness of $\Lambda$ wrt $\dom \Lambda$
 from (i) yields the existence of
 $\tilde \lambda_k \in N_Q(\varphi(x + t_k u_k))$ with
 \begin{equation*}\label{eq:seq}
  x^* + t_k u_k^* = \beta(x + t_k u_k)^T \tilde \lambda_k
 \end{equation*}
 as well as $\lambda \in N_Q(\varphi(x))$ with
 $x^*= \beta(x)^T \lambda$ such that $\tilde \lambda_k \to \lambda$.
 Moreover, the Taylor expansion gives
 \begin{equation}\label{eq : TayExp}
  \beta(x)^T \frac{\tilde \lambda_k - \lambda}{t_k} = u_k^* - \nabla \skalp{\lambda,\beta}(x)u + o(1).
 \end{equation}
 
 Employing Lemma \ref{lemma:Red_lemma_ext}
 and denoting $\K := \K_Q(\varphi(x),\lambda)$
 we infer
 \begin{equation}\label{eq:red_lemma_out}
   \varphi(x + t_k u_k) - \varphi(x) \in \K, \ \
 \tilde \lambda_k - \lambda \in \K^{\circ}, \ \
 \skalp{\varphi(x + t_k u_k) - \varphi(x), \tilde \lambda_k - \lambda} = 0
 \end{equation}
 for sufficiently large $k$.
 Thus, $\F_k := \K \cap [\tilde \lambda_k - \lambda]^{\perp}$
 is a {\em face} of the critical cone $\K$, see \cite[p. 258]{DoRo14}, with
 $\varphi(x + t_k u_k) - \varphi(x) \in \F_k$.
 Since there are only finitely many faces of a polyhedral cone, we may assume that $\F_k \equiv \F$ and, in particular,
 we have
 \[(\tilde \lambda_k - \lambda) / t_k \in \K^{\circ} \cap (\Span \F)^{\perp}.\]
 \if{
 The polyhedrality of $Q$ yields the polyhedrality
 of $\gph N_Q$ and hence
 \[\left(\frac{\varphi(x + t_k u_k) - \varphi(x)}{t_k}, \frac{\tilde \lambda_k - \lambda}{t_k} \right) \in T_{\gph N_Q}(\varphi(x),\lambda) = \gph N_{\K_Q(\varphi(x),\lambda)},\]
 by the reduction lemma formula \eqref{EqTangConeGraphNormalCone}.
 This means that
 \begin{eqnarray*}
 &&(\varphi(x + t_k u_k) - \varphi(x))/t_k \in \K_Q(\varphi(x),\lambda), \
 (\tilde \lambda_k - \lambda) / t_k \in \big(\K_Q(\varphi(x),\lambda)\big)^{\circ}, \\
 && \skalp{\varphi(x + t_k u_k) - \varphi(x), (\tilde \lambda_k - \lambda) / t_k} = 0.
 \end{eqnarray*}
 Hence, for every $k$ there exists a face $\F_k := \K_Q(\varphi(x),\lambda) \cap
 [(\tilde \lambda_k - \lambda) / t_k]^{\perp}$ of the critical cone with
 $\varphi(x + t_k u_k) - \varphi(x) \in \F_k$ and, due to the finiteness
 of faces of a polyhedral cone, we may assume that $\F_k \equiv \F$
 with
 \[(\tilde \lambda_k - \lambda) / t_k \in (\K_Q(\varphi(x),\lambda))^{\circ} \cap (\Span \F)^{\perp}.\]
 }\fi
 
 Hence, we can now invoke Hoffman's lemma \cite[Theorem 2.200]{BonSh00} to find for every $k$ some $\eta_k\in \K^{\circ} \cap (\Span \F)^{\perp}$ satisfying
\[\beta(x)^T\eta_k=\beta(x)^T\frac{\tilde\lambda_k-\lambda}{t_k}\]
 and $\norm{\eta_k} \leq \alpha\norm{\beta(x)^T(\tilde\lambda_k-\lambda)/{t_k}}$
for some constant $\alpha > 0$ not depending on $k$. Since the right-hand side of \eqref{eq : TayExp} is bounded, so is $\eta_k$ and by possibly passing to a subsequence we can assume that $\eta_k$ converges to some $\eta \in \K^{\circ} \cap (\Span \F)^{\perp}$ satisfying
\[\beta(x)^T\eta= u^* - \nabla \skalp{\lambda,\beta}(x)u\]
by \eqref{eq : TayExp}.
Since $\eta_k \in \K^{\circ} \cap (\Span \F)^{\perp}$
we see that \eqref{eq:red_lemma_out} remains true
with $\tilde \lambda_k - \lambda$ replaced by $t_k \eta_k$.
Consequently, Lemma \ref{lemma:Red_lemma_ext} yields that
for sufficiently large $k$ we have
\[(\varphi(x + t_k u_k),\lambda + t_k \eta_k) = (\varphi(x),\lambda)
+ (\varphi(x + t_k u_k) - \varphi(x), t_k \eta_k) \in \gph N_{Q}.\]
Hence, setting $\lambda_k := \lambda + t_k \eta_k$,
for any $\kappa_{(u,u^*)} > \norm{\eta}$ we get
\begin{equation}\label{eq:icEstProof}
\norm{\lambda_k - \lambda} = \norm{\eta_k} t_k =
\frac{\norm{\eta_k}}{\norm{(u_k,u_k^*)}} \norm{(x_k,x_k^*) - (x,x^*)}
\leq \kappa_{(u,u^*)} \norm{(x_k,x_k^*) - (x,x^*)}
\end{equation}
for $k$ large enough, since $\norm{(u_k,u_k^*)} \to 1$ and $\norm{\eta_k} \to \norm{\eta}$.

Finally, denoting
\[\tilde x_k^* := \beta(x + t_k u_k)^T \lambda_k =
\beta(x)^T \lambda + t_k(\beta(x)^T \eta_k + \nabla \skalp{\lambda,\beta}(x)u) + o(t_k)
= x^* + t_k u_k^* + o(t_k) = x_k^* + o(t_k)\]
yields, in particular, that $(x_k,\tilde x_k^*)$ converges to $(x,x^*)$ from direction $(u,u^*)$.
Hence, the inner calmness* of $\Lambda$ in direction $(u,u^*)$ in the fuzzy sense
with the modulus not greater than $\norm{\eta}$ follows.
Since direction $(u,u^*)$ was arbitrary, Lemma \ref{Lem:DirVsStand}
yields the inner calmness* in the fuzzy sense of $\Lambda$.

The third statement can now be shown easily.
If $x_k=x$ for infinitely many $k$,
in the previous argument we actually get
\[\beta(x)^T\eta_k=\beta(x)^T\frac{\tilde\lambda_k-\lambda}{t_k} = u_k^*.\]
Then, however, we obtain
\[\beta(x)^T\lambda_k = x^* + t_k u_k^* = x_k^*,\]
which means that the new multipliers $\lambda_k$ correspond to the same $x_k^*$
and this implies the inner calmness* in direction $(u,u^*)$.

On the other hand, if $x_k \neq x$ for infinitely many $k$,
we conduct the same arguments as in case (ii)
and then show that we can in fact use the original multipliers $\tilde \lambda_k$.
To this end, note that
$\tilde \lambda_k - \lambda_k \in \Span N_Q(\varphi(x + t_k u_k))$
and thus \cite[Proposition 2.14]{BeGfrOut18} says that the assumed directional non-degeneracy yields the existence of $\alpha > 0$ such that
\begin{equation*}
    \norm{\tilde \lambda_k - \lambda_k} \leq
    \alpha \norm{\beta(x + t_k u_k)^T(\tilde \lambda_k - \lambda_k)} =
    \alpha \norm{x_k^* - \tilde x_k^*} = o(t_k).
\end{equation*}
Consequently, we obtain
\begin{eqnarray*}
    \norm{\tilde \lambda_k - \lambda} \leq
    \norm{\lambda_k - \lambda} + \norm{\tilde \lambda_k - \lambda_k} \leq
    \norm{\eta_k} t_k  + o(t_k) = (\norm{\eta_k} + o(1)) \, t_k
\end{eqnarray*}
and we infer that $\tilde \lambda_k$ also satisfies the inner calmness* estimate
\eqref{eq:icEstProof} for any $\kappa_{(u,u^*)} > \norm{\eta}$ due to $\norm{\eta_k} + o(1) \to \norm{\eta}$.
\end{proof}
\begin{remark}\label{Rem:Main}
 Let us briefly comment on the above results.
 \begin{enumerate}
  \item We only used the fact that $\beta(x) := \nabla \varphi(x)$
 in the proof of (iii).
 More precisely, we only used the feature of non-degeneracy
 from \cite[Proposition 2.14]{BeGfrOut18}.
 Looking into the proof of this proposition, however, it is clear
 that the result remains valid under assumption \eqref{eq:DirNondeg}
 even if $\beta(x) \neq \nabla \varphi(x)$.
 In such case, it is not appropriate to call this assumption
 non-degeneracy of $\varphi(\cdot) \in Q$, naturally.
 
 This means that for arbitrary sufficiently smooth
 function $\beta(x)$ it holds that $\Lambda(x,x^*)$
 is inner calm* at $(x,x^*)$ wrt its domain in the fuzzy sense, provided
 $\Lambda$ is inner semicompact at $(x,x^*)$ wrt $\gph N_{C}$
 and $Q$ is convex polyhedral and it is even inner calm* in direction
 $(u,u^*)$ if \eqref{eq:DirNondeg} holds as well.
 This enables us to handle the parametrized setting
 in Section 5 with ease.
 \item As already mentioned, in order to replace the inner semicompactness in (i)
 by local boundedness, one has
 to impose a stronger assumption, such as GMFCQ.
 Moreover, in order to obtain analogous results in terms of
 inner semicontinuity and inner calmness,
 one can, e.g., assume uniqueness of the multiplier.
 In turn, one needs to strengthen the assumption even more
 and require, say, the standard non-degeneracy from \cite[Formula 4.17]{BonSh00},
 which corresponds to \eqref{eq:DirNondeg} with $u = 0$.
 Directional non-degeneracy was introduced in \cite[Definition 2.13]{BeGfrOut18}
 and it is strictly milder than the standard one, see the example below.
 \item If $(u,u^*)=(0,u^*) \in \Sp$ in (iii), \eqref{eq:DirNondeg}
 becomes the standard non-degeneracy assumption.
 Since $u^* \in DN_{C}(x,x^*)(u)$, such directions exist if and only if
 the normal cone mapping $N_C$ is not isolatedly calm at $(x,x^*)$.
 \item Finally, note that our proof follows the arguments
 from the proof of \cite[Theorem 5.3]{BeGfrOut18}.
 \end{enumerate}
\end{remark}

The following example comes from \cite[Example 2.15]{BeGfrOut18}.
\begin{example}\label{Ex}
Consider the constraint system $\varphi(x) \leq 0$ given by
\[x_1 - x_4 \leq 0,\ -x_1 - x_4 \leq 0,\ x_2 - x_4 \leq 0,\ -x_2 - x_4 \leq 0,\
x_3 + x_1^2 - x_4 \leq 0,\ -x_3 - x_4 \leq 0\]
and the point $\bar x=0$.
We have $\ker \nabla\varphi(\xb)^T = \R(1,1,0,0,-1,-1)^T + \R(0,0,1,1,-1,-1)^T$.
Denoting $Q:=\R_{-}^6$ yields $N_{Q}(\varphi(\xb))=\R_{+}^6$
and so $\Span N_{Q}(\varphi(\xb))=\R^6$,
implying that the system is degenerate.
Moreover, the so-called {\em constant rank constraint qualification}
is clearly also violated at $\xb$.

On the other hand, MFCQ holds $\xb$, as can be seen from
$\ker \nabla\varphi(\xb)^T \cap N_{Q}(\varphi(\xb)) = \{0\}$.
More interestingly, however, let us explain that the system is actually non-degenerate
in every non-zero direction.
Given a direction $u$ with $\nabla \varphi(\xb) u \leq 0$
and denoting $\mathcal{I}(u) := \{i \mv \nabla \varphi_i(\xb)u = 0\}$, we have
$\Span N_{T_{Q}(\varphi(x))}(\nabla \varphi(x)u) =
\{\lambda \in \R^6 \mv \lambda_i = 0, i \notin \mathcal{I}(u)\}$.
Thus, non-degeneracy in direction $u$ is equivalent to the linear independence of the gradients $\nabla \varphi_i(\xb),\, i \in \mathcal{I}(u)$.
For $u \neq 0$, $\nabla \varphi(\xb) u \leq 0$ readily implies that $u_4 > 0$,
since $u_4 < 0$ is not possible and $u_4 = 0$ enforces $u = 0$.
Hence, $\mathcal{I}(u)$ can never contain $1$ and $2$ simultaneously,
$3$ and $4$ simultaneously or $5$ and $6$ simultaneously,
which ensures the linear independence of $\nabla \varphi_i(\xb),\, i \in \mathcal{I}(u)$.

Consequently, the above theorem yields that
$\Lambda$ is inner calm* at $(\xb,\xb^*)$ wrt its domain
for any $\xb^* \in N_{\varphi^{-1}(Q)}(\xb)$.
\end{example}

\section{Selected calculus rules}

In this section, we show some new calculus
rules based on (fuzzy) inner calmness*
with focus on the primal objects
(tangents and graphical derivatives).

\subsection{Tangents and directional normals to image sets}

In this section, we deal with the rules for image sets,
which provide the base for all of the remaining calculus.
Given a closed set $C \subset \R^n$ and a continuous mapping $\varphi: \R^n \to \R^l$,
set $Q := \varphi(C)$ and consider $\yb \in Q$.
Moreover, let $\Psi : \R^l \rightrightarrows \R^n$ be given by
$\Psi(y) := \varphi^{-1}(y) \cap C$ and note that
$\dom \Psi = Q$ and $\gph \Psi = \gph \varphi^{-1} \cap (\R^l \times C)$.
Hence $\gph \Psi$
is closed by the properties of $\varphi$ and $C$
and, recalling Lemma \ref{Lem : iscomp_dom_closed},
we suppose that $\Psi$ is also inner semicompact at $\yb$
wrt $\dom \Psi$.

\begin{theorem}[Tangents to image sets]\label{The : ImageSet_Tangents}
  If $\varphi: \R^n \to \R^l$ is calm at some $\xb \in \Psi(\yb)$, we have
  \begin{equation*}
   T_C(\xb) \subset \{ u \mv \exists\, v \in T_Q(\yb) \textrm{ with } v \in D\varphi(\xb)u \}.
  \end{equation*}
  On the other hand, if $\Psi$ is inner calm* at $\yb$
  wrt $\dom \Psi$ in the fuzzy sense, then
  \begin{equation*}
   T_Q(\yb) \subset \bigcup_{\bar x \in \Psi(\bar y)} \{v \mv \exists\, u \in T_C(\xb) \textrm{ with } v \in D\varphi(\xb)u \}
  \end{equation*}
  and, moreover, if there exists $\xb \in \Psi(\yb)$ such that $\Psi$ is inner calm at $(\yb,\xb)$ wrt $\dom \Psi$,
  then the estimate holds with this $\xb$, i.e., the union over $\Psi(\yb)$ is superfluous.
\end{theorem}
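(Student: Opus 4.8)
The plan is to prove all three inclusions by the same scheme: realize the relevant tangent direction by a sequence, invoke the hypothesis to obtain a matching sequence on the other side together with a controlled rate of convergence, rescale the resulting difference quotients so that they stay bounded, extract a convergent subsequence, and read off the limiting pair as an element of $T_{\gph\varphi}$, i.e.\ of $D\varphi(\xb)$. Everything rests only on the closedness of $\gph\varphi$ (which holds since $\varphi$ is continuous) and of $C$, and on the fact that $T_C(\xb)$ and $D\varphi(\xb)$ are cones, so that a witnessing pair may be freely rescaled by a nonnegative factor. For the first inclusion I would fix $\xb\in\Psi(\yb)$ at which $\varphi$ is calm, say with constant $\kappa$, pick $u\in T_C(\xb)$ realized by $x_k:=\xb+t_ku_k\in C$ with $t_k\downarrow0$ and $u_k\to u$, and set $y_k:=\varphi(x_k)\in Q$. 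Calmness gives $\norm{y_k-\yb}\leq\kappa t_k\norm{u_k}$ for large $k$, so the quotients $v_k:=(y_k-\yb)/t_k$ are bounded; passing to a subsequence, $v_k\to v$, and then $\yb+t_kv_k=y_k\in Q$ gives $v\in T_Q(\yb)$ while $(x_k,y_k)=(\xb,\yb)+t_k(u_k,v_k)\in\gph\varphi$ gives $(u,v)\in T_{\gph\varphi}(\xb,\yb)$, i.e.\ $v\in D\varphi(\xb)u$.

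For the second (and central) inclusion, take $v\in T_Q(\yb)$; the case $v=0$ is immediate, taking any $\xb\in\Psi(\yb)$ --- nonempty since $\yb\in Q$ --- and $u=0$. For $v\neq0$ I would pass to $\tilde v:=v/\norm v\in\Sp$, which lies in $T_Q(\yb)=T_{\dom\Psi}(\yb)$ since this cone is positively homogeneous. The hypothesis that $\Psi$ is inner calm* at $\yb$ wrt $\dom\Psi$ in the fuzzy sense, applied in direction $\tilde v$, then yields a constant $\kappa$, a sequence $Q\ni y_k=\yb+s_kw_k$ with $s_k\downarrow0$ and $w_k\to\tilde v$, points $x_k\in\Psi(y_k)$, and a point $\xb\in\R^n$ with $\norm{x_k-\xb}\leq\kappa s_k\norm{w_k}$. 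Since $x_k\to\xb$ with $x_k\in C$ and $\varphi(x_k)=y_k\to\yb$ with $\varphi$ continuous, one gets $\xb\in\Psi(\yb)$; the quotients $(x_k-\xb)/s_k$ are bounded, hence, along a subsequence, converge to some $u_0\in T_C(\xb)$, and $(x_k,\varphi(x_k))=(\xb,\yb)+s_k\big((x_k-\xb)/s_k,\,w_k\big)\in\gph\varphi$ gives $(u_0,\tilde v)\in T_{\gph\varphi}(\xb,\yb)$. Rescaling, $u:=\norm v\,u_0\in T_C(\xb)$ and $v\in D\varphi(\xb)u$ with $\xb\in\Psi(\yb)$, as claimed.

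For the third inclusion, if $\Psi$ is inner calm at $(\yb,\xb)$ wrt $\dom\Psi$ with constant $\kappa$, I would argue as in the first paragraph but in the opposite direction: realize $v\in T_Q(\yb)$ by $y_k:=\yb+t_kv_k\in Q=\dom\Psi$ with $t_k\downarrow0$, $v_k\to v$; inner calmness produces $x_k\in\Psi(y_k)$ with $\norm{x_k-\xb}\leq\kappa t_k\norm{v_k}$ for large $k$, so $(x_k-\xb)/t_k$ is bounded and converges along a subsequence to some $u\in T_C(\xb)$, and $(x_k,\varphi(x_k))=(\xb,\yb)+t_k\big((x_k-\xb)/t_k,\,v_k\big)\in\gph\varphi$ yields $(u,v)\in T_{\gph\varphi}(\xb,\yb)$, i.e.\ $v\in D\varphi(\xb)u$ with this \emph{fixed} $\xb$. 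The only genuinely delicate point is the second inclusion: fuzzy inner calmness* does not let one control an arbitrary sequence realizing $v\in T_Q(\yb)$, it guarantees only the existence of one suitable sequence per prescribed direction. This suffices precisely because membership in the right-hand side is itself an existential statement and $T_Q(\yb)$ is determined direction by direction --- one only has to remember to rescale back from the unit direction $\tilde v$ to $v$ at the end, which is legitimate exactly because $T_C(\xb)$ and $D\varphi(\xb)$ are cones.
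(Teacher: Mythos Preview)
Your proof is correct and follows essentially the same route as the paper: realize the tangent direction by a sequence, use the relevant calmness hypothesis to bound the companion difference quotients, extract a limit, and read off membership in $T_{\gph\varphi}(\xb,\yb)$. The only cosmetic difference is that you normalize $v$ to a unit direction before invoking fuzzy inner calmness* (and treat $v=0$ separately), then rescale back via the cone property of $T_C(\xb)$ and $D\varphi(\xb)$; the paper instead applies the directional property directly to $v$, implicitly using that ``converging from direction $v$'' is scale-invariant, so fuzzy inner calmness* in direction $v/\norm{v}$ automatically gives it in direction $v$.
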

\begin{proof}
 Pick $u \in T_C(\xb)$. We will show a slightly stronger statement,
 namely that $D\varphi(\xb)u \cap T_Q(\yb) \neq \emptyset$,
 provided $\varphi$ is calm at $\xb$ {\em in direction} $u$.
 Indeed, consider $t_k \downarrow 0$ and $u_k \to u$ with
 $\xb + t_k u_k \in C$ and observe that
 \begin{equation} \label{eq_Q}
 Q \ni \varphi(\xb + t_k u_k) = \bar y + t_k v_k,
 \end{equation}
 where $v_k := (\varphi(\xb + t_k u_k) - \bar y)/t_k$
 is bounded by the assumed calmness in direction $u$
 and we may assume that $v_k \to v$ for some $v \in T_Q(\yb)$.
 Moreover, \eqref{eq_Q} can be written as
 $(\xb + t_k u_k,\bar y + t_k v_k) \in \gph \varphi$,
 showing also $v \in D\varphi(\xb)u$.
 
 Consider now $v \in T_Q(\yb)$.
 The inner calmness* of $\Psi$ at $\bar y$ wrt
 $\dom \Psi=Q$ in the fuzzy sense
 {\em in direction} $v$ yields the existence of
 $t_k \downarrow 0$, $v_k \to v$ with $\yb + t_k v_k \in Q$
 as well as $x_k \in C$ and $\xb$ with
 \[\bar y + t_k v_k = \varphi(x_k) \ \textrm{ and } \
 \norm{x_k - \xb} \leq \kappa_v t_k \norm{v_k}\]
 for some $\kappa_v > 0$.
 This means, however, that
 $(\xb + t_k u_k, \yb + t_k v_k) \in \gph \varphi$
 for the bounded sequence $u_k := (x_k - \xb)/t_k$
 and the existence of $u \in T_C(\xb)$ with
 $v \in D\varphi(\xb)u$ follows.
 Finally, the closedness of $C$ and the
 continuity of $\varphi$ imply $\bar x \in \Psi(\bar y)$.
 
 The last statement now follows easily
 from the definition of inner calmness.
\end{proof}

If $\varphi$ is differentiable, $D\varphi(\xb)u = \{\nabla\varphi(\xb) u\}$
is a singleton and we obtain the following simpler estimates.

\begin{corollary}[Tangents to image sets - differentiable case]\label{Cor:T_I_S_dif}
 If $\varphi$ is continuously differentiable, then
 \[T_Q(\bar y) \supset \bigcup_{\bar x \in \Psi(\bar y)}
 \nabla \varphi(\bar x) T_C(\bar x).\]
 The above inclusion becomes equality if $\Psi$
 is inner calm* at $\yb$ wrt $\dom \Psi$ in the fuzzy sense.
\end{corollary}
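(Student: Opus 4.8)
The plan is to read off both inclusions directly from Theorem~\ref{The : ImageSet_Tangents}, once two elementary specializations are in place. First I would note that a continuously differentiable $\varphi$ is Lipschitz continuous near every point, hence calm at every $\xb \in \Psi(\yb)$; thus the calmness hypothesis in the first part of Theorem~\ref{The : ImageSet_Tangents} is automatically met here. Second, differentiability of $\varphi$ gives $D\varphi(\xb)u = \{\nabla\varphi(\xb)u\}$ for every $u$, so that for each $\xb \in \Psi(\yb)$
\[\{v \mv \exists\, u \in T_C(\xb) \textrm{ with } v \in D\varphi(\xb)u\} = \nabla\varphi(\xb)\,T_C(\xb).\]
The standing assumptions of the subsection (closedness of $\gph\Psi$ and inner semicompactness of $\Psi$ at $\yb$ wrt $\dom\Psi = Q$) carry over unchanged, so Theorem~\ref{The : ImageSet_Tangents} is applicable throughout.

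For the inclusion ``$\supset$'', fix $\xb \in \Psi(\yb)$ and $u \in T_C(\xb)$. The first part of Theorem~\ref{The : ImageSet_Tangents} yields $D\varphi(\xb)u \cap T_Q(\yb) \neq \emptyset$, i.e.\ $\nabla\varphi(\xb)u \in T_Q(\yb)$; hence $\nabla\varphi(\xb)\,T_C(\xb) \subset T_Q(\yb)$, and taking the union over $\xb \in \Psi(\yb)$ gives the asserted inclusion. (This is of course also the classical estimate \cite[Theorem~6.43]{RoWe98}, which one could alternatively just quote.)

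Finally, assuming $\Psi$ is inner calm* at $\yb$ wrt $\dom\Psi$ in the fuzzy sense, the second part of Theorem~\ref{The : ImageSet_Tangents} gives
\[T_Q(\yb) \subset \bigcup_{\xb \in \Psi(\yb)} \{v \mv \exists\, u \in T_C(\xb) \textrm{ with } v \in D\varphi(\xb)u\} = \bigcup_{\xb \in \Psi(\yb)} \nabla\varphi(\xb)\,T_C(\xb),\]
using the identity from the first step; combined with ``$\supset$'' this is the claimed equality. I do not anticipate a genuine obstacle, the corollary being a pure specialization; the only points worth stating carefully are that $D\varphi(\xb)u = \{\nabla\varphi(\xb)u\}$ really exploits the $C^1$ (or at least strict differentiability) hypothesis rather than mere differentiability, and that the fuzzy inner calmness* enters exactly in the direction-by-direction form in which Theorem~\ref{The : ImageSet_Tangents} consumes it.
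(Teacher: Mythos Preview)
Your proposal is correct and matches the paper's approach exactly: the paper simply remarks that for differentiable $\varphi$ one has $D\varphi(\xb)u=\{\nabla\varphi(\xb)u\}$, so the corollary follows immediately from Theorem~\ref{The : ImageSet_Tangents}. Your write-up just makes explicit the two ingredients (calmness from $C^1$, and the singleton graphical derivative) that the paper leaves to the reader; the only quibble is that mere differentiability already gives $D\varphi(\xb)u=\{\nabla\varphi(\xb)u\}$, while the $C^1$ hypothesis is what secures calmness at every $\xb\in\Psi(\yb)$.
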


In \cite[Theorem 6.43]{RoWe98}, the upper estimate
for $T_Q(\yb)$ is obtained only in quite a special setting,
namely for $C$ convex and $\varphi$ linear.
As the previous section shows, our approach is
applicable in a much broader context.
On the other hand, our result also does not cover
the estimate from \cite[Theorem 6.43]{RoWe98},
which reads
\begin{equation}\label{eq:TC_forward_Convex}
T_Q(\yb) = \cl \big( \varphi(T_C(\xb)) \big)
\textrm{ for every } \xb \in \varphi^{-1}(\yb) \cap C.
\end{equation}
To see this, consider again the mapping $S$
from Example \ref{Ex_isc_violated} and set $C=\gph S$, $Q = \dom S$,
i.e., $Q=\varphi(C)$, where $\varphi$ is just the projection:
$\varphi(x)=y$ for $x=(y,z) \in \R^2 \times \R$.
While this setting fits into \cite[Theorem 6.43]{RoWe98},
Corollary \ref{Cor:T_I_S_dif} cannot be applied.
Indeed, among the standing assumptions,
we ask $\Psi$ to be inner semicompact at $\yb$,
but it is not since $\Psi(y) = \{y\} \times S(y)$ and $S$
is not inner semicompact at $\yb$, as clarified in Example \ref{Ex_isc_violated}.

Let us look more closely into the two approaches.
Since it is not essential for our argument, however, we omit the details.
For $\bar y = (1,0)$, we clearly have
$T_Q(\yb) = \R_- \times \R$, since $Q$ is the closed unit sphere.
On the other hand, one can show that $S(\yb)= \R_+$, and thus
$\Psi(\yb) = \varphi^{-1}(\yb) \cap C = \{\yb\} \times \R_+$,
and
\[
\varphi(T_C(\yb,\zb)) =
\{v \in \R^2 \mv \exists w \in \R: (v,w) \in T_C(\yb,\zb)\}
= \{(0,0)\} \cup \{v \in \R^2 \mv v_1 < 0\}
\]
for any $\zb \in \R_+$.
This shows the importance of the closure in \eqref{eq:TC_forward_Convex}.
Moreover, it also shows that the estimate from Corollary \ref{Cor:T_I_S_dif}
is actually false, since vectors $(0,\pm 1) \in T_Q(\yb)$
do not belong to $\nabla \varphi(\bar x) T_C(\bar x) = \varphi(T_C(\xb))$ for any $\xb=(\yb,\zb) \in \Psi(\yb)$.

Naturally, the estimates for directional normals to image sets
from \cite[Theorem 3.2]{BeGfrOut18a} can also be enriched
by the inner calmness* assumption.
The next theorem shows how these estimates differ
based on the assumption used.
Given a direction $v \in \R^l$, let us denote
\begin{equation*}
 \Sigma(v):= \{y^* \mv D^*\varphi(\xb;(u,v))(y^*) \cap N_C(\xb;u) \neq \emptyset\}.
\end{equation*}
Trivial modifications of the proof of \cite[Theorem 3.2]{BeGfrOut18a}
yield the following result.
\begin{theorem}[Directional normals to image sets] \label{The : ConstSetForw}
 Consider a direction $v \in \R^l$ and
 assume that $\varphi$ is Lipschitz continuous near every $\xb \in \Psi(\yb)$.
 Then
  \begin{itemize}
  \item[(i)] if $\Psi$ is inner semicompact at $\yb$ wrt $\dom \Psi$
  in direction $v$, one has
  \begin{equation*}
   N_Q(\yb;v) \subset \bigcup\limits_{\xb \in \Psi(\yb)} \Big(
   \bigcup\limits_{u \in D\Psi(\yb,\xb)(v)}
   \Sigma(v) \ \ \cup
   \bigcup\limits_{u \in D\Psi(\yb,\xb)(0) \cap \Sp}
   \Sigma(0) \Big)
  \end{equation*}
  and the union over $\Psi(\yb)$ is superfluous
  if $\Psi$ is inner semicontinuous at $(\yb,\xb)$
  wrt $\dom \Psi$ in $v$;
  \item[(ii)] if $\Psi$ is inner calm* at $\yb$ wrt $\dom \Psi$
  in $v$, one has
  \begin{equation*}
   N_Q(\yb;v) \subset \bigcup\limits_{\xb \in \Psi(\yb)} \
   \bigcup\limits_{u \in D\Psi(\yb,\xb)(v)}
   \{y^* \mv D^*\varphi(\xb;(u,v))(y^*) \cap N_C(\xb;u) \neq \emptyset\}
  \end{equation*}    
  and the union over $\Psi(\yb)$ is superfluous
  if $\Psi$ is inner calm at $(\yb,\xb)$
  wrt $\dom \Psi$ in $v$.
  \end{itemize}
\end{theorem}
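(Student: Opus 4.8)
The plan is to follow the proof of \cite[Theorem 3.2]{BeGfrOut18a} essentially verbatim, the only real change being the scaling bookkeeping needed to exploit the new (fuzzy) inner calmness-type hypotheses. Fix $y^* \in N_Q(\yb;v)$. By definition there are $t_k \downarrow 0$, $v_k \to v$ and $y_k^* \to y^*$ with $y_k^* \in \widehat N_Q(\yb + t_k v_k)$ for every $k$. Since $\yb + t_k v_k \in Q = \dom\Psi$ and this sequence converges to $\yb$ from direction $v$, the assumed inner property of $\Psi$ in direction $v$ yields — after passing to a subsequence, which I do tacitly throughout — points $x_k \in \Psi(\yb + t_k v_k)$, i.e. $x_k \in C$ and $\varphi(x_k) = \yb + t_k v_k$, converging to some $\xb$; closedness of $\gph\Psi$ forces $\xb \in \Psi(\yb)$. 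If $\Psi$ is only inner calm / inner semicontinuous at a prescribed $(\yb,\xb)$, this $\xb$ is fixed in advance and no union over $\Psi(\yb)$ is needed; in the inner calm* / inner semicompact cases the extracted $\xb$ may depend on the subsequence, whence the union.

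The core step, taken from \cite{BeGfrOut18a}, is the pull-back of the regular normals $y_k^*$ through the merely Lipschitz map $\varphi$: since $Q = \varphi(C)$, since $\varphi$ is Lipschitz near $\xb$, and since $y_k^* \in \widehat N_Q(\varphi(x_k))$, one obtains $x_k^* \in \widehat N_C(x_k)$ with $x_k^* \in \widehat D^*\varphi(x_k)(y_k^*)$, equivalently $(x_k^*,-y_k^*) \in \widehat N_{\gph\varphi}(x_k,\varphi(x_k))$. The Lipschitz modulus $L$ of $\varphi$ near $\xb$ also gives $\norm{x_k^*} \le L\norm{y_k^*}$, so along a further subsequence $x_k^* \to x^*$.

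Now the scaling. In case (ii), inner calmness* in direction $v$ supplies $\norm{x_k-\xb} \le \kappa_v t_k\norm{v_k}$, so $u_k := (x_k-\xb)/t_k$ is bounded and, along a subsequence, $u_k \to u$. Then $(x_k,\varphi(x_k)) = (\xb,\yb) + t_k(u_k,v_k)$ converges to $(\xb,\yb)$ from direction $(u,v)$, whence $(v,u) \in T_{\gph\Psi}(\yb,\xb)$, i.e. $u \in D\Psi(\yb,\xb)(v)$. Passing to the limit, $x_k \to \xb$ from direction $u$ together with $x_k^* \in \widehat N_C(x_k)$ gives $x^* \in N_C(\xb;u)$, while $(x_k^*,-y_k^*) \in \widehat N_{\gph\varphi}(x_k,\varphi(x_k))$ along direction $(u,v)$ gives $(x^*,-y^*) \in N_{\gph\varphi}(\xb,\yb;(u,v))$, i.e. $x^* \in D^*\varphi(\xb;(u,v))(y^*)$. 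Hence $x^* \in D^*\varphi(\xb;(u,v))(y^*) \cap N_C(\xb;u) \neq \emptyset$, which is exactly $y^*$ lying in the asserted set for this $u$; if $\Psi$ is inner calm at the fixed $(\yb,\xb)$, that $\xb$ was not chosen along a subsequence, so the union is superfluous. In case (i) there is no rate: put $r_k := \norm{x_k-\xb}$ (the degenerate possibility $r_k = 0$ along a subsequence reduces to the non-directional image rule at $\xb$ and is absorbed by the $v$-branch, so assume $r_k > 0$). Along a subsequence, either $r_k/t_k$ is bounded — then $(x_k-\xb)/t_k \to u \in D\Psi(\yb,\xb)(v)$ and one argues exactly as in case (ii), landing in the $\Sigma(v)$-term — or $r_k/t_k \to \infty$. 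In the latter case rescale by $r_k$: the unit vectors $(x_k-\xb)/r_k$ converge to some $u \in \Sp$, while $(\varphi(x_k)-\yb)/r_k = (t_k/r_k)v_k \to 0$; thus $(0,u) \in T_{\gph\Psi}(\yb,\xb)$, i.e. $u \in D\Psi(\yb,\xb)(0) \cap \Sp$, and $(x_k,\varphi(x_k))$ converges to $(\xb,\yb)$ from direction $(u,0)$ while $x_k \to \xb$ from direction $u$. Passing to the limit as before yields $x^* \in D^*\varphi(\xb;(u,0))(y^*) \cap N_C(\xb;u)$, i.e. $y^* \in \Sigma(0)$. Under inner semicontinuity at a fixed $(\yb,\xb)$ the union over $\Psi(\yb)$ is again unnecessary. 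This exhausts all cases.

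\textbf{Main obstacle.} The one delicate ingredient — and the only place where the non-smoothness of $\varphi$ really matters — is the pull-back step together with the consistent tracking of directions: one must verify that a regular normal to the image $Q$ at $\varphi(x_k)$ is matched by a regular normal to $C$ at $x_k$ belonging to $\widehat D^*\varphi(x_k)(y_k^*)$, and that the joint passage to the limit along the correctly identified direction upgrades $\widehat N_C$ and $\widehat N_{\gph\varphi}$ to the directional limiting cones $N_C(\xb;u)$ and $N_{\gph\varphi}(\xb,\yb;(u,v))$ with matching sign conventions. Everything else is a routine rescaling argument, which is why the modifications relative to \cite[Theorem 3.2]{BeGfrOut18a} are only cosmetic.
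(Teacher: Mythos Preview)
Your proposal is correct and matches the paper's approach exactly: the paper's own proof is the single sentence ``Trivial modifications of the proof of \cite[Theorem 3.2]{BeGfrOut18a} yield the following result,'' and you do precisely that, additionally spelling out the rescaling dichotomy (bounded $r_k/t_k$ versus $r_k/t_k\to\infty$) that distinguishes the $\Sigma(v)$-branch from the $\Sigma(0)$-branch in (i) and showing how inner calmness* collapses the second branch in (ii). Your identification of the pull-back of regular normals through the Lipschitz $\varphi$ as the one non-routine ingredient is apt; that step is handled in \cite{BeGfrOut18a} (typically via a fuzzy sum rule at nearby points rather than exactly at $x_k$, which is harmless in the limit), and since both you and the paper defer to that reference there is nothing further to verify here.
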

\noindent
In fact, $\varphi$ needs to be Lipschitz continuous only in
relevant directions, see \cite[Theorem 3.2]{BeGfrOut18a}.
The following example shows that inner calmness* in the fuzzy sense is not 
enough for (ii).

\begin{example}
Let $C \in \R^3$ be given by
\[C=\R_- \times \R \times \{0\} \ \cup \
\{x=(x_1,x_2,x_3) \mv x_1 > 0,\, x_2=\sqrt{x_1},\, x_3=\sqrt{x_2}\},\]
let $\varphi:\R^3 \to \R^2$ be given by
$\varphi(x)=(x_1,x_2)$ and set
\[Q:=\varphi(C) = \R_- \times \R \ \cup \
\{(x_1,x_2) \mv x_1 > 0,\, x_2=\sqrt{x_1}\}.\]
In order to be consistent with our notation, we use $y$ for $(x_1,x_2)$.
First, we claim that
\[\Psi(y)=\varphi^{-1}(y) \cap C=
\{x=(y,x_3) \mv x \in C\}\]
is not inner calm* at $\yb=(0,0)$ in direction $v=(0,1)$ wrt $Q$,
but in the fuzzy sense, it is.

Indeed, the sequence $Q \ni y^k = (1/k^2,1/k)=(0,0) + 1/k(1/k,1)$
converges to $\yb$ from $v$, but
$\Psi(y^k)=(1/k^2,1/k,1/\sqrt{k}) \to (0,0,0)=\Psi(\yb)=\xb$ and thus
\[\norm{\Psi(y^k) - \Psi(\yb)}/\norm{y^k - \yb} \geq \sqrt{k} \to \infty.\]
On the other hand, the sequence
$Q \ni \tilde y^k = (0,1/k)=(0,0) + 1/k(0,1)$
also converges to $\yb$ from $v$ with $\Psi(\tilde y^k)=\Psi(\yb)$.

We have $y^* = (-1,0) \in N_Q(\yb,v)$. The only direction
$u \in T_C(\xb) = \R_- \times \R \times \{0\} \ \cup \
\{0\} \times \{0\} \times \R_+$ with $\nabla \varphi(\xb) u = v$,
however, is $u=(0,1,0)$ and $\nabla \varphi(\xb)^T y^*
= (-1,0,0) \notin N_C(\xb;u) = \R_+ \times \{0\} \times \R$.
This shows that the estimate from (ii) does not hold.
\end{example}

In the remaining part of this section we will use the above
results to derive other calculus rules for
graphical derivatives of set-valued mappings.
Let us just mention that one can also obtain
the corresponding estimates, based on inner calmness*,
for the dual constructions defined via directional normals, 
such as the directional subdifferentials of
{\em value} (or {\em marginal}) functions, see \cite[Theorem 4.2]{BeGfrOut18a},
or the directional coderivatives of compositions
or sums of set-valued maps, see \cite[Theorems 5.1 and 5.2]{BeGfrOut18a}.

\subsection{Chain rule and sum rule for graphical derivatives of set-valued maps}

Let us begin with the chain rule. To this end,
consider the mappings $S_{1}: \mathbb{R}^{n}\rightrightarrows \mathbb{R}^{m},
S_{2}:\mathbb{R}^{m}\rightrightarrows \mathbb{R}^{s}$
with closed graphs, i.e., $S_{1}$ and $S_{2}$ are osc,
and set $S=S_{2}\circ S_{1}$.
Let $(\xb,\bar z) \in \gph S$ and $(u,w)\in \mathbb{R}^{n} \times \mathbb{R}^{s}$ be a pair of directions.
Finally, consider
the ``intermediate'' map $\Xi: \mathbb{R}^{n} \times \mathbb{R}^{s} \rightrightarrows \mathbb{R}^{m}$ defined by
\[
\Xi(x,z):= S_{1}(x) \cap S_{2}^{-1}(z) =\{y\in S_{1}(x) \mv z \in S_{2}(y)\}.
\]

Following the approach from \cite[Theorem 10.37]{RoWe98} one has
\begin{equation}\label{eq:BasicRel}
\dom \Xi = \gph S = \varphi(\gph \Xi), \quad \gph \Xi = \phi^{-1}(\gph S_{1} \times \gph S_{2}),
\end{equation}
where $\varphi:(x,z,y)\mapsto (x,z)$ and $\phi:(x,z,y)\mapsto (x,y,y,z)$.
This suggests applying first the image set rule from Corollary
\ref{Cor:T_I_S_dif} and then the pre-image set rule.

Note also that
\[
 \Psi(x,z) := \varphi^{-1}(x,z) \cap \gph \Xi
= \{(x,z,y) \mv y \in \Xi(x,z)\},
\]
so it is not surprising that the fuzzy inner calmness* assumption,
needed for the image set rule, can be imposed on $\Xi$, instead of $\Psi$.
As before, we note that $\gph \Xi$ is closed
by continuity of $\phi$ and osc of $S_1$ and $S_2$.
Taking into account Lemma \ref{Lem : iscomp_dom_closed},
let us assume that $\Xi$ is inner semicompact at
$(\xb,\bar z)$ wrt $\dom \Xi$.

On the other hand, for the pre-image set rule we will need
metric subregularity of the mapping
\begin{equation*}\label{eq-60}
 F(x,z,y):= \gph S_{1} \times \gph S_{2} - \phi(x,z,y) = \left [
 \begin{array}{l}
 \gph S_{1}-(x,y)\\
 \gph S_{2}-(y,z)
 \end{array}
 \right ].
\end{equation*}

\begin{theorem}[Chain rule for graphical derivatives] \label{The : ChainRuleGder}
If $\Xi$ is inner calm* at $(\xb,\zb)$ wrt $\dom \Xi$ in the fuzzy sense, then
\begin{equation}\label{eq:GDChRule}
 DS(\xb,\bar z) \subset \bigcup\limits_{ \yb \in \Xi(\bar{x},\bar{z}) } DS_2(\yb,\bar z) \circ DS_1(\xb,\yb).
\end{equation}
On the other hand, for any $\yb \in \Xi(\bar{x},\bar{z})$,
\begin{equation}\label{eq:GDChRuleRev}
 DS_2(\yb,\bar z) \circ DS_1(\xb,\yb) \subset DS(\xb,\bar z)
\end{equation}
holds provided $F$ is metrically subregular at $((\xb,\yb,\bar z),(0,0))$ and
\begin{equation}\label{eq: TangProdRel}
(u,v,v,w) \in T_{\gph S_{1} \times \gph S_{2}}(\xb,\yb,\yb,\bar z) \ \iff \ (u,v) \in T_{\gph S_{1}}(\xb,\yb), \, (v,w) \in T_{\gph S_{2}}(\yb,\bar z).
\end{equation}
\end{theorem}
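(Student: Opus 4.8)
The plan is to prove the two inclusions separately, feeding the relations \eqref{eq:BasicRel} into the image-set rule (Corollary \ref{Cor:T_I_S_dif}) and the pre-image (backward) rule for tangent cones. For \eqref{eq:GDChRule}, I would start from a pair $(u,w) \in \gph DS(\xb,\zb)$, i.e. $(u,w) \in T_{\gph S}(\xb,\zb) = T_{\dom\Xi}(\xb,\zb)$. Since $\gph S = \varphi(\gph\Xi)$ with $\varphi$ the projection $(x,z,y)\mapsto(x,z)$, and since $\Psi(x,z) = \varphi^{-1}(x,z)\cap\gph\Xi = \{(x,z,y)\mv y\in\Xi(x,z)\}$, the fuzzy inner calmness* of $\Xi$ at $(\xb,\zb)$ wrt $\dom\Xi$ transfers directly to fuzzy inner calmness* of $\Psi$ (the extra coordinate $(x,z)$ is carried along isometrically, so a calmness estimate for $\Xi$ yields one for $\Psi$ with essentially the same modulus). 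Corollary \ref{Cor:T_I_S_dif} (the equality case, using $\varphi$ linear hence $C^1$ with $\nabla\varphi$ the projection) then gives
\[
T_{\gph S}(\xb,\zb) = \bigcup_{(\xb,\zb,\yb)\in\gph\Xi} \nabla\varphi(\xb,\zb,\yb)\, T_{\gph\Xi}(\xb,\zb,\yb),
\]
i.e. $(u,w)\in T_{\gph S}(\xb,\zb)$ forces the existence of $\yb\in\Xi(\xb,\zb)$ and $v\in\R^m$ with $(u,w,v)\in T_{\gph\Xi}(\xb,\zb,\yb)$. Now I use the second relation in \eqref{eq:BasicRel}, $\gph\Xi = \phi^{-1}(\gph S_1\times\gph S_2)$: since $\phi$ is linear, the standard (always-valid) outer estimate for tangents to a pre-image, $T_{\phi^{-1}(E)}(\cdot) \subset (\nabla\phi)^{-1} T_E(\cdot)$, gives $(u,v,v,w) = \nabla\phi(\xb,\zb,\yb)(u,w,v) \in T_{\gph S_1\times\gph S_2}(\xb,\yb,\yb,\zb)$. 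Finally, $T_{\gph S_1\times\gph S_2} \subset T_{\gph S_1}\times T_{\gph S_2}$ always holds (this is the easy, always-true half of \eqref{eq: TangProdRel}, and does not need the hypothesis), yielding $(u,v)\in T_{\gph S_1}(\xb,\yb)$ and $(v,w)\in T_{\gph S_2}(\yb,\zb)$, that is $v\in DS_1(\xb,\yb)(u)$ and $w\in DS_2(\yb,\zb)(v)$. This is exactly $w \in \big(DS_2(\yb,\zb)\circ DS_1(\xb,\yb)\big)(u)$, proving \eqref{eq:GDChRule}.

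For the reverse inclusion \eqref{eq:GDChRuleRev}, fix $\yb\in\Xi(\xb,\zb)$ and take $w\in\big(DS_2(\yb,\zb)\circ DS_1(\xb,\yb)\big)(u)$, so there is $v$ with $(u,v)\in T_{\gph S_1}(\xb,\yb)$ and $(v,w)\in T_{\gph S_2}(\yb,\zb)$. By the assumed equivalence \eqref{eq: TangProdRel}, $(u,v,v,w)\in T_{\gph S_1\times\gph S_2}(\xb,\yb,\yb,\zb)$; and since $(u,v,v,w) = \nabla\phi(\xb,\zb,\yb)(u,w,v)$, the vector $(u,w,v)$ lies in $(\nabla\phi)^{-1}$ of this tangent cone. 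Here I invoke the backward (pre-image) rule for tangent cones under metric subregularity: $\gph\Xi = F^{-1}(0,0)$ with $F(x,z,y) = \gph S_1\times\gph S_2 - \phi(x,z,y)$, and $F$ is metrically subregular at $((\xb,\yb,\zb),(0,0))$ by hypothesis. The pre-image tangent estimate under metric subregularity (the primal counterpart of Proposition \ref{Pro:MSRMain}, or equivalently \cite[Proposition 4.1]{GfrOut16}; note $F$ here has locally closed graph by osc of $S_1,S_2$) then gives $(u,w,v)\in T_{\gph\Xi}(\xb,\zb,\yb)$. Projecting via $\varphi$ (using the always-true inclusion $\nabla\varphi(\cdot)T_{\gph\Xi}(\cdot)\subset T_{\varphi(\gph\Xi)}(\cdot) = T_{\gph S}(\xb,\zb)$ from \cite[Theorem 6.43]{RoWe98}) yields $(u,w)\in T_{\gph S}(\xb,\zb)$, i.e. $w\in DS(\xb,\zb)(u)$, as desired.

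I expect two points to require the most care. First, the precise statement and citation of the backward tangent-cone rule under metric subregularity: the excerpt states Proposition \ref{Pro:MSRMain} for normal cones and remarks that \cite[Proposition 4.1]{GfrOut16} also contains the tangent-cone version, so I would cite that directly rather than re-deriving it, but I must make sure the metric subregularity modulus and the local closedness hypothesis line up with what is available for $F$ (osc of $S_1,S_2$ gives closed graphs, hence $F$ has locally closed graph around the reference point). Second, the transfer of fuzzy inner calmness* from $\Xi$ to $\Psi$: this needs a brief argument that, because $\Psi(x,z) = \{(x,z)\}\times\Xi(x,z)$ in the relevant coordinates (up to reordering), a fuzzy-inner-calm* estimate for $\Xi$ along a direction $(u,w)$ produces one for $\Psi$ along the same direction with modulus controlled by $\sqrt{1+\kappa_{(u,w)}^2}$ or similar — routine, but worth one sentence so the reader sees why the hypothesis is phrased on $\Xi$. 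Everything else is bookkeeping with the linear maps $\varphi$ and $\phi$ and the always-valid tangent-cone inclusions, so the only genuinely substantive inputs are Corollary \ref{Cor:T_I_S_dif} (forward) and the metric-subregularity pre-image rule (backward), exactly as the paragraph preceding the theorem advertises.
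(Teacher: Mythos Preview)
Your proof is correct and follows essentially the same approach as the paper: both inclusions rest on Corollary \ref{Cor:T_I_S_dif} for the forward image step and on the pre-image tangent rule under metric subregularity for the backward step, with the always-valid halves of the product and pre-image inclusions filling in the rest. The paper organizes the argument slightly differently---it first shows that any $v \in D\Xi((\xb,\zb),\yb)(u,w)$ forces \emph{both} $w\in DS(\xb,\zb)(u)$ and $w\in DS_2(\yb,\zb)\circ DS_1(\xb,\yb)(u)$, and then observes that each hypothesis is precisely what is needed to produce such a $v$---and for the backward tangent rule it cites \cite[Proposition 1]{HenOut05} rather than \cite[Proposition 4.1]{GfrOut16}, but the substance is identical.
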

\begin{proof}
Instead of proving the two estimates one by one,
we rather propose a proof that emphasizes the role of
the graphical derivative of $\Xi$.
To this end, recall \eqref{eq:BasicRel}.
First, we claim that, given $\yb \in \Xi(\bar{x},\bar{z})$ and $v \in D\Xi((\xb,\zb),\yb)(u,w)$, we have
 \[w \in DS(\xb,\bar z)(u) \ \textrm{ and } \
  w \in DS_2(\yb,\bar z) \circ DS_1(\xb,\yb)(u).\]

Indeed, since $(u,w) = \nabla \varphi(\xb,\yb,\zb)(u,v,w)$,
Corollary \ref{Cor:T_I_S_dif} yields $w \in DS(\xb,\bar z)(u)$.
On the other hand, $\nabla \phi(\xb,\yb,\zb)(u,v,w)=(u,v,v,w)$ and
\cite[Theorem 6.31]{RoWe98} implies
$(u,v,v,w) \in T_{\gph S_{1} \times \gph S_{2}}(\xb,\yb,\yb,\bar z)$.
Hence, $w \in DS_2(\yb,\bar z) \circ DS_1(\xb,\yb)(u)$
follows from the forward implication $\Rightarrow$ of \eqref{eq: TangProdRel},
which always holds by \cite[Proposition 6.41]{RoWe98}.

Note that we have used the image set and the pre-image set rule,
but we have not needed any of the assumptions.
The assumptions are in fact needed to get
$\yb \in \Xi(\bar{x},\bar{z})$ and $v \in D\Xi((\xb,\zb),\yb)(u,w)$.

Consider $w \in DS(\xb,\bar z)(u)$.
Thanks to the fuzzy inner calmness* of $\Xi$,
Corollary \ref{Cor:T_I_S_dif} precisely gives
$\yb \in \Xi(\bar{x},\bar{z})$ and $v \in D\Xi((\xb,\zb),\yb)(u,w)$
and \eqref{eq:GDChRule} follows.

Given now $w \in DS_2(\yb,\bar z) \circ DS_1(\xb,\yb)(u)$,
using \eqref{eq: TangProdRel} and then the subregularity of $F$
(which is equivalent to the calmness of $F^{-1}$),
\cite[Proposition 1]{HenOut05} again yields
$v \in D\Xi((\xb,\zb),\yb)(u,w)$.
\end{proof}
\noindent
We point out that the subregularity assumption and \eqref{eq: TangProdRel}
can be replaced by asking directly for the implication
\begin{equation}\label{eq: TangProdRel*}
 w \in DS_2(\yb,\bar z) \circ DS_1(\xb,\yb)(u) \ \Longrightarrow \
\exists \, v \in D\Xi((\xb,\zb),\yb)(u,w).
\end{equation}
Thus, if this is satisfied for every $\yb \in \Xi(\bar{x},\bar{z})$,
we get the exact chain rule
\begin{equation*}
 DS(\xb,\bar z) = \bigcup\limits_{ \yb \in \Xi(\bar{x},\bar{z}) } DS_2(\yb,\bar z) \circ DS_1(\xb,\yb).
\end{equation*}

Note that in \cite[p. 454]{RoWe98}, the authors argue that it is difficult
to obtain a reasonable chain rule for graphical derivatives,
since the image set and pre-image set rules for tangent cones
in general work in ``opposite direction'',
see \cite[Theorems 6.31 and 6.43]{RoWe98}.
More precisely, the upper estimate of the tangent cone
to image set in \cite[Theorem 6.43]{RoWe98}
was obtained only under quite restrictive assumptions,
see the discussion after Corollary \ref{Cor:T_I_S_dif}.
Here we see that it can be done using (fuzzy) inner calmness*,
which can be applied in relevant situations, as the previous
(and even more so the next) section shows.

Next we briefly discuss the situation when one of the mappings
is single-valued.

\begin{corollary}\label{Cor:S_1_single}
If $S_1$ is single-valued and calm at $\xb$, then
$DS(\xb,\bar z) \subset DS_2(S_1(\xb),\bar z) \circ DS_1(\xb)$.
If $S_1$ is even differentiable at $\xb$, the opposite inclusion
holds as well provided the map
 \begin{equation*}
     (x,z,y) \tto (S_1(x) - y) \times (\gph S_{2} - (y,z))
 \end{equation*}
is metrically subregular at $((\xb,\zb,S_1(\xb)),(0,0,0))$.
\end{corollary}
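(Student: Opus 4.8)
The plan is to apply the chain rule of Theorem~\ref{The : ChainRuleGder} after observing that here the intermediate map collapses to a single-valued object. Indeed, $\Xi(x,z)=S_1(x)\cap S_2^{-1}(z)$ equals $\{S_1(x)\}$ when $(x,z)\in\gph S$ and $\emptyset$ otherwise, so $\dom\Xi=\gph S$ and, writing $\yb:=S_1(\xb)$, the assumption $(\xb,\zb)\in\gph S$ forces $\zb\in S_2(\yb)$ and hence $\Xi(\xb,\zb)=\{\yb\}$. As noted after \eqref{eq:BasicRel}, $\gph\Xi=\phi^{-1}(\gph S_1\times\gph S_2)$ is closed; moreover in both cases $S_1$ is continuous at $\xb$ (calmness at $\xb$, resp.\ differentiability at $\xb$, entails this), so the selection $S_1(x_k)$ converges whenever $(x_k,z_k)\to(\xb,\zb)$ in $\dom\Xi$, i.e.\ $\Xi$ is inner semicompact at $(\xb,\zb)$ wrt $\dom\Xi$. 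Thus the standing assumptions of Theorem~\ref{The : ChainRuleGder} hold automatically, and throughout we use the singleton $\Xi(\xb,\zb)=\{\yb\}$ to remove the union over $\Xi(\xb,\zb)$ and the identification $DS_1(\xb,\yb)=DS_1(\xb)$ valid for single-valued $S_1$.

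For the first inclusion, calmness of $S_1$ at $\xb$ supplies $\kappa>0$ and a neighbourhood $V$ of $\xb$ with $\norm{S_1(x)-\yb}\leq\kappa\norm{x-\xb}$ for $x\in V$. Given any $(x_k,z_k)\to(\xb,\zb)$ with $(x_k,z_k)\in\dom\Xi=\gph S$, the selection $y_k:=S_1(x_k)\in\Xi(x_k,z_k)$ satisfies $\norm{y_k-\yb}\leq\kappa\norm{x_k-\xb}\leq\kappa\norm{(x_k,z_k)-(\xb,\zb)}$, which is exactly \eqref{eq:ICdef}. Hence $\Xi$ is inner calm at $((\xb,\zb),\yb)$ wrt $\dom\Xi$, in particular inner calm* in the fuzzy sense, and the forward part of Theorem~\ref{The : ChainRuleGder} yields $DS(\xb,\zb)\subset\bigcup_{\yb'\in\Xi(\xb,\zb)}DS_2(\yb',\zb)\circ DS_1(\xb,\yb')=DS_2(S_1(\xb),\zb)\circ DS_1(\xb)$.

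For the second inclusion, assume $S_1$ is differentiable at $\xb$, so $DS_1(\xb)(u)=\{\nabla S_1(\xb)u\}$, and put $G(x,z,y):=(S_1(x)-y)\times(\gph S_2-(y,z))$, so that $\gph\Xi=\{(x,z,y)\mv 0\in G(x,z,y)\}$. Rather than checking the literal hypotheses of the reverse part of Theorem~\ref{The : ChainRuleGder}, I would verify the sufficient condition \eqref{eq: TangProdRel*} from the remark after it, which is adapted to this map. Fix $u$, set $v:=\nabla S_1(\xb)u$ and take any $w\in DS_2(\yb,\zb)(v)$, i.e.\ $(v,w)\in T_{\gph S_2}(\yb,\zb)$. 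A direct computation of the graphical derivative of $G$ at $((\xb,\zb,\yb),0)$ — using Fr\'echet differentiability of $S_1$ at $\xb$ for the first block, and for the second block the fact that $\{(x,z,y,b)\mv (y,z)+b\in\gph S_2\}$ is the preimage of $\gph S_2$ under a surjective linear map — gives that $0\in DG((\xb,\zb,\yb),0)(u,w,v)$ precisely because $v=\nabla S_1(\xb)u$ and $(v,w)\in T_{\gph S_2}(\yb,\zb)$. Since metric subregularity of $G$ at $((\xb,\zb,\yb),(0,0,0))$ is equivalent to calmness of $G^{-1}$, \cite[Proposition 1]{HenOut05} applies and yields $(u,w,v)\in T_{\gph\Xi}(\xb,\zb,\yb)$, that is $v\in D\Xi((\xb,\zb),\yb)(u,w)$. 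This is exactly \eqref{eq: TangProdRel*} for the unique $\yb\in\Xi(\xb,\zb)$, so \eqref{eq:GDChRuleRev} gives $DS_2(S_1(\xb),\zb)\circ DS_1(\xb)\subset DS(\xb,\zb)$.

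The main obstacle is this last step: correctly evaluating the graphical derivative of the mixed (single-valued $\times$ set-valued) map $G$ and confirming that \cite[Proposition 1]{HenOut05} applies verbatim in that generality. A more pedestrian route, staying with the stated hypotheses of Theorem~\ref{The : ChainRuleGder}, would check the tangent product relation \eqref{eq: TangProdRel} — which is trivial here, since any tangent of the form $(u,v,v,w)$ to $\gph S_1\times\gph S_2$ at $(\xb,\yb,\yb,\zb)$ is witnessed by taking the $x$-component constant equal to $u$ and the time steps from a sequence witnessing $(v,w)\in T_{\gph S_2}(\yb,\zb)$ — together with metric subregularity of the map $F$ from Theorem~\ref{The : ChainRuleGder}; however, deducing the latter from metric subregularity of $G$ requires $\dist{(x,y),\gph S_1}$ and $\norm{S_1(x)-y}$ to be comparable near $(\xb,\yb)$, i.e.\ $S_1$ Lipschitz near $\xb$ rather than merely differentiable at $\xb$, which is why the \eqref{eq: TangProdRel*} route is preferable.
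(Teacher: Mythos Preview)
Your first inclusion matches the paper exactly: the single-valuedness and calmness of $S_1$ make $\Xi$ single-valued and inner calm, so Theorem~\ref{The : ChainRuleGder} applies with no union.

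For the reverse inclusion you take a genuinely different route from the paper. The paper verifies \eqref{eq: TangProdRel} directly (using differentiability of $S_1$ at $\xb$ to produce a tangent sequence in $\gph S_1\times\gph S_2$ with a common time scale) and then asserts that metric subregularity of your map $G$ is \emph{equivalent} to metric subregularity of $F$ from Theorem~\ref{The : ChainRuleGder}. You instead bypass $F$ altogether and verify \eqref{eq: TangProdRel*} by computing $DG$ and invoking \cite[Proposition~1]{HenOut05} for $G$ directly. Both arguments are valid; yours is arguably cleaner in that it avoids the detour through $F$, while the paper's stays closer to the literal hypotheses of Theorem~\ref{The : ChainRuleGder}.

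However, your final paragraph contains an error. You claim that passing from subregularity of $G$ to subregularity of $F$ requires comparability of $\dist{(x,y),\gph S_1}$ and $\norm{S_1(x)-y}$, hence Lipschitzness of $S_1$. This is not so: although these two quantities need not be comparable for merely differentiable $S_1$, the implication ``$G$ subregular $\Rightarrow$ $F$ subregular'' holds under continuity alone. Given $(x,z,y)$ near $(\xb,\zb,\yb)$, project $(x,y)$ onto $\gph S_1$ to get $(x',y')=(x',S_1(x'))$; then $\norm{S_1(x')-y}=\norm{y'-y}\leq\dist{(x,y),\gph S_1}$, so subregularity of $G$ applied at $(x',z,y)$ gives
\[
\dist{(x',z,y),\gph\Xi}\leq\kappa\big(\dist{(x,y),\gph S_1}+\dist{(y,z),\gph S_2}\big),
\]
and adding $\norm{x-x'}\leq\dist{(x,y),\gph S_1}$ yields subregularity of $F$. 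So the paper's ``known and easy to check'' equivalence is correct, and the pedestrian route you sketch would also go through without any Lipschitz assumption. Your preferred route via \eqref{eq: TangProdRel*} remains correct, but the stated reason for preferring it is not.
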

\begin{proof}
If $S_1$ is single-valued and calm at $\xb$, then $\Xi$,
given by $\Xi(x,z) = S_1(x)$ if $z \in S_2(S_1(x))$ and
$\Xi(x,z) = \emptyset$ otherwise,
is single- (or empty-) valued and inner calm at $(\xb,\zb) \in \gph S$.
This shows the first statement.

To prove the second, let us first argue that \eqref{eq: TangProdRel}
holds for differentiable $S_1$. Indeed, given $(u,v,w)$ with
$v=\nabla S_1(\xb)(u)$ and $(v,w) \in T_{\gph S_{2}}(S_1(\xb),\zb)$,
consider $t_k \downarrow 0$, $(v_k,w_k) \to (v,w)$
with $\zb + t_k w_k \in S_2(S_1(\xb) + t_k v_k)$.
We get $S_{1}(\xb + t_k u) = S_1(\xb) + t_k \tilde v_k$
 for $\tilde v_k = \nabla S_{1}(\xb)u + o(1) \to v$.
 Thus
 \[
 (\xb,S_1(\xb),S_1(\xb),\zb) + t_k (u,\tilde v_k,v_k,w_k) \in
 \gph S_1 \times \gph S_2
 \]
 and \eqref{eq: TangProdRel} follows.
Finally, it is known and easy to check that the
assumed subregularity is equivalent to
the subregularity required by Theorem \ref{The : ChainRuleGder}
(instead of $\gph S_1 - (x,y)$ we use $S_1(x) - y$).
\end{proof}

\begin{corollary}\label{Cor:S_2_single}
Assume that $S_2$ is single-valued. 
Then \eqref{eq:GDChRule} holds if $\Xi$ is inner calm* at $(\bar{x},\bar{z})$
wrt $\dom \Xi$ in the fuzzy sense.
On the other hand, given $\yb \in \Xi(\bar{x},\bar{z})$,
\eqref{eq:GDChRuleRev} holds if $S_2$ is differentiable at $\yb$.
\end{corollary}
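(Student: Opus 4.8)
The plan is to handle the two inclusions of the chain rule separately and to rely on Theorem~\ref{The : ChainRuleGder} as much as possible. The forward inclusion \eqref{eq:GDChRule} needs nothing new: the hypothesis under which \eqref{eq:GDChRule} is asserted in Theorem~\ref{The : ChainRuleGder} is exactly fuzzy inner calmness* of $\Xi$ at $(\xb,\zb)$ wrt $\dom \Xi$, so single-valuedness of $S_2$ plays no role and the first claim is immediate.

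For the reverse inclusion \eqref{eq:GDChRuleRev} with a fixed $\yb \in \Xi(\xb,\zb)$, I would \emph{not} try to verify the pair of assumptions used in Theorem~\ref{The : ChainRuleGder} (metric subregularity of $F$ together with \eqref{eq: TangProdRel}), since differentiability of $S_2$ at $\yb$ does not by itself deliver the subregularity of $F$. Instead I would directly verify the combined condition \eqref{eq: TangProdRel*}, which, as noted right after Theorem~\ref{The : ChainRuleGder}, is enough to conclude \eqref{eq:GDChRuleRev}. Observe first that $\yb \in \Xi(\xb,\zb)$ forces $\zb = S_2(\yb)$, so $DS_2(\yb,\bar z)(v) = \{\nabla S_2(\yb)v\}$ is single-valued.

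Now let $w \in DS_2(\yb,\bar z)\circ DS_1(\xb,\yb)(u)$, so there is $v \in DS_1(\xb,\yb)(u)$ with $w = \nabla S_2(\yb)v$. Choose $t_k \downarrow 0$ and $(u_k,v_k)\to(u,v)$ with $\yb + t_k v_k \in S_1(\xb + t_k u_k)$, and set $z_k := S_2(\yb + t_k v_k)$. Differentiability of $S_2$ at $\yb$ yields $z_k = \zb + t_k w_k$ with $w_k := \nabla S_2(\yb)v_k + o(1)\to w$, and by construction $(\xb,\zb,\yb) + t_k(u_k,w_k,v_k)\in\gph\Xi$. Hence $(u,w,v)\in T_{\gph\Xi}(\xb,\zb,\yb)$, i.e.\ $v \in D\Xi((\xb,\zb),\yb)(u,w)$, which is precisely \eqref{eq: TangProdRel*}; the corresponding part of the proof of Theorem~\ref{The : ChainRuleGder} then gives $w\in DS(\xb,\bar z)(u)$. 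I expect no real obstacle here beyond bookkeeping: the only point requiring care is the decision to bypass the metric-subregularity route and instead produce the sequence realizing $v \in D\Xi((\xb,\zb),\yb)(u,w)$ directly, using only the first-order Taylor expansion of $S_2$ at $\yb$.
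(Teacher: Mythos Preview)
Your proposal is correct and matches the paper's own argument essentially line for line: the paper also leaves the forward inclusion to Theorem~\ref{The : ChainRuleGder} without further comment, and for \eqref{eq:GDChRuleRev} it likewise bypasses metric subregularity and verifies \eqref{eq: TangProdRel*} directly by taking $(u_k,v_k)\to(u,v)$ with $\yb+t_kv_k\in S_1(\xb+t_ku_k)$, expanding $S_2(\yb+t_kv_k)=\zb+t_kw_k$ via differentiability, and reading off $(\xb,\zb,\yb)+t_k(u_k,w_k,v_k)\in\gph\Xi$.
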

\begin{proof}
We show that if $S_2$ is differentiable at $\yb$,
\eqref{eq: TangProdRel*} holds.
Consider $(u,v,w)$ with $(u,v) \in T_{\gph S_{1}}(\xb,\yb)$
and $w=\nabla S_2(\yb)(v)$ and let
$t_k \downarrow 0$, $(u_k,v_k) \to (u,v)$
with $\yb + t_k v_k \in S_1(\xb + t_k u_k)$.
We get $S_{2}(\yb + t_k v_k) = S_{2}(\yb) + t_k w_k$
 for $w_k = \nabla S_{2}(\yb)v + o(1) \to w$.
 Thus $(\xb,\zb,\yb) + t_k (u_k,w_k,v_k) \in
 \gph \Xi$ and \eqref{eq: TangProdRel*} follows.
\end{proof}

Before we present the sum rule, we propose an auxiliary result,
interesting on its own.
Consider two osc mappings $S_i: \R^n \tto \R^{m_i}$ for $i = 1,2$
and let $\mathcal{P}: \R^n \tto \R^{m_1} \times \R^{m_2}$ be given by
$\mathcal{P}(x) = S_1(x) \times S_2(x)$.
Note that $\mathcal{P} = S_{o} \circ F_{1}$
for $F_{1}: x \to (x,x)$ and
$S_{o}:(q_{1},q_{2}) \rightrightarrows S_{1}(q_{1}) \times S_{2}(q_{2})$
and fix $(\xb,\bar y)=(\xb,(\yb_1,\yb_2)) \in \gph \mathcal{P}$.
\begin{proposition}\label{Pro: ProdMap}
 We have
 \begin{equation*}
  D \mathcal{P}(\xb,\bar y) \subset D S_{1}(\xb,\yb_1) \times D S_{2}(\xb,\yb_2)
 \end{equation*}
 and the opposite inclusion holds if the mapping
 \begin{equation}\label{eq:SubregProd}
     (x,y,q) \tto ((x,x) - (q_1,q_2)) \times (\gph S_{1} - (q_1,y_1))
     \times (\gph S_{2} - (q_2,y_2))
 \end{equation}
 is metrically subregular at $\big((\xb,\yb,(\xb,\xb)),(0,\ldots,0)\big)$
 and we have
 \begin{equation}\label{eq:TCprod}
 T_{\gph S_{1} \times \gph S_{2}}(\xb,\yb_1,\xb,\yb_2) = T_{\gph S_{1}}(\xb,\yb_1) \times T_{\gph S_{2}}(\xb,\yb_2).
\end{equation}
This is true, in particular, if one of the mappings $S_1$, $S_2$ is single-valued
 and differentiable at $\xb$.
 \end{proposition}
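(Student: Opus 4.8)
The plan is to exploit the decomposition $\mathcal{P}=S_o\circ F_1$ recorded just above the statement and to apply Corollary~\ref{Cor:S_1_single}, with the role of ``$S_1$'' played by $F_1:x\mapsto(x,x)$ (which is linear, hence single-valued and calm at $\xb$, and differentiable with $\nabla F_1(\xb)u=(u,u)$) and that of ``$S_2$'' played by $S_o$. The first inclusion requires nothing at all: if $(u,(w_1,w_2))\in T_{\gph\mathcal{P}}(\xb,\yb)$, the very sequences witnessing this also witness $(u,w_i)\in T_{\gph S_i}(\xb,\yb_i)$, i.e.\ $w_i\in DS_i(\xb,\yb_i)(u)$, for $i=1,2$.

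For the converse, the main bookkeeping device is the linear isomorphism $L:\big((q_1,q_2),(y_1,y_2)\big)\mapsto\big((q_1,y_1),(q_2,y_2)\big)$, which satisfies $L(\gph S_o)=\gph S_1\times\gph S_2$ and therefore, since tangent cones are preserved by linear isomorphisms,
\[
T_{\gph S_o}\big((\xb,\xb),\yb\big)=L^{-1}\big(T_{\gph S_1\times\gph S_2}(\xb,\yb_1,\xb,\yb_2)\big).
\]
First I would apply the second part of Corollary~\ref{Cor:S_1_single} to $\mathcal{P}=S_o\circ F_1$: as $F_1$ is differentiable, the inclusion $DS_o((\xb,\xb),\yb)\circ DF_1(\xb)\subset D\mathcal{P}(\xb,\yb)$ holds as soon as the map $(x,y,q)\tto(F_1(x)-q)\times(\gph S_o-(q,y))$ ($y$ the output variable of $\mathcal{P}$, $q=(q_1,q_2)$ the intermediate one, and $F_1(x)-q$ a legitimate shorthand since $F_1$ is single-valued) is metrically subregular at $\big((\xb,\yb,(\xb,\xb)),(0,\ldots,0)\big)$. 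Post-composing this map with $\mathrm{id}\times L$ on its image space preserves metric subregularity and, using $F_1(x)-q=(x-q_1,x-q_2)$ together with $L(\gph S_o-(q,y))=(\gph S_1-(q_1,y_1))\times(\gph S_2-(q_2,y_2))$, turns it into precisely the map of \eqref{eq:SubregProd}, the reference point remaining $\big((\xb,\yb,(\xb,\xb)),(0,\ldots,0)\big)$. Thus the assumed subregularity of \eqref{eq:SubregProd} is exactly what Corollary~\ref{Cor:S_1_single} requires.

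It then only remains to evaluate $DS_o((\xb,\xb),\yb)\circ DF_1(\xb)(u)=DS_o((\xb,\xb),\yb)(u,u)$. Transporting through $L$ once more and invoking the hypothesis \eqref{eq:TCprod} gives $T_{\gph S_o}((\xb,\xb),\yb)=L^{-1}\big(T_{\gph S_1}(\xb,\yb_1)\times T_{\gph S_2}(\xb,\yb_2)\big)$, hence $DS_o((\xb,\xb),\yb)(u,u)=DS_1(\xb,\yb_1)(u)\times DS_2(\xb,\yb_2)(u)$; together with the previous paragraph this yields $DS_1(\xb,\yb_1)\times DS_2(\xb,\yb_2)\subset D\mathcal{P}(\xb,\yb)$. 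I expect the only real difficulty here to be organizational --- tracking $L$ carefully enough to recognize that the abstract subregularity hypothesis of Corollary~\ref{Cor:S_1_single} is exactly \eqref{eq:SubregProd}, and noticing that \eqref{eq:TCprod}, which is a genuine assumption and not automatic for contingent cones, is precisely the ingredient that converts $DS_o$ into $DS_1\times DS_2$.

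For the final assertion, assume for definiteness that $S_1$ is single-valued and differentiable at $\xb$, so that $\yb_1=S_1(\xb)$. Then, locally around $(\xb,\yb_1,\yb_2)$, $\gph\mathcal{P}$ is the image of $\gph S_2$ under the $C^1$ embedding $(x,y_2)\mapsto(x,S_1(x),y_2)$, so $T_{\gph\mathcal{P}}(\xb,\yb_1,\yb_2)=\{(u,\nabla S_1(\xb)u,w_2)\mv(u,w_2)\in T_{\gph S_2}(\xb,\yb_2)\}$ and hence $D\mathcal{P}(\xb,\yb)(u)=\{\nabla S_1(\xb)u\}\times DS_2(\xb,\yb_2)(u)=DS_1(\xb,\yb_1)(u)\times DS_2(\xb,\yb_2)(u)$ outright; alternatively one checks that \eqref{eq:TCprod} holds (since $\gph S_1$ is a $C^1$ manifold near $(\xb,\yb_1)$, hence geometrically derivable there) and that \eqref{eq:SubregProd} is metrically subregular (its first two blocks pin down $x$, $q_1$, $q_2$ and $y_1$ through an affine constraint and the graph of a smooth single-valued map). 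The case of $S_2$ single-valued is symmetric.
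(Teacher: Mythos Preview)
Your argument is correct and, for the two main inclusions, follows exactly the paper's route: apply Corollary~\ref{Cor:S_1_single} to the composition $\mathcal{P}=S_o\circ F_1$ and use the variable permutation (your $L$, the paper's $\sigma$) both to identify the subregularity hypothesis of that corollary with \eqref{eq:SubregProd} and to express $DS_o$ in terms of $T_{\gph S_1\times\gph S_2}$, so that \eqref{eq:TCprod} is exactly what is needed for the reverse inclusion.

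For the final assertion there is a small but genuine difference in method. The paper verifies the two hypotheses separately: \eqref{eq:TCprod} by the Taylor-expansion argument from the proof of Corollary~\ref{Cor:S_1_single}, and the metric subregularity of \eqref{eq:SubregProd} via the stronger metric \emph{regularity}, whose coderivative criterion reads $x_1^*\in D^*S_1(\xb,\yb_1)(0)$, $x_2^*\in D^*S_2(\xb,\yb_2)(0)$, $x_1^*+x_2^*=0\Rightarrow x_1^*=x_2^*=0$, a condition trivially satisfied when one $S_i$ is single-valued and differentiable. Your primary route instead computes $T_{\gph\mathcal{P}}$ directly and bypasses both hypotheses, which is more elementary. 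One terminological caveat: calling $(x,y_2)\mapsto(x,S_1(x),y_2)$ a ``$C^1$ embedding'' overstates the assumption, since $S_1$ is only assumed differentiable at the single point $\xb$; nevertheless, the tangent-cone identity you state follows from a direct sequential argument using differentiability at $\xb$ alone, so the conclusion stands.
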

\begin{proof}
 Since $x \to (x,x)$ is differentiable and
\begin{equation*}
 \gph S_{0} = \{(q_1,q_2,y_1,y_2) \mv (q_1,y_1) \in \gph S_{1},
 (q_2,y_2) \in \gph S_2\} = \sigma(\gph S_{1} \times \gph S_{2}),
\end{equation*}
where $\sigma$ merely permutes the variables,
Corollary \ref{Cor:S_1_single} readily yields the estimates.

If one of the maps is single-valued and differentiable,
\eqref{eq:TCprod} follows by the same arguments
as used in the proof of Corollary \ref{Cor:S_1_single}.
Moreover, the subregularity of mapping \eqref{eq:SubregProd}
is implied by its metric regularity, which reads
\[
x_1^* \in D^*S_1(\xb,\yb_1)(0), \quad
x_2^* \in D^*S_2(\xb,\yb_2)(0), \quad
x_1^* + x_2^* = 0 \quad \Longrightarrow
\quad x_1^*, x_2^* = 0,
\]
see e.g. \cite[page 737]{BeGfrOut18a}.
This is clearly true if one of the maps is single-valued
and differentiable.
\end{proof}

Mapping $\mathcal{P}$ naturally appears whenever one is dealing
with a mapping given by some binary operation, as we will see next.
In order to state the sum rule, let $S_1,S_2$ be as before
but with $m_1=m_2=:m$ and consider $S=S_{1}+S_{2}$ and
$(\xb,\bar z) \in \gph S$.
Clearly $S$ can be written via $\mathcal{P}$
as $S=F_{2} \circ \mathcal{P}$ for $F_{2}:(y_{1}, y_{2}) \to y_{1}+ y_{2}$.
Thus, the following result follows from
Proposition \ref{Pro: ProdMap} and Corollary \ref{Cor:S_2_single},
where the corresponding ``intermediate'' mapping
$\Xi: \mathbb{R}^{n} \times \mathbb{R}^{m} \rightrightarrows \mathbb{R}^{m} \times \mathbb{R}^{m}$
is given by
\[
\Xi(x,z) := \mathcal{P}(x) \cap F_{2}^{-1}(z)
= \{y=(y_{1},y_{2}) \in \mathbb{R}^{m} \times \mathbb{R}^{m} \mv y_{1} \in S_{1}(x), y_{2} \in S_{2}(x), y_{1} + y_2=z\}.
\]

\begin{theorem}[Sum rule for graphical derivatives] \label{The : SumRuleGder}
If $\Xi$ is inner calm* at $(\xb,\zb)$ wrt $\dom \Xi$ in the fuzzy sense, then
\begin{equation}\label{eq:GDSumRule}
 DS(\xb,\bar z) \subset \bigcup\limits_{ \yb \in \Xi(\bar{x},\bar{z}) } DS_1(\xb,\yb_1) + DS_2(\xb,\yb_2).
\end{equation}
Given $\yb \in \Xi(\bar{x},\bar{z})$ such that
the mapping \eqref{eq:SubregProd} is metrically subregular at $\big((\xb,\yb,(\xb,\xb)),(0,\ldots,0)\big)$
and \eqref{eq:TCprod} holds, we have
\begin{equation}\label{eq:GDSumRuleRev}
 DS_1(\xb,\yb_1) + DS_2(\xb,\yb_2) \subset DS(\xb,\bar z).
\end{equation}
In particular, if $S_1$ is single-valued and differentiable, we get
\[DS(\xb,\bar z) = \nabla S_1(\xb) + DS_2(\xb,\bar z - S_1(\xb)).\]
\end{theorem}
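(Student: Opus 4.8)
The plan is to reduce the statement to the two results the paper has just proved, namely the product rule of Proposition~\ref{Pro: ProdMap} and the single-valued chain rule of Corollary~\ref{Cor:S_2_single}, applied to the factorization $S = F_{2}\circ\mathcal{P}$ with $\mathcal{P}(x)=S_{1}(x)\times S_{2}(x)$ and $F_{2}(y_{1},y_{2})=y_{1}+y_{2}$. First I would record that $F_{2}$, being linear, is differentiable everywhere, with $DF_{2}(y,\bar z)(v_{1},v_{2})=\{v_{1}+v_{2}\}$ for every $(y,\bar z)\in\gph F_{2}$, and that the ``intermediate'' map attached to this composition in the sense of \eqref{eq:BasicRel}, i.e. $(x,z)\mapsto\mathcal{P}(x)\cap F_{2}^{-1}(z)$, is precisely the map $\Xi$ displayed just before the theorem, with $\dom\Xi=\gph S$. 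This identification is immediate from the definitions and is the only bookkeeping needed.

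For the forward inclusion \eqref{eq:GDSumRule} I would invoke Corollary~\ref{Cor:S_2_single} (with $S_{2}$ there replaced by the single-valued $F_{2}$): under fuzzy inner calmness* of $\Xi$ it yields $DS(\xb,\bar z)\subset\bigcup_{\yb\in\Xi(\xb,\bar z)}DF_{2}(\yb,\bar z)\circ D\mathcal{P}(\xb,\yb)$. Then the unconditional forward half of Proposition~\ref{Pro: ProdMap}, $D\mathcal{P}(\xb,\yb)(u)\subset DS_{1}(\xb,\yb_{1})(u)\times DS_{2}(\xb,\yb_{2})(u)$, combined with the fact that $DF_{2}$ merely adds the two components, gives \eqref{eq:GDSumRule}. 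For the reverse inclusion \eqref{eq:GDSumRuleRev} I would fix $\yb\in\Xi(\xb,\bar z)$. Since $F_{2}$ is differentiable, the reverse half of Corollary~\ref{Cor:S_2_single} gives $DF_{2}(\yb,\bar z)\circ D\mathcal{P}(\xb,\yb)\subset DS(\xb,\bar z)$ with no further hypothesis, while the metric subregularity of \eqref{eq:SubregProd} together with \eqref{eq:TCprod} lets the reverse half of Proposition~\ref{Pro: ProdMap} produce $DS_{1}(\xb,\yb_{1})(u)\times DS_{2}(\xb,\yb_{2})(u)\subset D\mathcal{P}(\xb,\yb)(u)$. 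Chaining these two inclusions and summing components gives \eqref{eq:GDSumRuleRev}.

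Finally, for the ``in particular'' clause I would note that when $S_{1}$ is single-valued and differentiable at $\xb$ one has $\Xi(x,z)=\{(S_{1}(x),\,z-S_{1}(x))\}$ for $(x,z)\in\gph S$ and $\Xi(x,z)=\emptyset$ otherwise, so $\Xi(\xb,\bar z)$ is the singleton $\{\yb\}$ with $\yb=(S_{1}(\xb),\,\bar z-S_{1}(\xb))$; local Lipschitzness of $S_{1}$ makes $\Xi$ inner calm at $(\xb,\bar z,\yb)$ wrt $\dom\Xi$, hence inner calm* in the fuzzy sense, so \eqref{eq:GDSumRule} applies with the singleton union, and the last sentence of Proposition~\ref{Pro: ProdMap} supplies the subregularity of \eqref{eq:SubregProd} and \eqref{eq:TCprod} for this $\yb$, so \eqref{eq:GDSumRuleRev} applies too; combining them and using $DS_{1}(\xb,\yb_{1})(u)=\{\nabla S_{1}(\xb)u\}$ yields the equality. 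I do not anticipate any genuine obstacle here, since all the substance sits in Proposition~\ref{Pro: ProdMap} and Corollary~\ref{Cor:S_2_single}; the only delicate point is verifying that the intermediate map and the subregularity hypothesis of Corollary~\ref{Cor:S_2_single} line up exactly with the $\Xi$ and the condition \eqref{eq:SubregProd} appearing in the statement.
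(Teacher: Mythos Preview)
Your proposal is correct and matches the paper's approach exactly: the paper does not give a separate proof but states just before the theorem that the result follows from Proposition~\ref{Pro: ProdMap} and Corollary~\ref{Cor:S_2_single}, which is precisely the reduction you spell out. One small slip: Corollary~\ref{Cor:S_2_single} has no subregularity hypothesis for the reverse inclusion (differentiability of $F_{2}$ suffices), so the only subregularity needed is that of \eqref{eq:SubregProd} feeding into Proposition~\ref{Pro: ProdMap}.
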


We conclude this section by another application of map $\mathcal{P}$,
namely a product rule,
where $S=S_{1} \cdot S_{2}$ for an osc mapping $S_2:\mathbb{R}^{n} \rightrightarrows \mathbb{R}^{m}$
and a single-valued and differentiable $S_1: \R^n \to (\R^m)^l$.
More precisely, $S_1(x)$ is a matrix of $m$ rows and $l$ columns
and so
\[S(x) = \bigcup_{y \in S_2(x)} S_1(x)^T y =
\bigcup_{y \in S_2(x)} \skalp{y,S_1}(x) = F_{3} \circ \mathcal{P}(x)\]
for $F_{3}: (\R^m)^l \times \R^m \to \R^l$ given by $F(A, y) = A^T y$.
Taking into account the single-valuedness of $S_1$,
instead of $\mathcal{P}(x) \cap F_{3}^{-1}(z)$,
we can use the following simplified ``intermediate'' mapping 
$\widetilde \Xi: \mathbb{R}^{n} \times \mathbb{R}^{l} \rightrightarrows \mathbb{R}^{m}$,
given by
\begin{equation}\label{eq:InterMapPrdRule}
\widetilde \Xi(x,z) := \{y \in S_{2}(x) \mv S_1(x)^T y=z\}. 
\end{equation}

Let $(\xb,\bar z) \in \gph S$ and $(u,w)\in \mathbb{R}^{n} \times \mathbb{R}^{l}$ be a pair of directions.
We also provide estimates for the coderivatives, since
they are new and, more importantly, we will use them in the next section.

\begin{theorem}[Product rule] \label{The : ProdRule}
If $\widetilde\Xi$ is inner calm* at $(\xb,\zb)$ wrt $\dom \widetilde\Xi$ in the fuzzy sense, then
\begin{equation*}
    DS(\xb,\bar z)(u) =
 \bigcup\limits_{\yb \in \widetilde\Xi(\bar{x},\bar{z})}
 \nabla \skalp{\yb,S_1}(\xb)u + S_1(\xb)^T DS_2(\xb,\yb)(u).
\end{equation*}
Moreover, if $\widetilde\Xi$ is inner calm* at $(\xb,\zb)$ in
direction $(u,w)$
wrt $\dom \widetilde\Xi$, then
\begin{equation*}
    D^{*}S((\xb,\bar{z}); (u,w))(z^*) \subset
 \bigcup\limits_{\yb \in \widetilde\Xi(\bar{x},\bar{z})} \
 \bigcup\limits_{v \in D \widetilde\Xi((\bar{x},\bar{z}),\yb)(u,w)}
  \nabla \skalp{\yb,S_1}(\bar{x})^T z^* +
   D^{*}S_2((\bar{x}, \yb);(u,v)) (S_{1}(\bar{x}) z^*)
\end{equation*}
holds for all $z^* \in \R^l$.
Finally, under just the inner semicompactness of $\widetilde\Xi$, we have
\[D^{*}S(\xb,\bar{z})(z^*) \subset
 \bigcup\limits_{\yb \in \widetilde\Xi(\bar{x},\bar{z})} \
  \nabla \skalp{\yb,S_1}(\bar{x})^T z^* +
   D^{*}S_2(\bar{x}, \yb) (S_{1}(\bar{x}) z^*).\]
\end{theorem}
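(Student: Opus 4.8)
\emph{Proof proposal.}
The plan is to place $S=S_1\cdot S_2$ into the image/pre-image machinery of Section~4 using the reduced intermediate map $\widetilde\Xi$ from \eqref{eq:InterMapPrdRule}. Concretely, I would write $\gph S=\varphi(\gph\widetilde\Xi)$ for the coordinate projection $\varphi:(x,z,y)\mapsto(x,z)$, and simultaneously $\gph\widetilde\Xi=\Phi^{-1}\big(\gph S_2\times\{0\}^l\big)$ for the smooth map $\Phi(x,z,y):=\big(x,\,y,\,z-\skalp{y,S_1}(x)\big)$. Two features drive everything. First, the fiber map of $\varphi$ relative to $\gph\widetilde\Xi$, namely $(x,z)\mapsto\{(x,z,y)\mv y\in\widetilde\Xi(x,z)\}$, carries exactly the same inner semicompactness, fuzzy inner calmness*, and inner calmness*-in-a-direction as $\widetilde\Xi$ itself, since only the smooth (hence Lipschitz) coordinates $(x,z)$ are appended; in particular, in the standing setting of the image-set rules, $\widetilde\Xi$ is inner semicompact, and Lemma~\ref{Lem : iscomp_dom_closed} then yields local closedness of $\gph S=\dom\widetilde\Xi$. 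Second, $\Phi$ is a submersion (its Jacobian is surjective at every point, as one checks directly), so the classical pre-image calculus for tangent and normal cones, including its directional version, applies to $\gph\widetilde\Xi$ with equality and without any further qualification. A short computation then gives
\[
T_{\gph\widetilde\Xi}(\xb,\zb,\yb)=\{(u,w,v)\mv (u,v)\in T_{\gph S_2}(\xb,\yb),\ w=\nabla\skalp{\yb,S_1}(\xb)u+S_1(\xb)^T v\},
\]
while for normals one obtains $N_{\gph\widetilde\Xi}(\cdot\,;\cdot)=\nabla\Phi(\cdot)^T\big(N_{\gph S_2}((\xb,\yb);(u,v))\times\R^l\big)$ along the corresponding directions, with the analogous identity (without directions) for the limiting normal cone.

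For the graphical derivative I would invoke Corollary~\ref{Cor:T_I_S_dif}: $\varphi$ is smooth, and fuzzy inner calmness* of $\widetilde\Xi$ upgrades the always-valid inclusion $T_{\gph S}\supset\bigcup_{\yb}\nabla\varphi\,T_{\gph\widetilde\Xi}$ to an equality. Projecting out the $y$-coordinate in the displayed formula for $T_{\gph\widetilde\Xi}$ then reads off precisely $DS(\xb,\bar z)(u)=\bigcup_{\yb\in\widetilde\Xi(\xb,\zb)}\nabla\skalp{\yb,S_1}(\xb)u+S_1(\xb)^T DS_2(\xb,\yb)(u)$. Equivalently, one could apply the chain rule of Corollary~\ref{Cor:S_2_single} to $S=F_3\circ\mathcal P$ with the smooth outer map $F_3$, together with Proposition~\ref{Pro: ProdMap} for $D\mathcal P$ (the metric subregularity required for the reverse inclusion being automatic since $F_3$ and $S_1$ are differentiable); this produces the same formula once $\nabla F_3$ is written out and $(\nabla S_1(\xb)u)^T\yb$ is identified with $\nabla\skalp{\yb,S_1}(\xb)u$.

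For the directional coderivative I would apply Theorem~\ref{The : ConstSetForw}(ii) with the smooth $\varphi$ and the direction $(u,w)$, whose hypothesis is exactly inner calmness* of $\widetilde\Xi$ at $(\xb,\zb)$ in direction $(u,w)$. Since $\varphi$ is linear, $D^*\varphi$ is the adjoint projection, so the resulting membership collapses to $(x^*,-z^*,0)\in N_{\gph\widetilde\Xi}((\xb,\zb,\yb);(u,w,v))$ with $v\in D\widetilde\Xi((\xb,\zb),\yb)(u,w)$; substituting the pre-image formula for $N_{\gph\widetilde\Xi}$ and eliminating the free multiplier (the $\R^l$ factor) yields $x^*\in\nabla\skalp{\yb,S_1}(\xb)^T z^*+D^*S_2((\xb,\yb);(u,v))(S_1(\xb)z^*)$, i.e.\ the claimed inclusion. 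The non-directional estimate follows along the same lines from Theorem~\ref{The : ConstSetForw}(i) taken at direction $v=0$, using $N_\Omega(\bar z;0)=N_\Omega(\bar z)$ for closed $\Omega$ and bounding the directional normals of $\gph S_2$ by the limiting ones; this makes the two auxiliary unions over tangent directions in Theorem~\ref{The : ConstSetForw}(i) collapse, leaving $D^*S(\xb,\bar z)(z^*)\subset\bigcup_{\yb}\nabla\skalp{\yb,S_1}(\xb)^T z^*+D^*S_2(\xb,\yb)(S_1(\xb)z^*)$.

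The hard part is the bookkeeping around the submersion $\Phi$: one must check that surjectivity of $\nabla\Phi$ genuinely discharges the metric (sub)regularity qualification underlying the (directional) pre-image rules, so that the formulas for $T_{\gph\widetilde\Xi}$ and $N_{\gph\widetilde\Xi}$ hold with equality, and then track directions and adjoints through $\varphi$ and $\Phi$ carefully, so that the tangent directions appearing are exactly $v\in D\widetilde\Xi((\xb,\zb),\yb)(u,w)$ and the dual argument of $D^*S_2$ is exactly $S_1(\xb)z^*$. Once this is in place, the transfer of the inner-type properties from $\widetilde\Xi$ to the fiber map, and the matrix algebra in the elimination step, are routine.
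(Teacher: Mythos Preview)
Your argument is correct. For the graphical derivative, you explicitly offer two routes, and the second one---applying Corollary~\ref{Cor:S_2_single} to $S=F_3\circ\mathcal P$ together with Proposition~\ref{Pro: ProdMap}---is precisely the paper's proof. For the coderivative statements, however, your approach genuinely differs from the paper's. The paper does not work out the normal-cone calculus explicitly; instead it invokes the chain rule and product rule for directional coderivatives developed in \cite[Theorem~5.1, Corollaries~5.1 and 5.2]{BeGfrOut18a}, noting that the inner-calmness* version of the latter drops the extra union coming from Theorem~\ref{The : ConstSetForw}(i). You instead stay entirely within the present paper: you realize $\gph\widetilde\Xi=\Phi^{-1}(\gph S_2\times\{0\})$ with $\nabla\Phi$ surjective, so the (directional) pre-image formulas for tangents and normals hold with equality and no qualification, and then feed this into Theorem~\ref{The : ConstSetForw} directly. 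The gain is self-containment and transparency---one sees exactly where the dual argument $S_1(\xb)z^*$ and the term $\nabla\skalp{\yb,S_1}(\xb)^Tz^*$ come from by eliminating the free multiplier $c^*\in\R^l$ in $\nabla\Phi^T(a^*,b^*,c^*)=(x^*,-z^*,0)$. The paper's route, by contrast, packages this elimination inside the cited corollaries, which is shorter to state but requires the reader to consult \cite{BeGfrOut18a}. Both lead to the same formula; your version has the minor advantage of making the submersion structure of the product map explicit.
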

\begin{proof}
 The statement about the graphical derivative again comes from
 Corollary \ref{Cor:S_2_single} and Proposition \ref{Pro: ProdMap},
 taking into account the simple structure of $F_3$.
 
 On the basis of Theorem \ref{The : ConstSetForw},
 mimicking the approach from \cite[Theorem 5.1]{BeGfrOut18a},
 one can derive the chain rule for the directional coderivatives
 based on inner calmness*, which looks like \cite[formula (26)]{BeGfrOut18a},
 but without the additional union (see the difference between
 the estimates in (i) and (ii) of Theorem \ref{The : ConstSetForw}).
 The proof then follows by applying \cite[Corollary 5.1]{BeGfrOut18a}
 and the corresponding inner calm* version of \cite[Corollary 5.2]{BeGfrOut18a}.
\end{proof}
\section{Application: Generalized derivatives of the normal cone mapping}

In this section, we apply the proposed calculus rules to compute
the graphical derivative of the normal cone mapping
and to estimate its directional limiting coderivative.
We will show that our calculus-based approach
is very robust and easy to use
in the case of simple constraints
as well as in the parametrized setting.
We then use the estimates to derive some results regarding
the semismoothness* of the normal cone mapping.

\subsection{Simple constraints}
Consider the simple constraint system $g(x) \in D$,
where $D \subset \R^s$ is a convex polyhedral set and $g: \R^n \to \R^s$
is twice continuously differentiable and denote
\begin{equation}
 \Gamma:=g^{-1}(D)=\{x \in \R^n \mv g(x) \in D\}.
\end{equation}
Moreover, fix $\xb \in \Gamma$ and assume that the constraint map $g(x) - D$
is metrically subregular at $\xb$, which, in turn,
means that the subregularity holds at all $x$ near $\xb$ and thus
\begin{equation*}
 N_{\Gamma}(x)=\nabla g(x)^T N_D(g(x)).
\end{equation*}
This shows that the normal cone mapping $x^\prime \tto  N_{\Gamma}(x^\prime)$
can be written as the product of two maps,
the single-valued $\nabla g$ and
the set-valued $N_D \circ g$.

Since the set-valued part contains a composition,
let us first address this issue.
To this end, consider the mapping $F: \R^n\times\R^s \tto \R^s\times\R^s$
given by
\[F(x^\prime,\lambda^\prime) := (g(x^\prime),\lambda^\prime) - \gph N_D\]
and note that $\gph (N_D \circ g) = F^{-1}(0,0)$.
\begin{lemma}\label{Lem:MapM}
Given $\lambda \in N_D(g(x))$, we have
\[\eta \in D (N_D \circ g)(x,\lambda)(u) \ \Longrightarrow \ 
\eta \in D N_D(g(x),\lambda)(\nabla g(x)u).\]
If $F$ is metrically subregular
at $\big((x,\lambda),(0,0)\big)$ in direction $(u,\eta)$,
the reverse implication holds true as well and
for every $\xi^* \in \R^s$ we have
\[
D^{*}(N_D \circ g)((x,\lambda);(u,\eta))(\xi^*) \ \subset \
\nabla g(x)^T D^{*}N_D((g(x),\lambda);(\nabla g(x) u,\eta))(\xi^*).
\]
\end{lemma}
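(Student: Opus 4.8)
The plan is to treat this as an instance of the chain rule machinery already set up in Section 4, applied to the composition $N_D \circ g$, where the inner mapping $g$ is single-valued and (twice) differentiable. First I would recall the construction preceding Theorem \ref{The : ChainRuleGder}: with $S_1 := g$, $S_2 := N_D$, and $S := N_D \circ g$, the intermediate map $\Xi(x,\lambda') = g^{-1}$-fibre intersected with $S_2^{-1}(\lambda')$ degenerates, since $g$ is single-valued, to the (single- or empty-valued) map equal to $\{x\}$ precisely when $\lambda' \in N_D(g(x))$. The map $F$ in the statement is exactly the map whose metric subregularity is the pre-image qualification condition in that theorem (written with $S_1(x')-y$ in place of $\gph S_1 - (x',y)$, which is the innocuous reformulation already noted in Corollary \ref{Cor:S_1_single}).

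For the forward (trivial) implication $\eta \in D(N_D\circ g)(x,\lambda)(u) \Rightarrow \eta \in DN_D(g(x),\lambda)(\nabla g(x)u)$, I would simply invoke the always-valid direction of the chain rule — this is the image-set/pre-image-set argument at the start of the proof of Theorem \ref{The : ChainRuleGder}, or equivalently \cite[Proposition 6.41, Theorem 6.31]{RoWe98} — which needs no assumptions. For the reverse implication, under metric subregularity of $F$ at $((x,\lambda),(0,0))$ in direction $(u,\eta)$, the plan is to run the directional version of the argument in Corollary \ref{Cor:S_1_single}: differentiability of $g$ gives the tangent-cone product relation \eqref{eq: TangProdRel} (with $v = \nabla g(x)u$), and then subregularity of $F$ — equivalently calmness of $F^{-1}$ — combined with \cite[Proposition 1]{HenOut05} applied directionally produces the needed tangent vector to $\gph \Xi$, hence $\eta \in D(N_D\circ g)(x,\lambda)(u)$. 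Here one must be slightly careful that subregularity is only assumed \emph{in the direction} $(u,\eta)$, so the Henrion--Outrata argument should be applied to the specific sequence realizing the tangent direction rather than to a full neighbourhood; this is routine but is the one place where directional bookkeeping matters.

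For the coderivative estimate, the approach is to use the directional coderivative chain rule that underlies Theorem \ref{The : ProdRule} — more precisely the inner-calm* / subregularity-based chain rule for directional limiting coderivatives obtained by mimicking \cite[Theorem 5.1]{BeGfrOut18a} via Theorem \ref{The : ConstSetForw}(ii), but specialized to the case where the outer composition involves the smooth single-valued $g$. Since $g$ is single-valued and smooth, the intermediate map $\Xi$ is inner calm (trivially, being single-valued and, by the subregularity hypothesis, locally well-behaved), so the union over $v \in D\Xi(\cdot)(u,\eta)$ collapses and one is left with $D^*(N_D\circ g)((x,\lambda);(u,\eta))(\xi^*) \subset \nabla g(x)^T D^*N_D((g(x),\lambda);(\nabla g(x)u,\eta))(\xi^*)$, exactly as in the chain rule for coderivatives of a composition with a smooth map (cf. \cite[Theorem 6.14]{RoWe98} in the non-directional case), with the subregularity of $F$ serving as the qualification condition that makes the estimate hold directionally.

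The main obstacle I anticipate is not any single hard computation but rather the correct \emph{directional} transfer of metric subregularity of $F$ into the calmness-type property of $F^{-1}$ restricted to the relevant direction, and ensuring the directional coderivative chain rule from \cite{BeGfrOut18a} applies verbatim once $\Xi$ is recognized as inner calm. In particular, one should check that the hypothesis ``$F$ metrically subregular at $((x,\lambda),(0,0))$ in direction $(u,\eta)$'' is precisely what feeds into the directional analogue of \cite[Proposition 1]{HenOut05} and of \cite[Corollary 5.2]{BeGfrOut18a}; modulo this identification, both implications and the coderivative inclusion follow from results already established in the excerpt.
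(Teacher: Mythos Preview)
Your approach via the chain-rule machinery of Section~4 is workable, but the paper takes a shorter and more direct route. Instead of treating $N_D\circ g$ as a composition $S_2\circ S_1$ and invoking Theorem~\ref{The : ChainRuleGder}/Corollary~\ref{Cor:S_1_single}, the paper simply observes that $\gph(N_D\circ g)=F^{-1}(0,0)$ and applies the directional version of \cite[Proposition~4.1]{GfrOut16} to this pre-image. That result immediately gives $(u,\eta)\in T_{\gph(N_D\circ g)}(x,\lambda)\Rightarrow(0,0)\in DF((x,\lambda),(0,0))(u,\eta)$, with the reverse implication and the directional coderivative bound following under directional metric subregularity of $F$; one then only has to compute $DF$ and the directional $D^*F$ via the sum rule (Theorem~\ref{The : SumRuleGder} and \cite[Corollary~5.3]{BeGfrOut18a}), since $F$ is a smooth single-valued map minus a constant set-valued one. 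The advantage of the paper's route is that \cite[Proposition~4.1]{GfrOut16} already carries the directional content, so no extension of \cite[Proposition~1]{HenOut05} or of Corollary~\ref{Cor:S_1_single} to directional subregularity is needed. Your approach would go through with that extension, but note one inaccuracy: the map $F$ in the lemma has two arguments $(x',\lambda')$, whereas the subregularity map in Corollary~\ref{Cor:S_1_single} has three, $(x',\lambda',y)$; they are not literally the same map, and reducing one subregularity condition to the other---while true, using Lipschitz continuity of $g$---is not the ``innocuous reformulation'' recorded in that corollary, which merely replaces $\gph S_1-(x,y)$ by $S_1(x)-y$ without eliminating the intermediate variable $y$.
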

\begin{proof}
Since $\gph (N_D \circ g) = F^{-1}(0,0)$,
\cite[Proposition 4.1]{GfrOut16} yields that
\[\eta \in D (N_D \circ g)(x,\lambda)(u) \ \Longrightarrow \ 
(0,0) \in DF((x,\lambda),(0,0))(u,\eta)\]
always holds and that the reverse implication, as well as
\[
D^{*}(N_D \circ g)((x,\lambda);(u,\eta))(\xi^*) \ \subset \
\{\xi \mv \exists (a,b): (\xi,-\xi^*) \in D^{*}F(((x,\lambda),(0,0));((u,\eta),0,0))(a,b)\},
\]
hold if $F$ is metrically subregular
 at $\big((x,\lambda),(0,0)\big)$ in direction $(u,\eta)$.
The claims now follow from the estimates of derivatives of $F$.
Indeed, employing the sum rule from Theorem \ref{The : SumRuleGder}
and \cite[Corollary 5.3]{BeGfrOut18a}, respectively, yields
\[(0,0) \in DF((x,\lambda),(0,0))(u,\eta) = (\nabla g(x)u,\eta) - T_{\gph N_D}(g(x),\lambda) \ \iff \
\eta \in D N_D(g(x),\lambda)(\nabla g(x)u)
\]
and
\begin{eqnarray*}
\lefteqn{(\xi,-\xi^*) \in D^{*}F(((x,\lambda),(0,0));((u,\eta),0,0))(a,b)} \\
& \hspace{1.5cm} \Longrightarrow &
\xi \in \nabla g(x)^T D^{*}N_D((g(x),\lambda);(\nabla g(x) u,\eta))(\xi^*). \hspace{4cm}
\end{eqnarray*}
Here, we used the simple formulas for derivatives of
the constant mapping $(x^\prime,\lambda^\prime) \tto \gph N_D$
with the graph $\R^{n+s} \times \gph N_D$,
see \cite[Lemma 6.1]{BeGfrOut18a} for more details.
\end{proof}

Now, we can proceed with the estimates for the normal cone mapping.

\begin{theorem}\label{Pro: AuxResNonPar}
 Given $x^* \in N_{\Gamma}(x)$
 for $x$ near $\xb$, for all $u \in \R^n$ we have
 \begin{eqnarray*}
 DN_{\Gamma}(x,x^*)(u) & \subset &
 \bigcup\limits_{\lambda \in \Lambda(x,x^*)}
 \nabla^2 \skalp{\lambda,g}(x)u
 + \nabla g(x)^T N_{\K_D(g(x),\lambda)}(\nabla g(x)u) \\
 & = &
 \bigcup\limits_{\lambda \in \Lambda(x,x^*)}
 \nabla^2 \skalp{\lambda,g}(x)u + 
 N_{\K_{\Gamma}(x,x^*)}(u),
\end{eqnarray*}
where
\[\Lambda(x,x^*)=\{\lambda \in N_{D}(g(x)) \mv \nabla g(x)^T \lambda = x^*\}.\]
The opposite inclusion holds if $F$ is metrically subregular
 at $\big((x,\lambda),(0,0)\big)$ for every $\lambda \in \Lambda(x,x^*)$ in direction $(u,\eta)$ for every
 $\eta \in N_{\K_D(g(x),\lambda)}(\nabla g(x)u)$.
 This is true, in particular, if the following non-degeneracy condition in direction $u$
 \begin{equation}\label{eq:NonDegApp}
 \nabla g(x)^T \mu = 0, \ \mu \in \Span N_{T_D(g(x))}(\nabla g(x) u)
 \ \Longrightarrow \ \mu = 0
 \end{equation}
 holds. Then, however, for all $w^* \in \R^n$ and $u^* \in DN_{\Gamma}(x,x^*)(u)$,
 we also get
 \begin{eqnarray*}
 \lefteqn{D^{*}N_{\Gamma}((x,x^*); (u,u^*))(w^*)} \\
 & \quad \subset &
 \bigcup\limits_{\lambda \in \Lambda(x,x^*)} \
 \bigcup\limits_{\eta \in D \Lambda((x,x^*),\lambda)(u,u^*)}
  \nabla^2 \skalp{\lambda,g}(x) w^* +
  \nabla g(x)^T D^{*}N_D((g(x),\lambda);(\nabla g(x) u,\eta))(\nabla g(x) w^*).
\end{eqnarray*}
\end{theorem}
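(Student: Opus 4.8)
The plan is to read $N_\Gamma$, over the relevant neighbourhood of $\xb$, as a product of maps and to push everything through the product rule (Theorem~\ref{The : ProdRule}), combined with Theorem~\ref{the:Main_Inner_calmness} and Lemma~\ref{Lem:MapM}. Since $g(\cdot)-D$ is metrically subregular at $\xb$, it is so at every $x$ near $\xb$ and then $N_\Gamma(x)=\nabla g(x)^T N_D(g(x))$; hence $N_\Gamma=S_1\cdot S_2$ with the single-valued, continuously differentiable $S_1:=\nabla g$ (using $g\in C^2$) and the osc set-valued $S_2:=N_D\circ g$, and the intermediate map $\widetilde\Xi$ of Theorem~\ref{The : ProdRule} is precisely $\Lambda$, with $\dom\widetilde\Xi=\gph N_\Gamma$ locally. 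Throughout I would use $\nabla\skalp{\lambda,S_1}(x)=\nabla^2\skalp{\lambda,g}(x)$ and $S_1(x)^T=\nabla g(x)^T$.

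For the first inclusion I would apply Theorem~\ref{the:Main_Inner_calmness}(ii) (with $\varphi=g$, $\beta=\nabla g$, $Q=D$ convex polyhedral) to get fuzzy inner calmness* of $\Lambda=\widetilde\Xi$ at $(x,x^*)$ wrt its domain, so that the graphical-derivative part of Theorem~\ref{The : ProdRule} yields the exact identity $DN_\Gamma(x,x^*)(u)=\bigcup_{\lambda\in\Lambda(x,x^*)}\nabla^2\skalp{\lambda,g}(x)u+\nabla g(x)^T D(N_D\circ g)(x,\lambda)(u)$. Lemma~\ref{Lem:MapM}, whose proof via the reduction lemma (Lemma~\ref{lemma:Red_lemma_ext}) also gives $DN_D(g(x),\lambda)(v)=N_{\K_D(g(x),\lambda)}(v)$, provides $D(N_D\circ g)(x,\lambda)(u)\subset N_{\K_D(g(x),\lambda)}(\nabla g(x)u)$, which is the claimed upper estimate; conversely, the reverse implication in Lemma~\ref{Lem:MapM} gives $N_{\K_D(g(x),\lambda)}(\nabla g(x)u)\subset D(N_D\circ g)(x,\lambda)(u)$ exactly when $F$ is metrically subregular at $\big((x,\lambda),(0,0)\big)$ in direction $(u,\eta)$ for every such $\eta$, whence equality in that case.

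Next I would justify the displayed equality between $\nabla g(x)^T N_{\K_D(g(x),\lambda)}(\nabla g(x)u)$ and $N_{\K_\Gamma(x,x^*)}(u)$. Metric subregularity of $g(\cdot)-D$ at $x$ gives $T_\Gamma(x)=\{u' \mv \nabla g(x)u'\in T_D(g(x))\}$; combined with $\skalp{x^*,u'}=\skalp{\lambda,\nabla g(x)u'}\leq 0$ on $T_\Gamma(x)$ for any $\lambda\in\Lambda(x,x^*)$, this yields $\K_\Gamma(x,x^*)=(\nabla g(x))^{-1}\big(\K_D(g(x),\lambda)\big)$ for every such $\lambda$. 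Since for a linear map and a polyhedral set the associated constraint map is metrically subregular everywhere (polyhedrality, cf.\ Theorem~\ref{the:2-sidedCalmnessPolyhedralMaps}), the pre-image rule \eqref{eq:pre-image} gives $N_{\K_\Gamma(x,x^*)}(u)\subset\nabla g(x)^T N_{\K_D(g(x),\lambda)}(\nabla g(x)u)$, while the reverse inclusion $\nabla g(x)^T N_{\K_D(g(x),\lambda)}(\nabla g(x)u)\subset N_{\K_\Gamma(x,x^*)}(u)$ always holds by convexity of the cones; this yields the equality term by term, hence as unions.

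Finally, for the directional coderivative I would invoke Theorem~\ref{the:Main_Inner_calmness}(iii): the directional non-degeneracy \eqref{eq:NonDegApp} is exactly \eqref{eq:DirNondeg} for $\varphi=g$, $\beta=\nabla g$, $Q=D$, so $\Lambda=\widetilde\Xi$ is inner calm* at $(x,x^*)$ wrt its domain in direction $(u,u^*)$ (after reducing to $(u,u^*)\in\Sp$, which is harmless by scale invariance, the case $(u,u^*)=(0,0)$ being trivial). The directional-coderivative part of Theorem~\ref{The : ProdRule} then gives $D^*N_\Gamma((x,x^*);(u,u^*))(w^*)\subset\bigcup_{\lambda}\bigcup_{\eta\in D\Lambda((x,x^*),\lambda)(u,u^*)}\nabla^2\skalp{\lambda,g}(x)w^*+D^*(N_D\circ g)((x,\lambda);(u,\eta))(\nabla g(x)w^*)$, using symmetry of the Hessian $\nabla^2\skalp{\lambda,g}(x)$; substituting the last estimate of Lemma~\ref{Lem:MapM} for $D^*(N_D\circ g)$ completes the claim. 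The hard part will be showing that the directional non-degeneracy \eqref{eq:NonDegApp} forces the metric subregularity of $F$ at $\big((x,\lambda),(0,0)\big)$ in every direction $(u,\eta)$ used above: this is where one genuinely needs that $\nabla g(x')^T$ is injective on $\Span N_D(g(x'))$ along the direction $u$, equivalently the bound $\norm{\mu}\leq\alpha\norm{\nabla g(x')^T\mu}$ for $\mu\in\Span N_D(g(x'))$ and $x'$ near $x$ from $u$, which follows from \eqref{eq:NonDegApp} via \cite[Proposition 2.14]{BeGfrOut18}; the remainder is bookkeeping with the already established calculus rules, following the pattern of \cite[Theorem 5.3]{BeGfrOut18}.
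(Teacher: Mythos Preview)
Your proposal is correct and follows essentially the same route as the paper's proof: write $N_\Gamma$ as the product $\nabla g(\cdot)^T\cdot(N_D\circ g)$, identify $\widetilde\Xi=\Lambda$, invoke Theorem~\ref{the:Main_Inner_calmness}(ii)/(iii) for (fuzzy) inner calmness*, apply the product rule Theorem~\ref{The : ProdRule}, and then pass through the composition via Lemma~\ref{Lem:MapM} and the reduction lemma; the identification $\K_\Gamma(x,x^*)=\nabla g(x)^{-1}\K_D(g(x),\lambda)$ together with polyhedral subregularity is exactly what the paper does. The only noteworthy difference is the step you single out as ``the hard part'' (non-degeneracy in direction $u$ $\Rightarrow$ directional metric subregularity of $F$): the paper does not rederive this but simply cites \cite[Theorem 2.11]{BeGfrOut18}, so no additional argument via \cite[Proposition 2.14]{BeGfrOut18} or the pattern of \cite[Theorem 5.3]{BeGfrOut18} is needed.
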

\begin{proof}
 Theorem \ref{the:Main_Inner_calmness} (ii) yields that the multiplier mapping
$\Lambda(x,x^*)$
is inner calm* at $(x,x^*)$ wrt its domain in the fuzzy sense.
Noting that $\Lambda$ is precisely the ``intermediate'' mapping 
$\widetilde \Xi$ from \eqref{eq:InterMapPrdRule}
appearing in the product rule from Theorem \ref{The : ProdRule},
we obtain
\begin{equation}\label{eq:aux_estim}
 DN_{\Gamma}(x,x^*)(u) =
 \bigcup\limits_{\lambda \in \Lambda(x,x^*)}
 \nabla^2 \skalp{\lambda,g}(x)u + \nabla g(x)^T D(N_D \circ g)(x,\lambda)(u).
\end{equation}
Next, Lemma \ref{Lem:MapM} gives
\begin{equation}\label{eq:RedLemApp}
    D(N_D \circ g)(x,\lambda)(u) \subset DN_D(g(x),\lambda)(\nabla g(x)u)
    = N_{\K_D(g(x),\lambda)}(\nabla g(x)u),
\end{equation}
where the equality corresponds to an equivalent
reformulation of the reduction lemma \eqref{eq:Reduction_Lemma}.
Moreover, for every $\lambda \in \Lambda(x,x^*)$ it holds that
\[\K_{\Gamma}(x,x^*) = \nabla g(x)^{-1} \K_D(g(x),\lambda).\]
Since $u \tto \nabla g(x)u - \K_D(g(x),\lambda)$ is a polyhedral map
and hence metrically subregular by the Robinson's results \cite[Proposition 1]{Rob81},
we obtain 
\[N_{\K_{\Gamma}(x,x^*)}(u) = \nabla g(x)^T N_{\K_D(g(x),\lambda)}(\nabla g(x)u)\]
and the first claim follows.

The opposite inclusion follows from the opposite
inclusion in \eqref{eq:RedLemApp},
which holds true under the subregularity assumption by Lemma \ref{Lem:MapM}.

The claim that the non-degeneracy condition implies the
subregularity assumption was proven in \cite[Theorem 2.11]{BeGfrOut18}.
Moreover, from Theorem \ref{the:Main_Inner_calmness} (iii)
we get that the non-degeneracy also implies the inner calmness*
of $\Lambda$ at $(x,x^*)$ in direction $(u,u^*)$ wrt its domain.
Theorem \ref{The : ProdRule} thus gives the estimate
\begin{eqnarray*}
 \lefteqn{D^{*}N_{\Gamma}((x,x^*); (u,u^*))(w^*)} \\
 & \hspace{1.5cm} \subset &
 \bigcup\limits_{\lambda \in \Lambda(x,x^*)} \
 \bigcup\limits_{\eta \in D \Lambda((x,x^*),\lambda)(u,u^*)}
  \nabla^2 \skalp{\lambda,g}(x) w^* +
   D^{*}(N_D \circ g)((x, \lambda);(u,\eta)) (\nabla g(x) w^*)
\end{eqnarray*}
and using Lemma \ref{Lem:MapM} again completes the proof. 
\end{proof}

Note that the auxiliary formula
\eqref{eq:aux_estim}
holds without additional assumptions.
Moreover, Theorem \ref{the:Main_Inner_calmness} (i) yields
that the analogous estimate for the standard
limiting coderivative holds for any closed set $D$,
which is Clarke regular near $g(x)$, in particular for any closed convex set $D$.
Finally, note also that the formula for the graphical derivative
reveals that the reduction lemma for polyhedral set $D$,
see \eqref{eq:RedLemApp}, is carried over to set $\Gamma$
with the additional curvature term $\cup_{\lambda \in \Lambda(x,x^*)}
 \nabla^2 \skalp{\lambda,g}(x)u$.

As a specific application we derive the following result
regarding the semismoothness* of $N_{\Gamma}$.
\begin{corollary}\label{Pro:semismoothness}
 If the non-degeneracy condition \eqref{eq:NonDegApp} is satisfied
 for some $x,u \in \R^n$,
 then the normal cone mapping $x^\prime \tto N_{\Gamma}(x^\prime)$ is semismooth* at $(x,x^*)$ in direction $(u,u^*)$ for every $x^*, u^* \in \R^n$,
 i.e.,
 \begin{equation*}
  \skalp{u,w} = \skalp{u^*,w^*} \quad \forall \, (w^*,w) \in \gph D^{*}N_{\Gamma}((x,x^*); (u,u^*)).
 \end{equation*}
\end{corollary}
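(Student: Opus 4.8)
The plan is to take an arbitrary pair $(w^*,w)\in\gph D^{*}N_{\Gamma}((x,x^*);(u,u^*))$, plug it into the directional coderivative estimate of Theorem~\ref{Pro: AuxResNonPar} (which is available precisely because the directional non-degeneracy \eqref{eq:NonDegApp} is assumed), and then verify the orthogonality relation $\skalp{u,w}=\skalp{u^*,w^*}$ by elementary algebra, the only substantial ingredient being that $N_D$ is itself semismooth* because $D$ is polyhedral.

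First I would extract the two representations hidden in the hypotheses. From $w\in D^{*}N_{\Gamma}((x,x^*);(u,u^*))(w^*)$ and the estimate in Theorem~\ref{Pro: AuxResNonPar} there exist $\lambda\in\Lambda(x,x^*)$, a vector $\eta\in D\Lambda((x,x^*),\lambda)(u,u^*)$, and $\xi^*\in D^{*}N_D((g(x),\lambda);(\nabla g(x)u,\eta))(\nabla g(x)w^*)$ with
\[ w=\nabla^2\skalp{\lambda,g}(x)w^*+\nabla g(x)^T\xi^*. \]
On the other hand, spelling out $\eta\in D\Lambda((x,x^*),\lambda)(u,u^*)$ by sequences $t_k\downarrow0$, $(u_k,u_k^*)\to(u,u^*)$, $\eta_k\to\eta$ with $(x+t_ku_k,x^*+t_ku_k^*,\lambda+t_k\eta_k)\in\gph\Lambda$, the defining identity $\nabla g(x+t_ku_k)^T(\lambda+t_k\eta_k)=x^*+t_ku_k^*$ of $\gph\Lambda$, combined with $\nabla g(x)^T\lambda=x^*$ and a first-order Taylor expansion of $x'\mapsto\nabla g(x')^T\lambda$, yields after dividing by $t_k$ and letting $k\to\infty$ that
\[ u^*=\nabla^2\skalp{\lambda,g}(x)u+\nabla g(x)^T\eta \]
(this is the same computation as in the proof of Theorem~\ref{the:Main_Inner_calmness}). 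The same expansion of $g(x+t_ku_k)$ also shows $(\nabla g(x)u,\eta)\in T_{\gph N_D}(g(x),\lambda)$, so the direction used in the coderivative of $N_D$ above is a legitimate tangent direction.

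Next I would use that, $D$ being convex polyhedral, $\gph N_D$ is a polyhedral set and hence $N_D$ is semismooth* at $(g(x),\lambda)$; see \cite{GfrOut19} (equivalently, this follows from Lemma~\ref{lemma:Red_lemma_ext} together with the elementary fact that the directional limiting normal cone to a polyhedral set at one of its points is orthogonal to the defining direction). Applied to $\xi^*\in D^{*}N_D((g(x),\lambda);(\nabla g(x)u,\eta))(\nabla g(x)w^*)$, semismoothness* of $N_D$ gives
\[ \skalp{\nabla g(x)u,\xi^*}=\skalp{\eta,\nabla g(x)w^*}=\skalp{\nabla g(x)^T\eta,w^*}. \]
Combining this with the representation of $w$ and using the symmetry of the Hessian $\nabla^2\skalp{\lambda,g}(x)$,
\[ \skalp{u,w}=\skalp{\nabla^2\skalp{\lambda,g}(x)u,w^*}+\skalp{\nabla g(x)u,\xi^*}=\skalp{\nabla^2\skalp{\lambda,g}(x)u+\nabla g(x)^T\eta,w^*}=\skalp{u^*,w^*}, \]
where the last step invokes the formula for $u^*$ derived above. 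As $(w^*,w)$ was arbitrary, this is exactly the asserted semismoothness* of $N_{\Gamma}$ at $(x,x^*)$ in direction $(u,u^*)$.

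The hard part is really just the middle step: correctly invoking semismoothness* of $N_D$ in the \emph{directional} sense and confirming that the direction $(\nabla g(x)u,\eta)$ is admissible (the degenerate subcase $(\nabla g(x)u,\eta)=(0,0)$ is harmless, since then both $\skalp{\nabla g(x)u,\xi^*}$ and $\skalp{\nabla g(x)^T\eta,w^*}$ vanish). Everything else — the two Taylor-type identities and the final chain of scalar-product manipulations via the adjoint of $\nabla g(x)$ and the symmetry of $\nabla^2\skalp{\lambda,g}(x)$ — is routine bookkeeping.
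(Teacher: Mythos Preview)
Your proof is correct and follows essentially the same route as the paper: invoke the directional coderivative estimate from Theorem~\ref{Pro: AuxResNonPar} to represent $w$, unpack $\eta\in D\Lambda((x,x^*),\lambda)(u,u^*)$ to get the companion representation $u^*=\nabla^2\skalp{\lambda,g}(x)u+\nabla g(x)^T\eta$, apply semismoothness* of the polyhedral map $N_D$, and finish with the adjoint/Hessian-symmetry computation. The paper's proof is identical in structure; it merely states the two representations as a single consequence of Theorem~\ref{Pro: AuxResNonPar} without spelling out the Taylor argument for $u^*$ that you include.
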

\begin{proof}
 Clearly, if $x^* \notin N_{\Gamma}(x)$ or $u^* \notin DN_{\Gamma}(x,x^*)(u)$, then
 $D^{*}N_{\Gamma}((x,x^*); (u,u^*))=\emptyset$ and there is nothing to prove. Hence, let $u^* \in DN_{\Gamma}(x,x^*)(u)$
 and consider $w \in D^{*}N_{\Gamma}((x,x^*); (u,u^*))(w^*)$.
 Theorem \ref{Pro: AuxResNonPar} yields the existence
 of $\lambda \in \Lambda(x,x^*)$,
 $\eta \in DN_D(g(x),\lambda)(\nabla g(x)u)$ and
 \[\zeta \in D^{*}N_D((g(x),\lambda);(\nabla g(x) u,\eta))(\nabla g(x) w^*)\]
 such that
 \[u^* = \nabla^2 \skalp{\lambda,g}(x) u + \nabla g(x)^T \eta
 \ \textrm{ and } \
 w = \nabla^2 \skalp{\lambda,g}(x) w^* +
  \nabla g(x)^T \zeta.\]
 Since the polyhedral map $(y,\lambda) \tto N_D(y,\lambda)$ is semismooth* at every point of its graph, see \cite[page 7]{GfrOut19},
 we obtain
 \begin{eqnarray*}
 \skalp{u,w} & = &
 \skalp{u,\nabla^2 \skalp{\lambda,g}(x) w^*} +
 \skalp{u,\nabla g(x)^T \zeta} =
 \skalp{\nabla^2 \skalp{\lambda,g}(x) u, w^*} + 
 \skalp{\nabla g(x) u, \zeta} \\
 & = &
 \skalp{\nabla^2 \skalp{\lambda,g}(x) u, w^*} + 
 \skalp{\eta, \nabla g(x) w^*} =
 \skalp{\nabla^2 \skalp{\lambda,g}(x) u + \nabla g(x)^T \eta,w^*} = \skalp{u^*,w^*}.
 \end{eqnarray*}
\end{proof}

\begin{remark}
 In order to show the semismoothness* of the normal come mapping,
 we need to show the semismoothness* in every direction $(u,u^*)$.
 Clearly, the zero direction $(u,u^*)=(0,0)$ causes no problem.
 In order to deal with direction $(0,u^*) \neq (0,0)$, however,
 we require the non-degeneracy condition to hold for the zero
 direction $u=0$, which is equivalent to the standard non-degeneracy,
 see Remark \ref{Rem:Main}.
 This, in turn, quite significantly simplifies the situation,
 in particular, it implies uniqueness of $\lambda$ and $\eta$,
 see \cite{GfrOut18}.
 This can be avoided if we assume that $u^* \in DN_{\Gamma}(x,x^*)(0)$
 implies $u^* = 0$, which is equivalent with the isolated calmness
 of the normal cone mapping at $(x,x^*)$.
 In such case we only need the non-degeneracy in all non-zero
 directions, which is strictly milder than the standard non-degeneracy
 as demonstrated in Example \ref{Ex}.
\end{remark}

\subsection{Constraints depending on the parameter and the solution}

Given again a convex polyhedral set $D \subset \R^s$
and a twice continuously differentiable function
$g: \R^l \times \R^n \times \R^n \to \R^s$,
consider the feasible set
\begin{equation}
 \Gamma(p,x):=\{z \in \R^n \mv g(p,x,z) \in D\}
\end{equation}
depending on the parameter $p$ as well as on the decision
variable $x$.
Here, given a reference point $(\pb,\xb)$ with $g(\pb,\xb,\xb) \in D$,
we require the existence of $\kappa>0$ such that for all $(p,x,z)$ belonging to a neighbourhood of $(\pb,\xb,\xb)$ the metric inequality
  \begin{equation}\label{eq : IneqMain}
   \dist{z,\Gamma(p,x)} \leq \kappa \dist{g(p,x,z),D}
  \end{equation}
holds. This yields, in particular,
that for all $(p,x,z)\in\gph\Gamma$ sufficiently close to $(\pb,\xb,\xb)$ the mapping $g(p,x,\cdot)-D$ is metrically subregular at $(z,0)$ with modulus $\kappa$. Hence, the normal cone mapping
$(p,x) \tto N_{\Gamma(p,x)}(x) =:\mathcal{N}^{\Gamma}(p,x)$
can be again written as the product
\[\mathcal{N}^{\Gamma}(y) = \beta(y)^T N_D(\tilde g(y))\]
for $\tilde g(p,x)=g(p,x,x)$, $\beta(p,x)=\nabla_3 g(p,x,x)$ and $y=(p,x)$.
Given $x^* \in \mathcal{N}^{\Gamma}(y)$, we denote
\[\Lambda(y,x^*) =
\{\lambda \in N_D(\tilde g(y)) \mv \beta(y)^T \lambda = x^*\}.
\]

Since the modulus $\kappa$ of subregularity of $g(p,x,\cdot)-D$
does not depend of $p$ and $x$, we infer the existence of
$\lambda \in \Lambda(y,x^*)$ with $\norm{\lambda} \leq \kappa \norm{x^*}$
and the proof of Corollary \ref{cor:inner_semicompactness}
yields inner semicompactness of $\Lambda$ at $(y,x^*)$ wrt its domain. 
Moreover, Theorem \ref{the:Main_Inner_calmness} and
Remark \ref{Rem:Main} (1.) then imply even inner calmness*
of $\Lambda$ in the fuzzy sense.
The same arguments as in the previous case
thus yield the following estimates.
\begin{theorem}
For every $v \in\R^l\times\R^n$ we have
\begin{equation*}
 D\mathcal{N}^{\Gamma}(y,x^*)(v) \subset \bigcup\limits_{\lambda \in \Lambda(y,x^*)}
 \nabla \skalp{\lambda,\beta}(y)v + \beta(y)^T N_{\K_{D}(\tilde g(y),\lambda)}(\nabla \tilde g(y) v). 
\end{equation*}
On the other hand, given $v \in\R^l\times\R^n$, $\lambda \in \Lambda(y,x^*)$
  and $\eta \in N_{\K_{D}(\tilde  g(y),\lambda)}(\nabla \tilde  g(y)(v))$,
  we have
  \begin{equation*} \label{EqInclDG1}
     \nabla \skalp{\lambda,\beta}(y)v + \beta(y)^T\eta \in D\mathcal{N}^{\Gamma}(y,x^*)(v),
  \end{equation*}
  provided the mapping
  $(y^\prime,\lambda^\prime) \tto
  \big(\tilde g(y^\prime), \lambda^\prime \big)-\gph N_D$
  is metrically subregular at $\big((y,\lambda),(0,0)\big)$ in direction $(v,\eta)$, in particular, if
  \begin{equation}\label{eq:NonDegAppParamSol_ms}
  \nabla \tilde g(y)^T \mu = 0, \ \mu \in \Span N_{T_D(\tilde g(y))}(\nabla \tilde g(y) v)
 \ \Longrightarrow \ \mu = 0.
 \end{equation}
 Moreover, if also
 \begin{equation}\label{eq:NonDegAppParamSol}
 \beta(y)^T \mu = 0, \ \mu \in \Span N_{T_D(\tilde g(y))}(\nabla \tilde g(y) v)
 \ \Longrightarrow \ \mu = 0
 \end{equation}
 holds, then for $u^* \in D\mathcal{N}^{\Gamma}(y,x^*)(v)$
  and arbitrary $w^* \in \R^n$ we get the estimate
 \begin{eqnarray*}
  \lefteqn{D^{*}\mathcal{N}^{\Gamma}((y,x^*); (v,u^*))(w^*)} \\
  & \subset &
  \bigcup\limits_{\lambda \in \Lambda(y,x^*)} \
 \bigcup\limits_{\eta \in D \Lambda((y,x^*),\lambda)(v,u^*)}
  \nabla \skalp{\lambda,\beta}(y)^T w^* + \nabla \tilde g(y)^T
   D^{*}N_D((\tilde g(y), \lambda);(\nabla \tilde g(y)v,\eta)) (\beta(y) w^*).
 \end{eqnarray*}
\end{theorem}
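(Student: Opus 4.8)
The plan is to mimic the proof of Theorem~\ref{Pro: AuxResNonPar}, now taking $\tilde g$ and $\beta$ in place of $g$ and $\nabla g$. As recorded in the paragraph preceding the statement, \eqref{eq : IneqMain} together with the argument of Corollary~\ref{cor:inner_semicompactness} produces $\lambda \in \Lambda(y,x^*)$ with $\norm{\lambda} \leq \kappa\norm{x^*}$ and hence inner semicompactness of $\Lambda$ at $(y,x^*)$ wrt $\dom\Lambda$; since $D$ is convex polyhedral, Theorem~\ref{the:Main_Inner_calmness}(ii) combined with Remark~\ref{Rem:Main}(1.) — which legitimizes using that theorem when $\beta \neq \nabla\tilde g$ — upgrades this to inner calmness* of $\Lambda$ in the fuzzy sense wrt $\dom\Lambda$. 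The key structural observation is that $\Lambda$ is exactly the intermediate map $\widetilde\Xi$ from \eqref{eq:InterMapPrdRule} attached to the product $\mathcal{N}^{\Gamma}(y) = \beta(y)^T (N_D\circ\tilde g)(y)$ of the single-valued differentiable $\beta$ and the set-valued $N_D\circ\tilde g$.

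For the graphical derivative I would first apply the product rule (Theorem~\ref{The : ProdRule}) to obtain, exactly as in \eqref{eq:aux_estim},
\[
 D\mathcal{N}^{\Gamma}(y,x^*)(v) = \bigcup_{\lambda\in\Lambda(y,x^*)} \nabla\skalp{\lambda,\beta}(y)v + \beta(y)^T D(N_D\circ\tilde g)(y,\lambda)(v),
\]
and then invoke Lemma~\ref{Lem:MapM} verbatim with $\tilde g$ in place of $g$ (nothing in its proof depends on the domain dimension), giving $D(N_D\circ\tilde g)(y,\lambda)(v) \subset DN_D(\tilde g(y),\lambda)(\nabla\tilde g(y)v) = N_{\K_D(\tilde g(y),\lambda)}(\nabla\tilde g(y)v)$, the equality being the reduction-lemma reformulation of \eqref{eq:Reduction_Lemma}; this yields the first estimate. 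For the reverse inclusion I would use the opposite inclusion furnished by Lemma~\ref{Lem:MapM} under metric subregularity of $(y^\prime,\lambda^\prime)\tto(\tilde g(y^\prime),\lambda^\prime)-\gph N_D$ at $((y,\lambda),(0,0))$ in direction $(v,\eta)$, and note that \eqref{eq:NonDegAppParamSol_ms} forces that subregularity by \cite[Theorem 2.11]{BeGfrOut18}, just as in the non-parametrized case.

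For the coderivative estimate I would additionally assume \eqref{eq:NonDegAppParamSol}, which is precisely condition \eqref{eq:DirNondeg} of Theorem~\ref{the:Main_Inner_calmness}(iii) written for $\beta$ rather than for a Jacobian and hence, by Remark~\ref{Rem:Main}(1.), still delivers inner calmness* of $\Lambda$ at $(y,x^*)$ in direction $(v,u^*)$ wrt $\dom\Lambda$. The directional-coderivative part of Theorem~\ref{The : ProdRule} then gives
\[
 D^{*}\mathcal{N}^{\Gamma}((y,x^*);(v,u^*))(w^*) \subset \bigcup_{\lambda\in\Lambda(y,x^*)}\ \bigcup_{\eta\in D\Lambda((y,x^*),\lambda)(v,u^*)} \nabla\skalp{\lambda,\beta}(y)^T w^* + D^{*}(N_D\circ\tilde g)((y,\lambda);(v,\eta))(\beta(y)w^*),
\]
and a final application of the coderivative part of Lemma~\ref{Lem:MapM}, with $\xi^* := \beta(y)w^*$, replaces $D^{*}(N_D\circ\tilde g)((y,\lambda);(v,\eta))$ by $\nabla\tilde g(y)^T D^{*}N_D((\tilde g(y),\lambda);(\nabla\tilde g(y)v,\eta))$, which is the asserted estimate. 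The one point requiring care — and the only real difference from Theorem~\ref{Pro: AuxResNonPar} — is the bookkeeping forced by $\tilde g\neq\beta$: the subregularity of the composition map $N_D\circ\tilde g$ is controlled by $\tilde g$, whence \eqref{eq:NonDegAppParamSol_ms}, while the (directional) inner calmness* of the multiplier map $\Lambda$ is controlled by $\beta$, whence \eqref{eq:NonDegAppParamSol}, and one must go through Remark~\ref{Rem:Main}(1.) in order to apply Theorem~\ref{the:Main_Inner_calmness} with $\beta$ in the place of $\nabla\tilde g$.
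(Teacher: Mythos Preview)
Your proposal is correct and matches the paper's approach exactly: the paper's proof consists of the single sentence ``The same arguments as in the previous case thus yield the following estimates,'' after recording inner semicompactness (via Corollary~\ref{cor:inner_semicompactness}) and fuzzy inner calmness* (via Theorem~\ref{the:Main_Inner_calmness} and Remark~\ref{Rem:Main}(1.)) of $\Lambda$, which is precisely the route you spell out. Your explicit bookkeeping of why two distinct non-degeneracy conditions \eqref{eq:NonDegAppParamSol_ms} and \eqref{eq:NonDegAppParamSol} are needed --- one governing subregularity through $\nabla\tilde g$, the other governing directional inner calmness* through $\beta$ --- is the key point the paper also highlights in the discussion following the theorem.
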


We point out that here we need to impose the
two non-degeneracy conditions \eqref{eq:NonDegAppParamSol}
and \eqref{eq:NonDegAppParamSol_ms}.
In general, these two conditions are mutually incomparable,
as can be seen from
\[\nabla \tilde g(p,x) = (\nabla_1 g(p,x,x), \nabla_2 g(p,x,x) + \beta(p,x)).\]
If, however, $\nabla_2 g = 0$, in particular if
the feasible set $\Gamma$ depends only of $p$
but not on $x$, we get $\nabla \tilde g(p,x) = (\nabla_1 g(p,x,x), \beta(p,x))$
and, in this case, \eqref{eq:NonDegAppParamSol} implies
\eqref{eq:NonDegAppParamSol_ms}.
Naturally, these conditions become equivalent if also $\nabla_1 g = 0$,
which covers the previous case of non-parametrized constraints.

Let us add a few comments on existing results on the topic.
This model was investigated already in \cite{MoOut07} by using the
classical calculus of Mordukhovich.
Recently, in \cite{GfrOut18}, Gfrerer and Outrata also computed the derivatives
of the normal cone mapping $(p^\prime,x^\prime) \tto N_{\Gamma(p^\prime,x^\prime)}(x^\prime)$.
Instead of \eqref{eq : IneqMain} or \eqref{eq:NonDegAppParamSol},
however, they imposed the standard non-degeneracy condition \eqref{eq:NonDegAppParamSol} for $v=0$,
which guarantees uniqueness of the multipliers $\lambda$ and $\eta$.
The reason was that they were in fact relying on inner calmness
when using calculus rules, see, e.g., \cite[Lemmas 4.1 and 4.2]{GfrOut18}.

On the other hand, in \cite{BeGfrOut18}, we worked with the same
assumption \eqref{eq : IneqMain} and proved the same result
for the graphical derivative. The explicit estimation of the coderivative
was, however, bypassed by addressing the related stability issues
directly, see \cite[Theorem 6.1]{BeGfrOut18}.

Let us also mention the paper \cite{GfrMo17}
dealing with the constraints depending on $p$, but not on $x$.
This paper delivers stronger results
even in a more general setting that goes beyond polyhedrality.
In particular, the formula for the graphical derivative
is in fact valid without the additional subregularity assumption,
see \cite[Theorem 3.3]{GfrMo17}.
Naturally, the same applies to the non-parametrized setting.

We point out that in \cite{BeGfrOut18,GfrOut18} the formula
for the graphical derivative is derived from \cite[Theorem 5.3]{GfrMo17}.
Our calculus-based approach seems to be a bit simpler
and it is easily applicable regardless of whether
$\Gamma$ is a fixed set or it depends on $p$ and $x$.

We conclude this section by the corresponding semismoothness* result.
\begin{corollary}
 Under \eqref{eq:NonDegAppParamSol_ms} and \eqref{eq:NonDegAppParamSol},
 the normal cone mapping
 $(p^\prime,x^\prime) \tto N_{\Gamma(p^\prime,x^\prime)}(x^\prime)$
 is semismooth* at $(y,x^*)=(p,x,x^*)$ in direction $(v,u^*)=(q,u,u^*)$
 for every $x^*, u^* \in \R^n$.
\end{corollary}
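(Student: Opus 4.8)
The plan is to mirror the proof of Corollary~\ref{Pro:semismoothness}, replacing the non-parametric objects by their parametric counterparts and invoking the coderivative estimate of the preceding theorem in place of that of Theorem~\ref{Pro: AuxResNonPar}. If $x^* \notin \mathcal{N}^{\Gamma}(y)$ or $u^* \notin D\mathcal{N}^{\Gamma}(y,x^*)(v)$, then $D^{*}\mathcal{N}^{\Gamma}((y,x^*);(v,u^*)) = \emptyset$ and there is nothing to prove; so I would fix a pair $(w^*,w)$ with $w \in D^{*}\mathcal{N}^{\Gamma}((y,x^*);(v,u^*))(w^*)$ and show $\skalp{v,w}=\skalp{u^*,w^*}$.

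Next I would apply the coderivative estimate from the preceding theorem, which is available precisely because both \eqref{eq:NonDegAppParamSol_ms} and \eqref{eq:NonDegAppParamSol} are assumed: the former unlocks the required directional metric subregularity of $(y',\lambda') \tto (\tilde g(y'),\lambda') - \gph N_D$, while the latter yields, via Theorem~\ref{the:Main_Inner_calmness}(iii) together with Remark~\ref{Rem:Main}(1.), inner calmness* of $\Lambda$ at $(y,x^*)$ wrt $\dom \Lambda$ in direction $(v,u^*)$. This produces a multiplier $\lambda \in \Lambda(y,x^*)$, an element $\eta \in D\Lambda((y,x^*),\lambda)(v,u^*)$, and a vector $\zeta \in D^{*}N_D((\tilde g(y),\lambda);(\nabla \tilde g(y) v,\eta))(\beta(y) w^*)$ such that
\[
 u^* = \nabla \skalp{\lambda,\beta}(y) v + \beta(y)^T \eta
 \quad \textrm{and} \quad
 w = \nabla \skalp{\lambda,\beta}(y)^T w^* + \nabla \tilde g(y)^T \zeta .
\]
Note that the very non-emptiness of this directional coderivative of $N_D$ forces $(\nabla \tilde g(y) v,\eta) \in T_{\gph N_D}(\tilde g(y),\lambda)$, so that speaking of semismoothness* of $N_D$ at $(\tilde g(y),\lambda)$ in direction $(\nabla \tilde g(y) v,\eta)$ makes sense.

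Finally, since the polyhedral normal cone mapping $N_D$ is semismooth* at every point of its graph in every direction (see \cite[page 7]{GfrOut19}), applying this at $(\tilde g(y),\lambda)$ in direction $(\nabla \tilde g(y) v,\eta)$ to the pair $(\beta(y) w^*,\zeta) \in \gph D^{*}N_D((\tilde g(y),\lambda);(\nabla \tilde g(y) v,\eta))$ yields $\skalp{\nabla \tilde g(y) v,\zeta} = \skalp{\eta,\beta(y) w^*}$. Substituting the two displayed identities and using this equality, the short computation from the proof of Corollary~\ref{Pro:semismoothness} gives $\skalp{v,w} = \skalp{\nabla \skalp{\lambda,\beta}(y) v + \beta(y)^T \eta, w^*} = \skalp{u^*,w^*}$, which is the required semismoothness* identity. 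The only genuinely delicate point — exactly as in Corollary~\ref{Pro:semismoothness} — is verifying that the two non-degeneracy hypotheses legitimately unlock the coderivative estimate of the preceding theorem and that the direction fed into the coderivative of $N_D$ is an honest tangent direction, so that semismoothness* of $N_D$ in that direction may be invoked; everything else is routine bookkeeping with the product- and sum-rule formulas already established.
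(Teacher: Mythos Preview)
Your proposal is correct and follows essentially the same route as the paper's proof: invoke the coderivative estimate of the preceding parametric theorem (whose hypotheses are exactly \eqref{eq:NonDegAppParamSol_ms} and \eqref{eq:NonDegAppParamSol}) to extract $\lambda,\eta,\zeta$ with the two displayed identities, then use semismoothness* of the polyhedral map $N_D$ to swap $\skalp{\nabla \tilde g(y)v,\zeta}$ for $\skalp{\eta,\beta(y)w^*}$ in the inner-product computation. The paper's own proof is terser but structurally identical, and your added remarks on why both non-degeneracy conditions are needed and why the direction $(\nabla \tilde g(y)v,\eta)$ is a legitimate tangent direction are correct elaborations.
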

\begin{proof}
 The proof follows by the same arguments as the proof of
 Corollary \ref{Pro:semismoothness}.
 We only briefly clarify that the difference between $\nabla \tilde g$
 and $\beta$ causes no problem.
 Indeed, looking at the above theorem and the proof of Corollary \ref{Pro:semismoothness},
 we now get
 \[\zeta \in D^{*}N_D((\tilde g(y),\lambda);(\nabla \tilde g(y) v,\eta))(\beta(y) w^*)\]
 such that
 \[u^* = \nabla \skalp{\lambda,\beta}(y) v + \beta(y)^T \eta
 \ \textrm{ and } \
 w = \nabla \skalp{\lambda,\beta}(y)^T w^* + \nabla \tilde g(y)^T \zeta.\]
 The computation then goes as follows
 \begin{eqnarray*}
 \skalp{v,w} & = &
 \skalp{v,\nabla \skalp{\lambda,\beta}(y)^T w^*} +
 \skalp{v,\nabla \tilde g(y)^T \zeta} =
 \skalp{\nabla \skalp{\lambda,\beta}(y) v, w^*} + 
 \skalp{\nabla \tilde g(y) v, \zeta} \\
 & = &
 \skalp{\nabla \skalp{\lambda,\beta}(y) v, w^*} + 
 \skalp{\eta, \beta(y) w^*} =
 \skalp{\nabla \skalp{\lambda,\beta}(y) v + \beta(y)^T \eta,w^*} = \skalp{u^*,w^*}.
 \end{eqnarray*}
\end{proof}

\section*{Final Remarks}

Motivated by the recent success in the computation of the graphical derivative
of the normal cone mapping to sets with constraint structure,
in this paper, we identified the role played by (fuzzy) inner calmness*
and the underlying calculus principles.
We hope that it provides some helpful insights into the matter.
We also hope that the new notions will turn out useful also in other circumstances.
We believe that some reasonable topics for further study are readily available:
Can isolated calmness be effectively used as a sufficient condition for inner calmness?
Is fuzzy inner calmness* also necessary for validity of the estimate \eqref{eq:MainIssue}?
Can the new formulas for tangent cones yield useful estimates for regular normal cones
by polarization and, in turn, enrich the theory of stationarity conditions?
We plan to address these issues in the forthcoming paper.



\end{document}